\theoremstyle{plain}
\newtheorem{theorem}{Theorem}[section]
    \newtheorem*{thma}{Theorem A}
    \newtheorem*{thmb}{Theorem B}
    \newtheorem*{thmc}{Theorem C}
    \newtheorem*{propd}{Proposition D}
\newtheorem{proposition}[theorem]{Proposition}
\newtheorem{lemma}[theorem]{Lemma}
\newtheorem{corollary}[theorem]{Corollary}
\theoremstyle{definition}
\newtheorem*{definition}{Definition}
\newtheorem{example}[theorem]{Example}
    \newtheorem{notation}[theorem]{Notation}
\theoremstyle{remark}
\newtheorem{remark}[theorem]{Remark}
\newcommand{\w}[2][ ]{\ \ensuremath{#2}{#1}\ }
\newcommand{\ww}[2][ ]{\ \ensuremath{#2}{#1}}
\newcommand{\wwd}[1]{\ \ensuremath{#1}-}
\newcommand{\wh}{\ -- \ }
\newcommand{\wwh}{-- \ }
\newcommand{\wb}[2][ ]{\ (\ensuremath{#2}){#1}\ }
\newcommand{\wwb}[1]{\ (\ensuremath{#1})-}
\newcommand{\www}[2][ ]{\ensuremath{#2}{#1}\ }
\newcommand{\wref}[2][ ]{\ \eqref{#2}{#1}\ }
\newcommand{\hsp}{\hspace{10 mm}}
\newcommand{\hsm}{\hspace{2 mm}}
\newcommand{\vsm}{\vspace{2 mm}}
\newcommand{\xra}[1]{\xrightarrow{#1}}
\newcommand{\Aut}{\operatorname{Aut}}
\newcommand{\AG}{\Aut(\Gamma)}
\newcommand{\clos}{\operatorname{cl}}
\newcommand{\cspace}{configuration space}
\newcommand{\Conf}{\operatorname{Conf}}
\newcommand{\Emb}[2]{\operatorname{Emb}^{#1}({#2})}
\newcommand{\eq}{\operatorname{eq}}
\newcommand{\Euc}[1]{\operatorname{Euc}^{#1}}
\newcommand{\Eucp}[1]{\operatorname{Euc}^{#1}_{+}}
\newcommand{\Id}{\operatorname{Id}}
\newcommand{\new}{\operatorname{new}}
\newcommand{\open}{\operatorname{op}}
\newcommand{\Or}[1]{\operatorname{O}({#1})}
\newcommand{\PSL}[1]{\operatorname{PSL}\sb{#1}(\R)}
\newcommand{\SO}[1]{\operatorname{SO}({#1})}
\newcommand{\be}{\mathbf{e}}
\newcommand{\ve}{\vec{\be}}
\newcommand{\vel}{\vec{\ell}}
\newcommand{\wel}{\widehat{\ell}}
\newcommand{\bss}{\mathbf{s}}
\newcommand{\vvs}{\vec{\bss}}
\newcommand{\bte}{\mathbf{t}}
\newcommand{\vt}{\vec{\bte}}
\newcommand{\vth}{\vec{\theta}}
\newcommand{\bu}{\mathbf{u}}
\newcommand{\hu}{\widehat{\bu}}
\newcommand{\bv}{\mathbf{v}}
\newcommand{\hv}{\widehat{\bv}}
\newcommand{\bw}{\mathbf{w}}
\newcommand{\hw}{\widehat{\bw}}
\newcommand{\bx}{\mathbf{x}}
\newcommand{\by}{\mathbf{y}}
\newcommand{\bz}{\mathbf{z}}
\newcommand{\RP}[1]{{\mathbf P}\RR{#1}}
\newcommand{\R}{{\mathbb R}}
\newcommand{\RR}[1]{\R\sp{#1}}
\newcommand{\hR}{\widehat{\R}}
\newcommand{\bT}[1]{{\mathbb T}\sp{#1}}
\newcommand{\Gn}[1]{\Gamma\sb{#1}}
\newcommand{\Gen}[1]{\Gamma\sp{\eq}\sb{#1}}
\newcommand{\Gnl}[1]{\Gn{#1}\sp{\open}}
\newcommand{\Gkl}{\Gnl{k}}
\newcommand{\Gkc}[1]{\Gn{#1}\sp{\clos}}
\newcommand{\TG}{\T\sp{\Gamma}}
\newcommand{\vv}{\vec{\bv}}
\newcommand{\hx}{\hat{\bx}}
\newcommand{\wx}{\tilde{\bx}}
\newcommand{\bt}{\mathbf{t}}
\newcommand{\hy}{\hat{\by}}
\newcommand{\wy}{\tilde{\by}}
\newcommand{\hz}{\hat{\bz}}
\newcommand{\wz}{\tilde{\bz}}
\newcommand{\bze}{\mathbf{0}}
\newcommand{\vz}{\vec{\bze}}
\newcommand{\halpha}{\widehat{\alpha}}
\newcommand{\hbeta}{\widehat{\beta}}
\newcommand{\C}{{\EuScript C}}
\newcommand{\Cs}{\C\sb{\ast}}
\newcommand{\Cso}{\C\sb{\ast}\sp{o}}
\newcommand{\hE}{\widehat{E}}
\newcommand{\wE}{\widetilde{E}}
\newcommand{\hF}{\widehat{F}}
\newcommand{\wF}{\widetilde{F}}
\newcommand{\CsG}{\Cs(\Gamma)}
\newcommand{\CsoG}{\Cso(\Gamma)}
\newcommand{\hC}{\widehat{\C}}
\newcommand{\wC}{\widetilde{\C}}
\newcommand{\hCo}{\widehat{\C}\sp{o}}
\newcommand{\wCo}{\widetilde{\C}\sp{o}}
\newcommand{\wSC}{\widetilde{\Sc\C}}
\newcommand{\wSG}{\wSC(\Gamma)}
\newcommand{\wSGn}[1]{\wSC(\Gen{#1})}
\newcommand{\hSC}{\widehat{\Sc\C}}
\newcommand{\CG}{\C(\Gamma)}
\newcommand{\hCG}{\hC(\Gamma)}
\newcommand{\wCG}{\wC(\Gamma)}
\newcommand{\wCGn}[1]{\wC(\Gen{#1})}
\newcommand{\hCoG}{\hCo(\Gamma)}
\newcommand{\wCoG}{\wCo(\Gamma)}
\newcommand{\SG}{\Sc\C(\Gamma)}
\newcommand{\SsG}{\Sc\CsG}
\newcommand{\hSG}{\hSC(\Gamma)}
\newcommand{\Fc}{{\mathcal F}}
\newcommand{\cM}{{\mathcal M}}
\newcommand{\Pc}{{\mathcal P}}
\newcommand{\Sc}{{\mathcal S}}
\newcommand{\Vc}{{\mathcal V}}
\newcommand{\T}{{\EuScript T}}
\newcommand{\mX}{\mathscr{X}}
\newcommand{\CX}{C\sb{\mX}}
\newcommand{\vare}{\varepsilon}
\begin{document}

\title[Symmetric configuration spaces of linkages]
{Symmetric configuration spaces of linkages}

\author[D. Blanc]{David Blanc}
\address{David Blanc \\ Department of Mathematics \\ Haifa University \\ Haifa 3498838, Israel}
\email{blanc@math.haifa.ac.il}

\author[N. Shvalb]{Nir Shvalb}
\address{Nir Shvalb \\ Department of Mechanical Engineering\\ Ariel University \\ Ariel 40700, Israel}
\email{nirsh@ariel.ac.il}

\subjclass[2020]{Primary 70G40; Secondary 57R45, 70B15}
\keywords{\cspace, workspace, robotics, mechanism, linkage, kinematic, symmetry}

\begin{abstract}
A \emph{configuration} of a linkage $\Gamma$ is a
possible positioning of $\Gamma$ in \w[,]{\RR{d}} and the
collection of all such forms the \cspace\ \w{\CG} of $\Gamma$.
We here introduce the notion of the \emph{symmetric \cspace} of a linkage,
in which we identify configurations which are geometrically indistinguishable.
We show that the symmetric \cspace\ of a planar polygon has a regular cell structure,
provide some principles for calculating this structure, and give a complete description
of the symmetric \cspace\ of all quadrilaterals and of the equilateral pentagon.\end{abstract}

\maketitle

%
%
\section{Introduction}
\label{cint}
The mathematical theory of robotics is based on the notion of a
mechanism consisting of links connected by flexible joints. More precisely,
a \emph{linkage} $\Gamma$ is a metric graph, with edges (of fixed lengths)
corresponding to the links, and vertices corresponding to the joints. See
\cite{MerlP,SeliG,TsaiR} and \cite{FarbT} for surveys of the mechanical
and topological aspects, respectively.

A central tool in studying such a linkage is its \cspace\ \w[,]{\CG}
a topological space whose points correspond to possible positionings of $\Gamma$
in the ambient Euclidean space \w[.]{\RR{d}} These spaces are useful for
understanding actuations, motion planning, and singular configurations of
the mechanisms (see, e.g., \cite{FGranS,KTezuT,KTsukC,MTrinG,BBSShvaT,SSBlaCA});
in recent years, the related notion of topological complexity has been a topic of
much research (see \cite{FarbTC} and
\cite{BGRTamaH,BReciT,BKaluET,DCoheTC,DavisTC,FPaveS,MWuTC}).

Observe that in the standard construction of \w{\CG} we
distinguish between configurations which are functionally
equivalent though formally distinct: thus if $\Gamma$ consists of
a fixed platform with two identical free arms \w{ABC} and
\w[,]{ADE} the two positions shown in Figure \ref{ftwoarm} are
considered distinct configurations, but are functionally the same.
Thus it makes sense to consider a version of the configuration
space in which they are identified.

%
%
\begin{figure}[ht]
\centering{\includegraphics[width=0.5\textwidth]{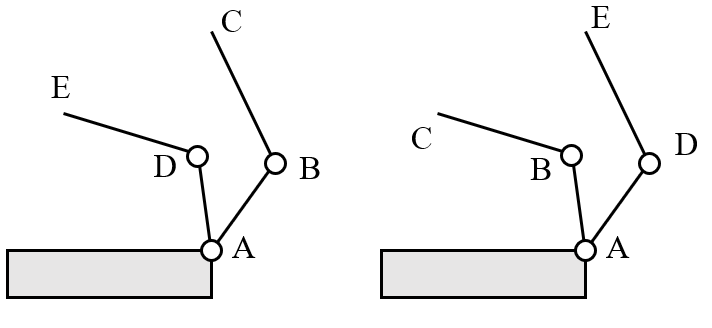}}
\caption{Two arm mechanism}
\label{ftwoarm}
\end{figure}

For this purpose, we introduce the notion of the \emph{symmetric
\cspace} of a linkage $\Gamma$, in which points of the usual
\cspace\ \w{\CG} are identified if they differ by an automorphism
of $\Gamma$ \wh which we can think of as a relabelling of vertices of $\Gamma$
which does not change its geometric relations (i.e., which vertices are connected by an
edge, and the length of this edge). One should note that there are
two useful versions of the \cspace\ of a linkage, \emph{fully
reduced} and \emph{reduced} (depending on whether we divide the
set of embeddings by all Euclidean isometries of the ambient space
\w[,]{\RR{d}} or only by the orientation-preserving ones \wh see \S
\ref{srestr} below). There are also two corresponding types of
symmetric \cspace s.

Although to the best of our knowledge, the concept of the symmetric \cspace\ of
a linkage has not appeared in the mathematical or engineering literature, it has an
obvious intuitive meaning: in real life, mechanisms do not have natural labellings
of their joints, and for practical purposes, the two arms of Figure \ref{ftwoarm}
are indistinguishable. Of course, if each arm is used to grasp a different object,
the distinction is important, which is why the usual notion of a \cspace\ is more
generally applicable.  However, for motion planning for the two-arm mechanism from
rest, the symmetric \cspace\ is the more economical version to use.

Intuitively, each point in the symmetric \cspace\ can be thought of as instructions
for specifying a rigid configuration of a real-life linkage in the ambient space
\w{\RR{d}} \wb[,]{d=2,3} without labelling links or joints which are indistinguishable
in the abstract mechanism $\Gamma$.

Another possible application is to molecules with structural
symmetries, whose reduced \cspace\ represents the mutual positions
of their constituent atoms in space (the fully reduced \cspace\
does not distinguish between the two chiralities, if they exist).
See, e.g., \cite[Ch.\ III]{HSharI}.

Our main results in this paper are concerned with the planar configurations of
the closed chain with $n$ links. On a theoretical level, we
provide a systematic approach to describing an equivariant cell
structure on the \cspace\ of a closed chain under action of the
group \w{\AG} of automorphisms of the linkage:

\begin{thma}
  The reduced \cspace\ of an $n$-gon in the plane has a regular
  \wwd{\AG}equivariant cell structure, subordinate to the standard regular cell
  structure, and similarly for the fully reduced \cspace.
\end{thma}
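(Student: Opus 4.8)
The plan is to take the standard regular cell structure on the reduced configuration space, constructed in the preceding section, and to refine it to a cell structure that is simultaneously regular as a CW complex and compatible with the action of $\AG$. The organizing principle is the classical fact that a cellular action of a finite group becomes \emph{regular} after passing to the barycentric subdivision, in the sense that the setwise stabilizer of every cell fixes it pointwise; the orbit space of a regular CW complex under a finite group action with this property is again a regular CW complex. This is precisely what will later allow the symmetric configuration space to be realized as a regular CW complex, by taking the quotient.

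First I would check that $\AG$ acts cellularly on $\CG$ with respect to the standard structure. An element of $\AG$ is a relabelling of the edges and vertices of $\Gamma$ preserving incidences and edge-lengths, and it acts on a configuration by permuting the associated edge data. Because each standard cell is cut out by combinatorial conditions on the edges, such a permutation carries every open cell homeomorphically onto another open cell and respects the incidences among their closures. Hence $\AG$ permutes the cells of the standard structure and acts cellularly; the identical reasoning applies on the fully reduced configuration space, since the residual reflection distinguishing the two spaces commutes with relabelling of edges.

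Next I would pass to the barycentric subdivision $K'$ of the standard structure $K$, realized as the order complex of the face poset $P$ of $K$. This $K'$ is a simplicial complex homeomorphic to $\CG$, it refines $K$ (so the resulting structure is subordinate to the standard one), and the cellular $\AG$-action on $K$ induces a simplicial $\AG$-action on $K'$ via the induced order-automorphisms of $P$. A simplex of $K'$ is a chain $c_0 < c_1 < \dots < c_k$ in $P$; if $g \in \AG$ fixes this simplex setwise, then $g$ restricts to an order-automorphism of a finite chain, which must be the identity, so $g$ fixes each vertex $c_i$ and hence, being affine on simplices, fixes the whole simplex pointwise. This produces a regular $\AG$-equivariant cell structure subordinate to the standard one, and the fully reduced case is handled word for word.

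The step I expect to be the main obstacle is the cellularity asserted in the second paragraph: translating the abstract action of $\AG$ on configurations into an honest cellular action on the explicit standard decomposition. This amounts to matching the edge-permutation action with the combinatorial indexing of the standard cells, and checking that no automorphism moves a configuration across a cell boundary. Once this is secured, the subdivision argument and the stabilizer computation are purely formal.
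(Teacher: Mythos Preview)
Your argument is correct, but it takes a genuinely different route from the paper's. The paper does not invoke the abstract barycentric subdivision at all: instead it explicitly subdivides each standard cell $\hE_\sigma$ into ``fine cells'' $\hF_\sigma(i_k,i_{k\pm1})$ determined by geometric inequalities among consecutive angle differences $|\theta_i-\theta_{i+1}|$, separated by codimension-one ``membranes'' given by equalities such as $|\theta_{i_k}-\theta_{i_k+1}|=|\theta_{j_m}-\theta_{j_m+1}|$. It then translates these into the affine coordinates \eqref{eqaffinecell} via the M\"obius map $\Pc$, obtains semi-algebraic sets, and appeals to Hironaka's and {\L}ojasiewicz's triangulation theorems to finish. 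Your barycentric-subdivision argument is shorter, avoids the semi-algebraic machinery entirely, and works for any finite group acting cellularly on a regular CW complex; what the paper's explicit construction buys is a geometrically meaningful decomposition (smallest angle gap, etc.) that is actually used in the subsequent concrete computations for the hexagon (Section~\ref{cfch}) and the pentagon (Section~\ref{ceqpent}), where the fine cells and their membranes are identified by hand. Your approach proves Theorem~A but would not by itself produce Figures~\ref{fhextetr} or~\ref{fmmmmmsym}.
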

\noindent See Theorem \ref{ttriang} and Corollary \ref{ctriang} below.
From the equivariant cell structure for these two types of \cspace\  we can then derive
directly an induced (ordinary) cell structure for the two types of symmetric \cspace.

The symmetric cells of the \cspace\ \wwh that is, those
fixed under various subgroups of \w{H\leq\AG} \wwh play a central role in describing the
cell structure of two types of symmetric \cspace s, and we show:

\begin{thmb}
The fixed point set of the fully reduced \cspace\ of a planar
polygon $\Gamma$ under a subgroup $H$ of \w{\AG} is a disjoint
union of components (indexed by the discrete set of configurations
fixed under \w{\AG} itself), each of which fibers successively
over intervals or tori.
\end{thmb}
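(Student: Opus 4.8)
Denote by $X$ the fully reduced \cspace\ of the $n$-gon $\Gamma$ and by $X^{H}$ its fixed set under $H\le\AG$. The plan is to identify $X^{H}$ with the \cspace\ of a ``symmetric sub-linkage'' obtained from a fundamental domain of the $H$-action on $\Gamma$, and then to analyze that space by placing its vertices one joint at a time, reading off the successive fibering from this sequential construction.

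First I would unwind the fixed-point condition. Since $\Gamma$ is an $n$-gon, $\AG$ sits inside the dihedral group $D_{n}$ of symmetries of the $n$-cycle that preserve edge lengths, so $H$ is either cyclic or dihedral. A point of $X$ is a planar placement $\varphi$ of $\Gamma$ taken modulo $\Isom(\RR{2})$, and it is fixed under $h\in H$ precisely when $\varphi\circ h=g_{h}\circ\varphi$ for some $g_{h}\in\Isom(\RR{2})$. Modulo the isometry identification the assignment $h\mapsto g_{h}$ is a homomorphism realizing $H$ as a finite group of plane isometries; such a group has a common fixed point $c$ and is conjugate into $\Or{2}$ acting about $c$. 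Hence an $H$-fixed configuration has a geometric center $c$ about which $H$ acts by rotations and reflections, and it is completely determined by the placement of one representative from each $H$-orbit of vertices, subject to the edge constraints of $\Gamma$ together with the pinning of any representative that lies on $c$ or on a reflection axis $L$.

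For the indexing, I would note that imposing all of $\AG$ rigidifies the polygon: the fully symmetric placements are the finitely many ``regular'' positions, so $X^{\AG}$ is discrete, and $X^{\AG}\subseteq X^{H}$ since $H\le\AG$. I claim the components of $X^{H}$ are in bijection with $X^{\AG}$, each containing exactly one fully symmetric configuration, and this should drop out of the explicit parametrization: the discrete data of a symmetric placement --- its center type and the cyclic arrangement of orbit representatives --- both labels a component and singles out the $\AG$-fixed configuration inside it. Having fixed that discrete type, I would use the $\Isom(\RR{2})$ freedom to normalize $c$ to the origin and, when $H$ is dihedral, the axis $L$ to a coordinate line, and then build the fundamental chain of orbit representatives $v_{1},\dots,v_{m}$ in order along $\Gamma$. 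Given the vertices placed so far, the next lies on a circle of radius equal to the relevant edge length about its predecessor; forgetting the outermost representative thus exhibits the component as a bundle whose base is that circle (a one-dimensional torus), with fiber the analogous space for the shorter chain, and recursion yields the asserted successive fibering. Two phenomena turn a base circle into an \emph{interval}: a representative pinned to $c$ or to $L$ moves only along a segment, and, in the cyclic case, after normalizing the radial parameter the residual reflection from the full isometry quotient folds a free-angle circle $S^{1}$ onto an interval via $\theta\mapsto-\theta$. This is precisely why each base is an interval \emph{or} a torus.

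The main obstacle is the closure step together with local triviality. In the cyclic case of order $k$ the fundamental chain closes not onto itself but onto the rotate $\rho(v_{1})$ by $2\pi/k$, so the final edge imposes one equation coupling the last representative to the first; I would absorb this by restricting one base factor to an interval rather than a full circle, and the delicate point is to verify that this restriction is clean --- that the resulting map is a genuine interval or torus bundle and not a singular stratification. Reconciling the isometry quotient with the sequential build is the other sensitive issue: for cyclic $H$ the center is pinned but a residual $\SO{2}$ of rotations about $c$ and a residual reflection must still be divided out, which is where a torus factor is split off or folded to an interval, whereas for dihedral $H$ fixing $c$ and $L$ already exhausts the continuous isometries. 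To rule out degenerate fibers and to pin down the component count I would lean on Theorem~A (Theorem~\ref{ttriang} and Corollary~\ref{ctriang}): since the cell structure is regular and $\AG$-equivariant, $X^{H}$ is a subcomplex, and compatibility of the vertex-by-vertex construction with this regular structure should force the fibers to be the claimed intervals and tori and force each component to contain a unique $\AG$-fixed vertex of the complex, establishing the bijection with $X^{\AG}$.
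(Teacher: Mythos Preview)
Your overall framework—realize $H$ geometrically via $h\mapsto g_{h}$, fix the center $c$, and reduce to a fundamental chain—is the right starting point and matches how the paper begins. But there is a genuine gap precisely at the step you yourself flag as ``the main obstacle'': the closure condition, and with it the indexing.

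Your proposal for closure (``absorb this by restricting one base factor to an interval rather than a full circle'') does not match what actually happens in the rotation case, and you correctly sense but do not resolve the difficulty. In the paper's treatment of a rotation subgroup $H=C_{d}$ (Proposition~\ref{lsymmetcellcy}), no circle factor is collapsed: the fundamental open chain $\Delta$ of $k=n/d$ edges contributes the \emph{entire} torus $\hC(\Delta)\cong(S^{1})^{k-1}$. Closure is handled not by cutting down a factor but by the observation that the endpoints $v_{1},\rho(v_{1}),\dotsc,\rho^{d-1}(v_{1})$ of the $d$ rotated copies of $\Delta$ themselves form a regular equilateral $d$-gon $\Gen{d}$ of some edge length $L$. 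The discrete index is then the finite set of fully symmetric configurations $\wz$ of this auxiliary ``skeleton'' $d$-gon (convex, star, and for even $d$ the collinear one). This skeleton polygon is the structural idea your sketch is missing: once it is in place, closure is automatic—any reduced open-chain configuration $\hy$ of $\Delta$ determines $L$ as the distance between its endpoints and hence attaches to each skeleton type $\wz$—and the component count drops out directly. The reflection-folding $\theta\mapsto-\theta$ you invoke enters only when the skeleton $\wz$ is collinear, which is exactly when the paper replaces $\hC(\Delta)$ by $\wC(\Delta)$. Note also that the indexing set is $\wC(\Gen{d})^{D_{2d}}$, the fully symmetric configurations of the auxiliary $\Gen{d}$; this is what the phrase ``configurations fixed under $\AG$ itself'' in the theorem summary refers to, and it is generally not the same as $\wCG^{\AG}$, so your plan to locate exactly one $\AG$-fixed configuration of $\Gamma$ per component would not go through as stated.

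The single-reflection case (Proposition~\ref{lreflsym}) and the dihedral case (Theorem~\ref{lsymmetcelldy}) follow the same skeleton-plus-chain philosophy, refined for the reflection axis: for $H\cong D_{2d}$ one fixes a skeleton type, then a real parameter $L\in[0,L_{0}]$ for its edge length, then an interval of positions for the geometric axis $\wx(k)$, and finally a product of two open-chain spaces for the fundamental subchains $\Delta'$ and $\Theta'$ on either side of the axis. Your vertex-by-vertex scheme could in principle be pushed through, but only after simultaneously disentangling closure and the two reflections; the paper's decomposition makes the successive fibering over intervals and tori explicit from the outset. Finally, your appeal to the equivariant cell structure of Theorem~A is not needed, and the paper does not use it here: the fixed-point analysis is carried out by direct geometry.
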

\noindent See Propositions \ref{lreflsym} and \ref{lsymmetcellcy} and
Theorem \ref{lsymmetcelldy} for a more precise description.

The remainder of the paper is devoted to two specific calculations.  We show:

\begin{thmc}
The reduced symmetric \cspace\ of a planar quadrilateral is
homeomorphic to a closed interval, a circle, a wedge of a circle and a segment,
or a circle with its diameter.
\end{thmc}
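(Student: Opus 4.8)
The plan is to reduce the computation to an explicit low-dimensional model and then quotient by the relevant symmetry group. First I would recall the standard description of the reduced \cspace\ of a quadrilateral: after using the orientation-preserving isometries to pin the ground edge, placing $v_{1}$ at the origin and $v_{4}=(\ell_{4},0)$, a configuration is determined by the two angles $(\alpha,\beta)$ giving the directions of the edges at $v_{1}$ and $v_{4}$, subject to the single closure constraint $|v_{2}(\alpha)-v_{3}(\beta)|=\ell_{2}$. Thus $\CG$ is realized as a level set of a smooth function on the torus $T^{2}$, which by Theorem A carries a regular $\AG$-equivariant cell structure: for generic edge lengths it is a closed $1$-manifold, hence a disjoint union of circles, while for the non-generic (singular) length values the curve acquires nodes and becomes a wedge of circles or a theta-type graph.

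Next I would determine $\AG$ together with its action on this model. The length-preserving automorphisms form a subgroup of the dihedral group $D_{4}$, and which subgroup occurs is dictated by the pattern of equalities among $\ell_{1},\ell_{2},\ell_{3},\ell_{4}$: the generic quadrilateral has at most a single reflection, the ``kite'' and ``isosceles'' patterns give one order-two reflection (through opposite vertices, respectively through opposite edge-midpoints), the pattern $\ell_{1}=\ell_{3}$, $\ell_{2}=\ell_{4}$ gives the central involution, and the equilateral case gives all of $D_{4}$. Each generator acts on the pinned model by an explicit affine involution of $T^{2}$; for instance the reflection interchanging $v_{1}\leftrightarrow v_{4}$ and $v_{2}\leftrightarrow v_{3}$, present when $\ell_{1}=\ell_{3}$, re-pins to the half-turn $z\mapsto \ell_{4}-z$ of the plane and hence acts by $(\alpha,\beta)\mapsto(\beta+\pi,\alpha+\pi)$. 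Computing these formulas for the remaining generators is routine linear bookkeeping.

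With the action in hand, the quotient is computed component by component using the fixed-point analysis of Theorem B. An order-two automorphism preserving a circle component acts there either freely, with quotient a circle, or as a reflection with exactly two fixed points, with quotient a closed interval; an automorphism interchanging two circle components has quotient a single circle. For the singular length values one must additionally track how the symmetry permutes the nodes of the graph and the branches incident to them, and this produces the two remaining homeomorphism types: a wedge of a circle and a segment, and a circle together with its diameter (the theta graph). Running through the finite list of (symmetry type) $\times$ (length regime) combinations, and discarding the empty and point-like degenerate cases, then yields exactly the four stated homeomorphism types.

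The main obstacle I anticipate is twofold. First, in the singular cases $\CG$ is not a manifold, and one must analyze the local structure at each node and decide precisely how the induced involution acts on the incident edges; a careless count there could produce a figure-eight in place of the correct theta graph, so the branch-permutation data must be read off carefully from Theorem B. Second, one must verify completeness --- that no other quotient type can arise --- which amounts to checking that the number of fixed points of each involution on each circle is forced (never, say, one or three) and that the enumeration of symmetry-by-length regimes is genuinely exhaustive. Establishing these two points is where the real work lies; the individual quotient computations are then immediate.
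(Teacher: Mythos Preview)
Your plan is correct and matches the paper's approach: parametrize $\hCG$ by a pair of angle coordinates, identify $\AG\leq D_{4}$ from the pattern of equal edge lengths, write each generating involution explicitly on this model, and then compute the quotient case by case (the paper carries this out in an appendix of six lemmas, one per Milgram--Trinkle length regime, using the concrete picture of the circles $\gamma_{B}$, $\gamma_{C}$ and the arc $\zeta$ rather than the abstract level-set description). One remark: your appeals to Theorems~A and~B are superfluous here --- the paper does not invoke the general $n$-gon machinery for the quadrilateral at all, since in dimension one the fixed-point count on each circle and the branch-permutation at each node are elementary and can be read off directly from the geometric model.
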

See Theorem \ref{tscsq} below, where the cases in which each value obtains are
described in full.

\begin{propd}
  The fully reduced symmetric \cspace\ of the planar equilateral pentagon
  is homeomorphic to a closed disc.
\end{propd}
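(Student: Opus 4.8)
The plan is to realize the fully reduced symmetric \cspace\ as a single global quotient and then pin down its homeomorphism type by the classification of surfaces. Write $M$ for the reduced \cspace\ of the equilateral pentagon. By Theorem~A (Theorem~\ref{ttriang}) $M$ carries a regular \wwd{\AG}equivariant cell structure; since the pentagon is equilateral it admits no collinear configurations, so $M$ is a closed orientable surface, and one reads off from the cell structure that it has genus $4$, i.e.\ $\chi(M)=-6$. The full symmetry acting on $M$ is the group $G=\AG\times\langle\tau\rangle\cong D_5\times\mathbb{Z}/2$ of order $20$, where $\AG=D_5$ consists of the relabelings of the pentagon and $\tau$ is the orientation-reversing chirality involution (these commute, acting on labels and on positions respectively). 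The fully reduced symmetric \cspace\ is then exactly the orbit space $M/G$, and the goal is to show $M/G\cong D^2$.

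First I would reduce the claim to an Euler-characteristic count. A finite group acting on a closed surface has orbit space again a compact surface, possibly with boundary, since every local isotropy group is a finite subgroup of $O(2)$ and the quotient of \w{\RR{2}} by such a group is homeomorphic either to \w{\RR{2}} (rotations only) or to a half-plane (when reflections occur); in particular no non-manifold points arise, even at points where several fixed curves cross. Now $M/G$ is connected because $M$ is, and it has nonempty boundary because the orientation-reversing elements of the form $g\tau$ fix the mirror-symmetric pentagons, a nonempty union of circles whose image is boundary. By the classification of compact surfaces, a connected compact surface with nonempty boundary and Euler characteristic $1$ is the closed disc, so it suffices to prove $\chi(M/G)=1$.

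Next I would compute $\chi(M/G)$ from the fixed-point data supplied by Theorem~B, using the averaging formula $\chi(M/G)=\frac{1}{|G|}\sum_{g\in G}\chi(M^{g})$, valid for the topological Euler characteristic of a quotient by a finite group. The ten orientation-reversing elements $g\tau$ have fixed sets that are disjoint unions of circles (the reflection-symmetric cells of Proposition~\ref{lreflsym}), so each contributes $0$. Among the orientation-preserving elements, the four nontrivial powers of the cyclic relabeling fix precisely the configurations with five-fold symmetry — the convex regular pentagon and the regular pentagram, each in its two chiralities, which are distinct points of $M$ — hence four isolated points contributing $4$ apiece; and the five reversal-type relabelings are orientation-preserving involutions, each fixing exactly the two centrally symmetric pentagons, contributing $2$ apiece. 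Summing gives $\sum_{g\neq 1}\chi(M^{g})=16+10=26$, whence $\chi(M/G)=\tfrac{1}{20}(-6+26)=1$, as required.

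The hard part will be the rigorous determination of all these fixed loci — the symmetric cells — and in particular keeping track of how chirality interacts with the combinatorial symmetries: because $M$ is formed modulo orientation-preserving isometries only, a configuration and its mirror image are generically distinct points, which is exactly what doubles the count of rotationally symmetric configurations from two to four and is the source of the value $\chi=1$. This is where Theorem~B and the accompanying Proposition~\ref{lsymmetcellcy} and Theorem~\ref{lsymmetcelldy} do the real work, by identifying the components of each $M^{H}$ and their fibering over intervals or tori. An alternative and more hands-on route, closer in spirit to the cell-theoretic machinery, would be to read off the induced regular cell structure on $M/G$ directly from the equivariant structure on $M$, exhibit an explicit fundamental domain, and collapse it along its boundary identifications to a disc; this bypasses the classification theorem but requires the same bookkeeping of orbits and stabilizers.
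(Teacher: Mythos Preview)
Your argument is correct and takes a genuinely different route from the paper's. The paper works entirely by hand: it assembles an explicit fundamental domain \w{\Fc} for the \wwd{\AG}action on \w{\wCGn{5}} by gluing together one fine subcell from each of the five cell types of \S\ref{scpent} (one tenth of each pentagonal cell, one half of each hexagonal or triangular cell), and then checks directly which boundary arcs are identified and which remain as free boundary, concluding that the result is a disc. Your approach replaces this combinatorial bookkeeping with a single Euler-characteristic computation via the averaging formula, together with the classification of compact surfaces with boundary. What your method buys is economy and robustness: once the fixed loci are known, the answer drops out without tracking any attaching maps. What the paper's method buys is an explicit cell decomposition of the disc itself (Figure~\ref{ffund}), which is more informative if one wants to locate specific configurations inside \w[.]{\wSGn{5}}

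Two minor points of precision are worth tightening. First, your phrase ``centrally symmetric pentagons'' for the fixed points of a label-reflection \w{\rho} is misleading: these are the configurations admitting a rotation by $\pi$ about one vertex (so that the triangle \w{A\sb{1}A\sb{2}A\sb{3}} has sides \w[,]{1,1,\tfrac12} determined up to chirality), not configurations with a global centre of symmetry. Second, among the orientation-reversing elements only the five \w{\rho\tau} have nonempty fixed sets; \w{\tau} itself and the \w{\sigma\tau} act freely (since an orientation-reversing isometry realizing a nontrivial cyclic relabeling on five non-collinear points would force \w{S\sp{5}} to fix them all). This does not affect your count, since empty sets and circles alike contribute \w{\chi=0}, but it is the reason the boundary is nonempty.
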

See Proposition \ref{pscspent} below.

\begin{subsection}{Organization and main results}
\label{sorg}
In Section \ref{cbcspaces} we review the main notions needed to define the
various types of \cspace s of a mechanism, and introduce the corresponding versions of
symmetric \cspace s. Simple examples of symmetric configuration spaces are
given in Section \ref{cescspaces}, including a full description of planar quadrilaterals
(with the details appearing in Appendix \ref{cquad}). In Section \ref{cgppoly} we recall
some general facts about the \cspace s of planar polygons in general, including their
cell structure. Section \ref{capp} is devoted to the automorphisms of planar polygons
$\Gamma$, culminating in Theorem \ref{ttriang}. Section \ref{cscpp} discusses symmetric
configurations for planar polygons \wh that is, the fixed-point sets of the \cspace\ under
various subgroups $H$ of \w[.]{\AG} Section \ref{cfch} shows how these general results
may be applied to obtain an equivariant triangulation of one non-equivariant cell,
in the case where $\Gamma$ is the equilateral hexagon.
Finally, Section \ref{ceqpent} provides a full description of the fully reduced
symmetric \cspace\ of the equilateral pentagon.
\end{subsection}

%
%
\section{Configuration spaces}
\label{cbcspaces}

We first recall some general background material on the
construction and basic properties of \cspace s. This also serves
to fix notation, which is not always consistent in the literature.

\begin{definition}\label{dcspace}
Consider an abstract graph \w{\TG} with vertices $V$ and edges
\w{E\subseteq V\sp{2}} (with no loops or parallel edges, but not necessarily connected).
A \emph{linkage} (or \emph{mechanism}) $\Gamma$ of type
\w{\TG} is determined by a function \w{\ell:E\to\RR{}\sb{+}}
specifying the length \w{\ell\sb{i}} of each edge \w{e\sb{i}}
in \w{E=\{e\sb{i}=(u\sb{i},v\sb{i})\}\sb{i=1}\sp{k}} (subject to the triangle
inequality as needed).  We write
\w{\vel:=(\ell\sb{1},\dotsc,\ell\sb{k})\in\RR{E}} for the vector of lengths.

The \emph{\cspace} of the linkage $\Gamma$ is the metric subspace
\w{\CG:=\lambda^{-1}(\vel)} of \w{(\RR{d})\sp{V}}
(a real algebraic variety), where the map \w{\lambda:(\RR{d})\sp{V}\to\RR{E}}
is given by \w[.]{\lambda(u\sb{i},v\sb{i}):=\|\varphi(u\sb{i})-\varphi(v\sb{i})\|}
A point \w{\bx\in\CG} is called a \emph{configuration} of $\Gamma$. Note that
\w{\CG} is a subspace of the space \w{\Emb{d}{\TG}\subseteq(\RR{d})\sp{V}} of
\emph{embeddings} of $V$ in \w{\RR{d}} (without collisions).
\end{definition}

\begin{subsection}{Isometries of \cspace s}
\label{srestr}
The group \w{\Euc{d}} of isometries of the Euclidean space
\w{\RR{d}} acts on the space \w[.]{\CG} Taking this action into account allows us to
reduce the dimension of \w{\CG} without losing any interesting information,
as follows:

If we choose a fixed vertex \w{v\sb{0}} of $\Gamma$ as its \emph{base-point},
the action of the translation subgroup \w{T\cong\RR{d}} of \w{\Euc{d}}
on \w{\bx(v\sb{0})} \emph{is} free, so its action on \w{\CG} is free, too, and
we might reduce the degrees of freedom of \w{\CG} by considering its quotient
under this action.

However, such a choice will not fit in with our notion of
symmetries, so for our purposes it is more convenient to think of the coordinate
frame with the barycenter \w{B(\bx)} of a given configuration \w{\bx\in\CG}
at the origin.
We therefore define the \emph{pointed \cspace} for $\Gamma$ to be the quotient space
\w{\CsG:=\CG/T} under translations. Thus \w[,]{\CG\cong\CsG\times\RR{d}} and a pointed
configuration (i.e., an element of \w[)]{\CsG} is simply an ordinary
configuration expressed in terms of a coordinate frame for \w{\RR{d}}
with the barycenter at the origin.  Essentially, this means replacing the Euclidean ambient
space \w{\RR{d}} by the corresponding affine space, equipped with  a
chosen direction for each axis.

If we divide \w{\CG} by the action of the group \w{\Eucp{d}} of orientation-preserving
isometries of the ambient space \w[,]{\RR{d}} we obtain the \emph{reduced \cspace}
of $\Gamma$, denoted by \w[.]{\hCG} When $\Gamma$ has a rigid ``base platform'' $P$
of dimension \www[,]{\geq d-1} the action of \w{\Eucp{d}} is free.
For example, if \w[,]{d=2} we may fix a vertex \w{v\sb{0}} and a link $\vv$ in
$\Gamma$ starting at \w[,]{v\sb{0}}  and let \w{p:\CsG\to S\sp{d-1}} assign to a
configuration $\Vc$ the direction of $\vv$. The fiber of $p$ \w{\ve\sb{1}\in S\sp{d-1}}
is \w[.]{\hCG}

Dividing \w{\CG} by the full group \w{\Euc{d}} of all isometries of \w{\RR{d}}
we obtain the \emph{fully reduced \cspace} \w{\wCG} of $\Gamma$.
Note that any configuration whose image is contained in a line is fixed under
reflections in that line, so the action of \w{\Euc{d}} may not be free.
Thus \w{\CG} is not generally isomorphic to \w[.]{\wCG\times\Euc{d}}
Nevertheless, \w{\wCG} is the most economical model for most linkages $\Gamma$.
\end{subsection}

\begin{subsection}{Normalization}\label{snor}
Note that the image of a configuration \w{\bx\in\CG} is the same
as the image of \w{\bx\circ f} for any automorphism \w{f:V\to V}
of the linkage $\Gamma$ (that is, a relabelling of the vertices
preserving adjacency and edge length). This does not mean
that we have an isometry of \w{\RR{d}} taking $\bx$ to
\w{\by:=\bx\circ f} \wwh e.g., when \w{\TG} is a bouquet of
circles (closed chains), we may reflect one of them, leaving the
rest in place (see also Figure \ref{ftwoarm}).

However, when $\Gamma$ has a rigid ``base platform'' $P$ of dimension $\geq d-1$,
as above, we can identify \w{\CG} with \w[:]{\hCG\times\Eucp{d}} that is, every
configuration $\bx$ in \w{\CG} can be \emph{normalized} uniquely to a reduced
configuration \w[,]{\hx\in\hCG} by placing the platform in a standard direction and
moving the barycenter of $\bx$ to the origin. This will be denoted by
\w[,]{\hx=N(\bx)=T\sb{\bx}(\bx)} where the specific transformation
\w{T\sb{\bx}\in\Eucp{d}} used to normalize $\bx$ may not depend
continuously on $\bx$, but \w{N:\CG\to\hCG} \emph{is} continuous, thus providing
a canonical section for the quotient map \w[.]{q:\hCG\to\CG}

Any polygon $\Gamma$ in the plane always has such a rigid platform, so when $\Gamma$
is  equilateral, a reduced configuration $\hx$ is completely determined by the
orientation (that is, a cyclic ordering of the vertices
\w[)]{A\sb{1},\dotsc,A\sb{n},A\sb{1}}  and the sequence of angles
\w{(\phi\sb{1},\dotsc,\phi\sb{n})} at each vertex \w[,]{A\sb{i}}  measured counter
clockwise from \w{\overrightarrow{A\sb{i}A\sb{i+1}}} to
\w[.]{\overrightarrow{A\sb{i}A\sb{i-1}}}
The automorphism $f$ is simply a permutation $\sigma$ on \w{\{1,\dotsc, n\}} preserving
adjacency \wh that is, a cyclic shift, with or without a reverse of orientation.
If the orientation is preserved, \w[,]{N(\phi\sb{1},\dotsc,\phi\sb{n})=
  (\phi\sb{\sigma\sp{-1}(1)},\dotsc,\phi\sb{\sigma\sp{-1}(n)})}
  while if $\sigma$ reverses orientation, then  \w[,]{N(\phi\sb{1},\dotsc,\phi\sb{n})=
(-\phi\sb{\sigma\sp{-1}(1)},\dotsc,-\phi\sb{\sigma\sp{-1}(n)})} since the angles should
now be measured in the reverse direction.
\end{subsection}

\begin{example}\label{egopench}
  When \w{\Gamma=\Gkl} is an open chain of length $k$ (see Figure \ref{fmidref}
  below), \w[.]{\hCG\cong (S\sp{d-1})\sp{k-1}} If \w[,]{d=2} \w{\hC(\Gkl)}
is a \wwb{k-1}torus, parameterized by
\w[.]{(\theta\sb{1},\dotsc,\theta\sb{k-1})} The fully reduced
\cspace\ \w{\wC(\Gkl)} may be identified with a subspace of
\w{\hC(\Gkl)} defined as follows:

For \w[,]{k=2} \w[,]{\wC(\Gnl{2})=[0,\pi]\subseteq S\sp{1}} and we write
\w{\wC(\Gnl{2})'=[\pi,2\pi]} for the version of the fully reduced \cspace\
in which we require the first edge not on the $x$-axis to point downwards.

We may then define \w{\wC(\Gkl)} by induction on \w{k\geq 2} to be the subspace of
\w{(S\sp{1})\sp{k-1}} given by:
$$
(0,\pi)\times (S\sp{1})\sp{k-2}~\cup~
\{0\}\times\wC(\Gnl{k-1})~\cup~\{\pi\}\times\wC(\Gnl{k-1})'~.
$$

Thus for \w{k=3} we obtain a cylinder with each boundary component omitting half
a circle (opposite halves at either end).
\end{example}

\begin{remark}\label{rccs}
The \cspace s studied in this paper are mathematical models, which take
into account only the locations of the vertices of $\Gamma$,
disregarding possible  intersections of the edges. In the plane, there
is some justification for this, since we can allow one link to slide
over another. This is why this model is commonly used
(cf. \cite{FarbT,KMillM}; but see \cite{CDRoteS}). However, in
\w{\RR{3}} the model is not very realistic, since it disregards the
fact that rigid rods cannot pass through each other.

Note that \w{\Emb{3}{\TG}} has a dense open subspace \w{U(\TG)} consisting of
those embeddings of $V$ which induce an embedding of the full graph
(including its edges). Similarly, \w{\CG} has a dense open subspace
\w[.]{U(\Gamma):=\Emb{3}{\TG}\cap U(\TG)}
In a more realistic treatment of all configurations of $\Gamma$ in \w[,]{\RR{3}}
we must cut open \w{\CG} along the complement \w[,]{\CG\setminus U(\Gamma)} consisting
of configurations with collisions. The precise description of a ``realistic'' \cspace\
\w{\Conf(\TG)} is quite complicated, even at the combinatorial level, which is why
we work here with \w[,]{\Emb{d}{\TG}} \w[,]{\CG} and \w{\CsG} as defined in
\S \ref{dcspace}-\ref{srestr}.  We observe that even such a model \w{\Conf(\TG)}
is not completely realistic, in that it disregards the thickness of
the rigid rods. See \cite{BShvaC} for a fuller treatment of this issue.
\end{remark}

\begin{subsection}{Symmetric configurations}
\label{ssc}
When a mechanism $\Gamma$ has internal symmetries, the various
flavors of \cspace s described above may be
unnecessarily complicated: if we take into account the labelling of
the vertices, the two configurations in Figure \ref{ftwoarm} are not
equivalent even in the fully reduced \cspace\ for such a linkage,
though they may be the same from a practical point of view.
To overcome this discrepancy, consider the following notions:

A graph \w{\TG} as above has a discrete group  \w{\Aut(\TG)} of
  \emph{graph automorphisms}: the subgroup of the permutations
  \w{f:V\to V} on the vertex set $V$ which preserve the (undirected)
  edge relation. A mechanism $\Gamma$ with length function
  \w{\ell:E\to\RR{}\sb{+}} has a \emph{linkage automorphism} group
  \w[,]{\AG\subseteq\Aut(\TG)} consisting of those graph automorphisms \w{f:V\to V}
  which preserve lengths.
  The group \w{\AG} naturally acts on \w{\CG} on the right by precomposition:
  \w{\bx\mapsto \bx\circ f} (this means that we are simply \emph{relabelling} the
  vertices of the given geometric configuration $\bx$),
  and the quotient space \w{\SG:=\CG/\AG} is called the
\emph{full symmetric configuration space} of $\Gamma$. This action is not generally free.

Since the action of \w{\AG} preserves the barycenter \w{B(\bx)} of the configuration,
we define the \emph{pointed symmetric configuration space} of $\Gamma$ to be  the
subspace \w{\SsG} of \w{\SG} consisting of those equivalence classes \w{[\bx]}
with \w{B(\bx)} at the origin.

However, translating by \w{B(\bx)} yields a canonical isomorphism
\begin{equation}\label{eqpscs}
\SsG~\cong~\CsG/\AG~:=~\AG\sp{\open}\backslash\CG/\RR{d}~.
\end{equation}
\noindent (since the two actions commute). This suggests two further definitions:

The \emph{reduced symmetric configuration space} of $\Gamma$ is the quotient
\begin{equation}\label{eqrscs}
  \hSG:=\hCG/\AG:=\AG\sp{\open}\backslash\CG/\Eucp{d}\cong
  \AG\sp{\open}\backslash\CsG/\SO{d}~,
\end{equation}
\noindent  while the
\emph{fully reduced symmetric configuration space} of $\Gamma$ is defined to be
\begin{equation}\label{eqfrscs}
  \wSG~;=~\wCG/\AG~:=~\AG\sp{\open}\backslash\CG/\Euc{d}~\cong~
  \AG\sp{\open}\backslash\CsG/\Or{d}~,
\end{equation}
\noindent Neither is canonically describable as a subspace of \w[.]{\SG}
\end{subsection}

\begin{subsection}{Symmetries and normalization}\label{ssymnor}
As noted in \S \ref{snor} above, when $\Gamma$ has a rigid base platform $P$,
every configuration $\bx$ in \w{\CG} can be \emph{normalized} to a reduced configuration
\w{\hx=N(\bx)=T\sb{\bx}(\bx)} in \w[;]{\hCG} in particular, this is true
when $\Gamma$ is an equilateral polygon in the plane.

Under these assumptions, \w{\AG} acts not only on \w[,]{\CG} but also on \w[,]{\hCG}
by sending $\hx$ to \w[,]{N(\hx\circ f)} where we think of \w{\hCG} as a subspace of
\w{\CG} (but precomposition with \w{f\in\AG} may take us out of \w[).]{\hCG} We then have
\begin{equation}\label{eqshcg}
\SG~\cong~\hSG\times\Eucp{d}~:=~(\hCG/\AG)\times\Eucp{d}
\end{equation}
\noindent (see \wref[),]{eqrscs} so that
\begin{equation}\label{eqshcgo}
\SsG~\cong~\hSG\times\SO{d}~:=~(\hCG/\AG)\times\SO{d}~.
\end{equation}
\noindent (see \wref[).]{eqpscs}
\end{subsection}

\begin{remark}\label{rfullyred}
  Assume that the linkage $\Gamma$ has a rigid base platform $P$ of dimension
  $\geq d-1$ \ as
above. If no configuration $\bx$ of $\Gamma$ has its image fully
contained in an affine subspace of \w{\RR{d}} of dimension
\w[,]{d-1} we can always choose the normalization $\hx$ of $\bx$
to be ``positively oriented''. This means that we have a canonical
section \w[,]{\wCG\to\CG} so \w{\CG\cong\wCG\times\Euc{d}} and
thus \w[.]{\SG\cong\wSG\times\Euc{d}}

This will happen, for instance, if \w{d=2} and \w{\Gamma=\Gen{2k+1}} is an equilateral
polygon with an odd number of links.
\end{remark}

%
%
\section{Examples of symmetric configuration spaces}
\label{cescspaces}

We now describe a few simple examples of symmetric configuration spaces.
First, note the following:

\begin{remark}\label{rorder}
  When the graph \w{\TG} is a chain (either open or closed), any linkage $\Gamma$
  of type \w{\TG} is determined by the sequence \w{\vel:=(\ell\sb{1},\dotsc,\ell\sb{k})}
  of lengths of the consecutive links, and a pointed configuration $\bx$ for $\Gamma$ is
  thus completely determined by a sequence of vectors \w{(\vv\sb{1},\dotsc,\vv\sb{k})} in
  \w{\RR{d}}
  (with \w{\|\vv\sb{i}\|=\ell\sb{i}} for \w[),]{i=1,\dotsc,k} subject to the additional
  constraint that \w{\sum\sb{i=1}\sp{k}\,\vv\sb{i}=\vz} in the case of a closed chain
  (see \cite[\S 1.3]{FarbT}).

  We thus see that if we re-order the sequence \w[,]{\vel:=(\ell\sb{1},\dotsc,\ell\sb{k})}
  the new mechanism \w{\Gamma'} will have a canonically isomorphic configuration space.
  However, the automorphisms of $\Gamma$ need have no direct relations with those of
  \w[,]{\Gamma'} so the resulting symmetric configuration spaces may differ.
\end{remark}

\begin{subsection}{Open chains}
\label{sopench}
When \w{\TG} is an open chain of length $k$, there can be at most one non-trivial
automorphism of the corresponding linkage $\Gamma$ \wh namely, the inversion \wh if
\w{\vel:=(\ell\sb{1},\dotsc,\ell\sb{k})} is symmetric (i.e., \w{\ell\sb{i}=\ell\sb{k+1-i}}
for all \w[).]{1\leq i\leq k}

\begin{enumerate}
\renewcommand{\labelenumi}{(\roman{enumi})}
\item If \w[,]{k=d=2} with vertices \w[,]{A,B,C} the reduced \cspace\ \w{\hCG}
  is \w[,]{S\sp{1}} with parameter $\alpha$ equal to the angle from \w{\vec{BA}} to
  \w[,]{\vec{BC}}
  taken in the positive direction (and the fully reduced \cspace\ \w{\wCG} is a
  half circle).
  The \ww{C\sb{2}}-action (for symmetric $\vel$)
  switches $A$ and $C$, and therefore takes $\alpha$ to \w[.]{2\pi-\alpha} Thus we have
  two fixed points \wb[,]{\alpha=0,\pi} and \w{\hSG} is an arc (with endpoints
  corresponding to the two fixed points).
\item If \w{k=3} and \w[,]{d=2} with vertices \w[,]{A,B,C,D} then \w{\hCG} is a torus
\w{S\sp{1}\times S\sp{1}} with parameters \w{\theta=\angle ABC} and \w{\phi=\angle BCD}
  measured as above. For symmetric $\vel$, the \wwd{C\sb{2}}action
  takes \w{(\theta,\phi)} to \w[,]{(2\pi-\phi,2\pi-\theta)} with fixed points on the
  anti-diagonal \w[.]{\Delta:=\{(\theta,\phi)~|~\phi+\theta=2\pi\}} Since the action
  is generated by reflection in $\Delta$, if we think of \w{\hCG} as usual as a square
  with opposite sides identified, in \w{\hSG} we first reflect in $\Delta$ to obtain
  a triangle \w{\triangle PQR} (with \w{PR} corresponding to $\Delta$), and then
  identify \w{\vec{PQ}} with \w{\vec{QR}} to obtain a projective plane with a disc
  removed along \w{PR} \wwh that is, \w{\hSG} is a M\"{o}bius band.
\item As noted in \S \ref{egopench}, in general \w[.]{\hCG\cong
(S\sp{d-1})\sp{k-1}} When \w{k=2m+2} and \w[,]{d=2} \w{\hCG} is
parameterized by
 \w[.]{(\theta\sb{1},\dotsc,\theta\sb{m},\alpha,\phi\sb{m},\dotsc,\phi\sb{1})}
In the symmetric case, the \wwd{C\sb{2}}action sends this to
\w[,]{(2\pi-\phi\sb{1},\dotsc,2\pi-\phi\sb{m},2\pi-\alpha,
  2\pi-\theta\sb{m},\dotsc,2\pi-\theta\sb{1})} so the fixed points constitute
an
$m$-dimensional subspace with \w[.]{\alpha=0,\pi} When $k$ is odd, we have no central
parameter $\alpha$.
\end{enumerate}
\end{subsection}

\begin{subsection}{Triangles}
\label{strian}
When \w{\TG} is a closed chain of length $3$, there is only one embedding $\bx$ of any
triangle $\Gamma$ in \w[,]{\RR{2}}  up to isometry, so \w[.]{\CG\cong\Euc{2}}
\begin{enumerate}
\renewcommand{\labelenumi}{(\roman{enumi})}
\item When $\Gamma$ is scalene, \w{\Aut(\Gamma)} is trivial, so \w[.]{\SG=\CG}
\item When $\Gamma$ is an isosceles triangle, \w[,]{\Aut(\Gamma)=C\sb{2}}
and its action on
  an embedding \w{\bx:\Gamma\to\RR{2}} is equivalent to reflection in the median, so
  $$
  \SG=\Euc{2}/C\sb{2}\cong\Eucp{2}=\RR{2}\ltimes S\sp{1}
  $$
\noindent and thus \w{\SsG\cong S\sp{1}} and \w{\hSG} is a single point.
\item When \w{\Gamma=\Gen{3}} is equilateral, \w{\Aut(\Gamma)=S\sb{3}} is the full
  symmetric group, so after dividing out by the reflection we have
  \w{\Aut(\Gamma)/\{\pm 1\}=A\sb{3}} (a cyclic group of order $3$), and thus
  \w[,]{\SG=\RR{2}\ltimes (S\sp{1}/A\sb{3})} with \w{\SsG\cong S\sp{1}} again.
\end{enumerate}
\end{subsection}

\begin{subsection}{Quadrilaterals}
\label{squad}
The usual configuration spaces of planar quadrilaterals are easy
to analyze in terms of the length vector
\w[.]{\vel=(\ell\sb{1},\ell\sb{2},\ell\sb{3},\ell\sb{4})} Note
that $\vel$ must satisfy certain inequalities in order to have a
non-empty (or non-trivial) configuration space \wh e.g., if
\w[,]{\ell\sb{1}>\ell\sb{2}} we must have
\w{\ell\sb{1}-\ell\sb{2}<\ell\sb{3}+\ell\sb{4}} (see \cite[Lemma
1.4]{FarbT})

It is easy to see that \w{\AG} is non-trivial exactly in the following four cases:

\begin{enumerate}
\renewcommand{\labelenumi}{(\arabic{enumi})}
\item An equilateral quadrilateral \w{\Gamma=\Gen{4}} with
  \w[,]{\ell\sb{1}=\ell\sb{2}=\ell\sb{3}=\ell\sb{4}} \wwh in which case \w{\AG=D\sb{4}}
  (the dihedral group of order $8$).
\item Two pairs of adjacent links have equal lengths:
  \w[,]{\ell\sb{1}=\ell\sb{2}>\ell\sb{3}=\ell\sb{4}}
  \wwh in which case \w[,]{\AG=C\sb{2}}  generated by the reflection exchanging the
  equal links.
\item Each pair of opposing links has equal lengths (distinct from each other)
  \wh in which case \w[,]{\AG=C\sb{2}\times C\sb{2}}
  generated by the two interchanges of opposites.
\item Two opposite links are equal (but not all four) \wh in which case \w[,]{\AG=C\sb{2}}
  generated by the reflection exchanging the equal links.
\end{enumerate}
\end{subsection}

\begin{subsection}{Parameterizing the configuration space of a quadrilateral}
\label{spcsp}
When \w{\Gamma=ABCD}  is a quadrilateral in the plane, for generic (non-equilateral)
$\vel$, any reduced configuration $\hx$ of $\Gamma$  is determined
by the angle \w{\phi=\angle BAD} (measured counter clockwise from \w{\vec{AD}} to
\w[),]{\vec{AB}}  together with the ``elbow up''-``elbow down'' position
$\vare$ of \w{BCD} \wwh that is, whether
the \w{\beta=\angle BCD} (measured counter clockwise from \w{\vec{CB}} to \w[)]{\vec{CD}}
    satisfies \w[,]{0<\beta<\pi} in which case
  \w[,]{\vare:=+1} or \w[,]{\pi<\beta<2\pi} in which case \w[.]{\vare:=-1} If \w{\beta=0} or
  \w[,]{\alpha=\pi} we say \w{ABC} is \emph{aligned} and set \w[.]{\vare:=0} Note that this last case
  is completely determined by the value of $\phi$ (but may never occur, depending on the
  length vector \w[.]{\vel}

  In the equilateral case, when \w{\phi=0=\alpha} (that is, the edge \w{AB} coincides
  with \w{AD} and \w{CB} coincides with \w[),]{CD} we need an additional parameter,
  namely, the angle
  $\theta$ between these two collapsed intervals. This suggests that the correct way to parameterize
  \w{\hCG} (for any planar $n$-polygon) is to embed it into the $n$-torus
  \w{\bT{n}=(S\sp{1})\sp{n}} by keeping track of the angles (oriented as above) at \emph{each}
  vertex. In this situation \w{\CG} is the subspace of \w{\bT{n}} determined by the requirement that
  the open chain reduced configuration $\hx$ defined by any \w{n-1} successive
  vertex angles close up and the new angle formed is equal to the remaining vertex
  angle parameter. This description is more wasteful than the above, but is better
  suited to discussing symmetries.
\end{subsection}

\begin{subsection}{The reduced \cspace\ of a quadrilateral}
\label{socsq}
  By Remark \ref{rorder}, for the ordinary reduced \cspace\ \w{\hCG} of a quadrilateral,
  we may assume \w[.]{\ell\sb{1}\geq\ell\sb{2}\geq\ell\sb{3}\geq\ell\sb{4}}

Using the method of \cite{MTrinG}, we have the following six cases, with the reduced
\cspace\ \w{\hCG} given in \cite[\S 1.3]{FarbT}:

\begin{enumerate}
\renewcommand{\labelenumi}{(\roman{enumi})}
\item \w{\ell\sb{2}<\ell\sb{1}<\ell\sb{2}+\ell\sb{3}+\ell\sb{4}}  and
  \w[,]{\ell\sb{2}+\ell\sb{3}<\ell\sb{1}+\ell\sb{4}} with \w[.]{\CG\cong S\sp{1}}

  Here \w{\AG\neq\{1\}} if and only if
  \w{\ell\sb{2}=\ell\sb{3}} or \w{\ell\sb{3}=\ell\sb{4}} (opposing) or
    any linkage for which \w[.]{\ell\sb{1}>\ell\sb{2}=\ell\sb{3}=\ell\sb{4}}
\item \w{\ell\sb{2}<\ell\sb{1}<\ell\sb{2}+\ell\sb{3}+\ell\sb{4}}  and
  \w[,]{\ell\sb{2}+\ell\sb{3}=\ell\sb{1}+\ell\sb{4}} with \w[.]{\CG\cong S\sp{1}\vee S\sp{1}}

  Here \w{\AG\neq\{1\}} if and only if \w{\ell\sb{2}=\ell\sb{3}} (opposing).
\item \w{\ell\sb{1}>\ell\sb{2}} and \w[,]{\ell\sb{2}+\ell\sb{3}>\ell\sb{1}+\ell\sb{4}}
  with \w[.]{\CG\cong S\sp{1}\amalg S\sp{1}}

   Again \w{\AG\neq\{1\}} if and only if \w{\ell\sb{2}=\ell\sb{3}} (opposing).
 \item \w[,]{\ell\sb{1}=\ell\sb{2}\geq\ell\sb{3}>\ell\sb{4}}
   with \w[.]{\CG\cong S\sp{1}\amalg S\sp{1}}

   Here \w{\AG\neq\{1\}} if and only if \w{\ell\sb{1}} is opposite \w[.]{\ell\sb{2}}
 \item \w[,]{\ell\sb{1}=\ell\sb{2}>\ell\sb{3}=\ell\sb{4}} with
   \w{\hCG} as in Figure \ref{fparallelogram}.

   Here \w{\AG=C\sb{2}} or \w{C\sb{2}\times C\sb{2}} in cases (2) or (3) respectively.
 \item \w[,]{\ell\sb{1}=\ell\sb{2}=\ell\sb{3}=\ell\sb{4}}
   with \w{\CG\cong S\sp{1}\vee S\sp{1}\vee S\sp{1}\vee S\sp{1}} and \w[.]{\AG=D\sb{4}}
\end{enumerate}
\end{subsection}

\begin{notation}\label{nisoctrap}
  For a quadrilateral \w{\Gamma=\Box ABCD} we write \w[,]{p:=AB} \w[,]{q:=BC} \w[,]{r:=CD}
  and \w{s:=DA} for the four values of $\vel$ in order, and assume without loss of
  generality that \w{s=\ell\sb{1}} is the longest.
\end{notation}

\begin{theorem}\label{tscsq}
In the notation of \S \ref{nisoctrap}, the symmetric \cspace\ \w{\hSG}
of any quadrilateral $\Gamma$ is:

\begin{enumerate}
\renewcommand{\labelenumi}{(\roman{enumi})}
\item A  closed interval, if \w[,]{s>p,q,r} \w[,]{p=r} \w[,]{s<p+q+r} and
  \w[;]{s+q>2p}
\item A circle, if \w[,]{s>p,q,r} \w[,]{p=r} \w[,]{s<p+q+r} and
  \w[,]{s+q\leq 2p}  or if \w[.]{s=q\geq p>r}
\item A wedge of a circle and a segment, if \w{s=q> p=r} (a parallelogram).
\item A circle with its diameter, if \w{s=p>q=r} (a deltoid).
\item A closed interval, if \w{s=p=q=r} (an equilateral quadrilateral).
\end{enumerate}
\end{theorem}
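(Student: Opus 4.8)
The plan is to compute \w{\hSG=\hCG/\AG} directly in each of the five cases, combining the explicit model of \w{\hCG} recorded in \S\ref{socsq} with the description of \w{\AG} from \S\ref{squad}. In the notation of \S\ref{nisoctrap} the sides in cyclic order are \w[,]{p,q,r,s} with opposite pairs \w{\{p,r\}} and \w[;]{\{q,s\}} matching the hypotheses of each case against \S\ref{squad} pins down the group: cases (i) and (ii) are the ``one equal opposite pair'' situation with \w[,]{\AG=C\sb{2}} case (iii) is the parallelogram with \w[,]{\AG=C\sb{2}\times C\sb{2}} case (iv) is the deltoid with \w[,]{\AG=C\sb{2}} and case (v) is the equilateral \w{\Gen{4}} with \w[.]{\AG=D\sb{4}} Every computation then rests on the normalization rule of \S\ref{snor}: an orientation-preserving automorphism permutes the vertex angles, whereas an orientation-reversing one permutes \emph{and} negates them. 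Hence each reflection in \w{\AG} acts on the angle coordinates of \w{\hCG} as an orientation-reversing involution whose fixed points are exactly the configurations geometrically symmetric under that automorphism.

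First I would treat the two ``opposite pair equal'' cases together. Here \w{\AG=C\sb{2}} is generated by the reflection exchanging the equal opposite links: for \w{p=r} it swaps \w{A\leftrightarrow D} and \w[,]{B\leftrightarrow C} and for \w{q=s} it swaps \w{A\leftrightarrow B} and \w[.]{C\leftrightarrow D} The decisive question is whether this involution has fixed points \wh equivalently, whether a symmetric isosceles-trapezoid configuration exists \wh and this is exactly what the inequality \w{s+q>2p} records: it separates case (i) of \S\ref{socsq} from cases (ii)--(iii). When \w[,]{s+q>2p} we are in case (i), so \w[,]{\hCG\cong S\sp{1}} the involution reverses this circle with two fixed points, and the quotient is a closed interval. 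When \w[,]{s+q\leq2p} \w{\hCG} is a wedge or a disjoint union of two circles (\S\ref{socsq}(ii)--(iii)), the reflection interchanges the two branches, and the quotient collapses to a single \w[.]{S\sp{1}} The sub-case \w{s=q\geq p>r} lands instead in \S\ref{socsq}(iv) with \w[,]{\hCG\cong S\sp{1}\amalg S\sp{1}} and the reflection again swaps the two circles, again giving a circle.

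For the parallelogram (iii) and deltoid (iv) I would exploit Remark \ref{rorder}: the two linkages share a length multiset, so both have the same \w{\hCG} \wwh the space of Figure \ref{fparallelogram}, namely two circles (the parallelogram and anti-parallelogram branches) joined at the two collinear configurations. The work is to compute the two different group actions on this fixed space. For the parallelogram one determines how each of the three non-trivial elements of \w{C\sb{2}\times C\sb{2}} (the two axis-reflections and their product, the \wwd{\pi}rotation) permutes the two branches and acts on the two nodes; folding accordingly yields a wedge of a circle and a segment. For the deltoid the single reflection \wb{B\leftrightarrow D} has a one-dimensional fixed locus (the symmetric kites) meeting both nodes, and the quotient is a circle together with a diameter. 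Finally, in the equilateral case (v) one uses \S\ref{socsq}(vi), where \w{\hCG} is a wedge of four circles: the cyclic subgroup \w{C\sb{4}\subset D\sb{4}} permutes the four circles and folds them to one, after which the residual reflection acts as an involution with two fixed points, so \w{\hSG} is a closed interval.

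The main obstacle, I expect, is the bookkeeping in the two cases where \w{\hCG} is not a manifold \wh the two-branch space of Figure \ref{fparallelogram} in (iii)--(iv) and the four-fold wedge in (v) \wh where one must track exactly how each generator permutes the branches and whether it fixes or exchanges the singular points. Getting the fixed-point count right, and matching it against the realizability inequalities (so that, for instance, the interval endpoints in (i) and (v) correspond to genuinely existing symmetric configurations), is the delicate step; once the action on \w{\hCG} is pinned down, the smooth single-circle cases follow at once from the classification of involutions of \w[.]{S\sp{1}} The detailed verifications are carried out in Appendix \ref{cquad}.
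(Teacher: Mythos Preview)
Your overall strategy matches the paper's Appendix: analyse the \wwd{\AG}action on \w{\hCG} case by case and read off the quotient. But there is a recurring slip in identifying the fixed points. You claim that a reflection automorphism fixes the ``geometrically symmetric'' configurations (isosceles trapezoids in (i)--(ii), symmetric kites in (iv)). That would be correct in \w[,]{\wCG} but in \w{\hCG} one has only divided by \w[:]{\SO{2}} a point is fixed under an orientation-reversing \w{f\in\AG} only when the relabelled configuration is a \emph{rotation} of the original, and a trapezoid or non-degenerate kite is merely the reflection of its relabelling. Lemma \ref{lisoctrap} shows that the two fixed points in case (i) are instead the centrally symmetric crossed configurations, arising from an auxiliary parallelogram \w{ABDC''} with side \w{p} and diagonals \w{q} and \w[,]{s} and it is the existence of \emph{that} parallelogram (not of a trapezoid, which always exists under these hypotheses) that the dichotomy \w{s+q>2p} versus \w{s+q\leq 2p} controls. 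Likewise in Lemma \ref{lrhombus} the fixed arcs consist of the degenerate configurations with \w[,]{B=D} not the kites. Your topological conclusions happen to survive, but the geometric identifications need correcting before the argument is sound.

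Your equilateral case (v) is also too quick: \w{\hC(\Gen{4})} is only homotopy equivalent to a wedge of four circles. As Lemma \ref{lsquare} shows, it is actually three circles meeting pairwise at three singular points \w[,]{G,H,L} and the rotation \w{R} sends the rhombus circle \w{xx'yy'} to itself while cyclically permuting the four degenerate arcs \w[.]{u,z,v,w} The quotient is still an interval, but the shortcut ``\w{C\sb{4}} permutes the four circles'' does not actually compute it.
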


Since the proof of Theorem \ref{tscsq} requires checking many special cases,
it is relegated to Appendix \ref{cquad}.

\begin{remark}\label{rredfullsym}
The fully reduced symmetric \cspace\ \w{\wSG} is generally different from \w[.]{\hSG}
However, for the equilateral quadrilateral \w[,]{\Gamma=\Gen{4}} they are the identical,
in fact. This is because the fully reduced \cspace\ \w{\wCG} is the upper half of
the reduced \cspace\ \w{\hCG} (see Figure \ref{fsquare}), and \w{\wSG} is then obtained
from \w{\wCG} by further identifying the right and left hand sides.

Intuitively, this is because an abstract (i.e., unlabelled)
configuration for a rhombus has no distinguished orientation
(unlike a scalene quadrilateral, where all four angles are
generally distinct, so can be oriented in the positive direction).
\end{remark}

%
%
\section{General planar polygons}
\label{cgppoly}

When \w{\TG} is a polygon with more than four edges, the analysis we made above
(and in Appendix \ref{cquad}) becomes more complicated, and the number of cases too
large for a full description to be useful.
However, it may still be possible to say something about the cell structure of
the \cspace\ \w{\CG} for the various mechanisms $\Gamma$ of type \w[.]{\TG}
Moreover, the diffeomorphism type of \w{\CG} (as a manifold with singularities)
depends generically only on inequalities among sums of subsets of $\vel$
(see \cite[Theorem 1.1]{HRodrS}).

In this section, we recall a classical approach to such cell structures.

\begin{subsection}{Arrow diagrams}
\label{sparamcs}
The pointed \cspace\ \w{\CsG} of a closed $n$-chain
\w{\Gamma=\Gkc{n}} in \w[,]{\RR{2}} with length vector $\vel$, may
be parameterized by points \w{\vth} in the $n$-torus
\w[,]{\bT{n}=(S\sp{1})\sp{n}} where for a configuration
\w[,]{\bx\in\wCG\subset\Cs(\Gamma)}
\w{\vth(\bx)=(\theta\sb{1},\dotsc,\theta\sb{n})} is the vector of
angles \w{\theta\sb{i}} between \w{\bx(e\sb{i})} and the positive
$x$-axis. This encodes the one-to-one correspondence between a
configuration $\bx$ of $\Gamma$ and the \emph{arrow diagram}
obtained by moving all vectors \w{\bx(e\sb{i})} to the origin. See
Figure \ref{farrowd}, where (a) is a configuration of a pentagon,
and (b) is the corresponding arrow diagram.

%
%
\begin{figure}[ht]
\centering{\includegraphics[width=0.7\textwidth]{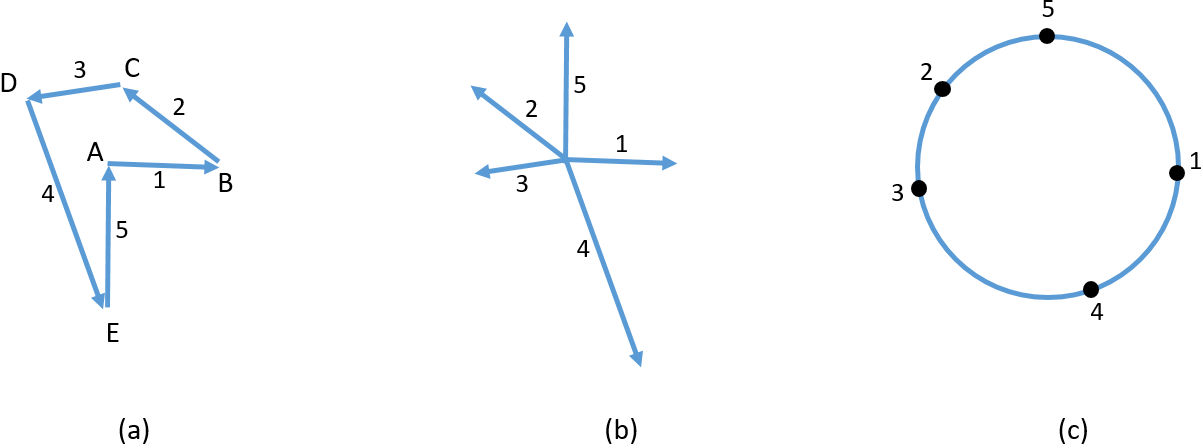}}
\caption{Arrow diagram}
\label{farrowd}
\end{figure}

Of course, not every value of \w{\vth\in(S\sp{1})\sp{n}} is allowed \wh
we have two constraints,
\begin{equation}\label{eqsumsin}
  \sum_{i=1}\sp{n}\ \ell\sb{i}\cos\theta\sb{i}~=~0\hsp\text{and}\hsp
  \sum_{i=1}\sp{n}\ \ell\sb{i}\sin\theta\sb{i}~=~0~,
\end{equation}
\noindent to ensure that the chain is closed.

The reduced configuration space is then \w[,]{\hCG=\CsG/\SO{2}}
with the circle \w{\SO{2}} acting by rotating a configuration
about the origin (as in \S \ref{srestr}). It may be parameterized
by \w[,]{\vth\in\bT{n-1}} since we fix \w[.]{\theta\sb{1}=0} In
order for $\bx$ to be fully reduced, we also require that the
first \w{\theta\sb{i}} \wb{i\geq 2} which is not $0$ or $\pi$ must
be \w[.]{<\pi}
\end{subsection}

\begin{subsection}{Cells in the torus}
\label{scelltor}
The torus \w{\bT{n}} is decomposed into $n$-dimensional cells by (the images of)
the hyperplanes in \w{\RR{n}} given by conditions of the form
\w{\theta\sb{i}=\theta\sb{i+1}}
or \w{\theta\sb{i}=\theta\sb{i+1}+\pi} for \w[.]{1\leq i\leq n} In any connected
component $E$ of the complement of these hyperplanes, we may use any
consecutive \w{n-3} of the parameters \w{(\theta\sb{1},\dotsc,\theta\sb{n})} as
local coordinates for \w{\hCG} (cf.\ \cite[Theorem 1.3]{FarbT}).
However, we have no control over the intersection of $E$ with \w[,]{\hCG} which may
no longer be connected, for example, so we follow the approach of \cite{KMillM},
in the formulation of \cite{PaninM}, to describe a regular cell structure on
\w[.]{\hCG}

For this purpose it is convenient to use the homeomorphism \w{\phi:S\sp{1}\to\hR}
given by \w[,]{\phi(\theta)=\tan(\theta/2)} where \w{\hR:=\RP{1}\cong\R\cup{\infty}}
is the real projective line. This defines a
coordinate-wise identification \w{\Phi:\bT{n}\to\hR\sp{n}} (the $n$-fold product),
with
\begin{equation}\label{eqtphi}
  \vt~=~(t\sb{1},\dotsc,t\sb{n})~=~\Phi(\vth)~:=~
  (\phi(\theta\sb{1}),\dotsc,\phi(\theta\sb{n}))~.
\end{equation}
Moreover, since \w[,]{\phi(0)=0} the image of \w{\hCG\subseteq\CsG\subseteq\bT{n}} under
$\Phi$ is contained in \w[.]{\{0\}\times\hR\sp{n-1}}

Note that the projective special linear group \w{\PSL{2}} acts by M\"{o}bius
transformations on $\hR$, and thus diagonally on \w[.]{\hR\sp{n}}
It turns out that the orbit space \w{\hR\sp{n}/\PSL{2}} is isomorphic to \w{\hCG} by
\cite[Theorem 4]{KMillM} (which is stated in terms of stable measures on
\w[,]{S\sp{1}} allowing one to conformally transform any arrow diagram $\vth$ into
one with vector sum at the origin).
\end{subsection}

\begin{subsection}{Cells for the \cspace\ of a polygon}
\label{scellcs}
As in \cite[\S 1]{PaninM}, we first note that for
\w{\Gamma=\Gkc{n}} an $n$-polygon (in \w[),]{\RR{2}} with length
vector $\vel$, we have a dense open subset \w{\CsoG} of \w{\CsG}
consisting of those arrow diagrams $\vth$ with all angles
distinct. To each such $\vth$ we associate a \emph{cyclic
ordering} \w{\alpha(\vth)} of the arrows, labelled by
\w[,]{\{1,\dotsc,n\}} on the circle: that is, a coset of the
symmetric group \w{S\sb{n}} modulo the left action of the subgroup
\w{C\sb{n}} generated by the cyclic permutation \w[.]{(2,3,\dotsc,
n,1)} This coset is obtained from the given labelling of the
arrows by the edges \w{(e\sb{1},\dotsc,e\sb{n})} of \w{\Gkc{n}} by
selecting an arbitrary starting point, and the action of
\w{C\sb{n}} corresponds to choosing a new starting point. See (c)
in Figure \ref{farrowd}.

Since this process respects the \wwd{\SO{2}}action on a pointed configuration
\w[,]{\bx\in\CsG} and thus on \w[,]{\vth(\bx)} the map
\w{\alpha:\CsoG\to S\sb{n}/C\sb{n}} descends to \w[,]{\halpha:\hCoG\to S\sb{n}/C\sb{n}}
where \w{\hCoG} is the corresponding dense open subset of the reduced \cspace\ \w[.]{\hCG}

The preimage under $\halpha$ of each coset \w{[\sigma]\in
S\sb{n}/C\sb{n}} is an open cell \w{\hE\sb{\sigma}} in \w[.]{\hCG}
Any two such cells are homeomorphic under a relabelling of the
arrows, so we may concentrate on the cell \w{\hE\sb{\Id}}
corresponding to the identity permutation: thus \w{\hE\sb{\Id}}
consists of strictly convex configurations of $\Gamma$ (in the
upper half plane).

To see that \w{\hE\sb{\Id}} is in fact a regular cell (that is, homeomorphic to a ball,
with a regular cell structure on its boundary), we proceed as follows
(see \cite[Lemma 1.2]{PaninM}):

Note that \w{\Phi:\bT{n}\to\hR\sp{n}} preserves the cyclic order of the coordinates:
i.e., \w{\theta\sb{i}\prec\theta\sb{j}\prec\theta\sb{k}} in the positive direction on
\w{S\sp{1}} if and only if \w{t\sb{i}\prec t\sb{j}\prec t\sb{k}} in $\hR$, and this
cyclic order on $\hR$ is preserved by M\"{o}bius transformations.
Moreover, if \w{\hE=\hE\sb{\sigma}} is any open cell in \w[,]{\hCG\subseteq\bT{n}}
any \w{\vth\in\hE} has pairwise distinct coordinates, so the same is true of
the set \w[.]{\hE':=\Phi(\hE)} By standard facts about M\"{o}bius transformations,
for each \w{\vt=\Phi(\vth)\in\hE'} there is a unique \w{\Pc\sb{\vt}\in\PSL{2}}
such that \w[,]{\Pc\sb{\vt}(t\sb{1})=\infty} \w[,]{\Pc\sb{\vt}(t\sb{2})=0} and
\w[.]{\Pc\sb{\vt}(t\sb{n})=1} It is given by:
\begin{equation}\label{eqmoebius}
w~=~\Pc(t)~:=~\frac{t-t\sb{2}}{t-t\sb{1}}\cdot\frac{t\sb{n}-t\sb{1}}{t\sb{n}-t\sb{2}}~,
\end{equation}
\noindent which simplifies when \w{t\sb{1}=0} to
\w[,]{\Pc(t)~:=~\frac{t\sb{n}(t-t\sb{2})}{t(t\sb{n}-t\sb{2})}} with inverse
\begin{equation}\label{eqmoebiusinv}
t~=~\Pc\sp{-1}(w)~:=~\frac{t\sb{2}t\sb{n}}{(t\sb{2}-t\sb{n})w+t\sb{n}}~.
\end{equation}
Moreover, the correspondence \w{\bt\mapsto\Pc\sb{\vt}} is continuous and one-to-one.

Thus we have a map \w[,]{\Pc:\hE'\to\hR\sp{n}} defined by
\begin{equation}\label{eqmoebiusp}
\Pc(\vt)~=~~(\infty,0,\Pc\sb{\vt}(t\sb{3}),\dotsc,\Pc\sb{\vt}(t\sb{n-1}),1)
\end{equation}
\noindent which is a bijection onto its image \w[.]{\hE''} Because \w{\Pc\sb{\vt}}
preserves the ordering of the coordinates in $\hR$, we see that for
\w[,]{\hE=\hE\sb{\Id}} \w{\hE''} is isomorphic to the affine cell
\begin{equation}\label{eqaffinecell}
\{\vvs=(s\sb{3},\dotsc,s\sb{n-1})\in\RR{n-3}~|\ 0<s\sb{3}<s\sb{4}<\dotsc<s\sb{n-1}<1~\}~.
\end{equation}
\noindent Since for any cyclic permutation $\sigma$, \w{\hE\sb{\sigma}} is isomorphic
to \w[,]{\hE\sb{\Id}} it is also an affine cell under the appropriate identifications.

The top cells in \w{\partial\hE} correspond to the
various cases where two adjacent arrows in $\vth$ coincide, in which case the
corresponding configuration \w{\bx=\bx(\vth)} is a strictly convex configuration for
a closed polygon with \w{n-1} edges, and with length vector \w{\vel'} such that
\w{\ell'\sb{i}=\ell\sb{i}} for
\w[,]{1\leq i\sb{0}} \w{\ell'\sb{i\sb{0}}=\ell\sb{i\sb{0}}+\ell\sb{i\sb{0}+1}} and
\w{\ell'\sb{i}=\ell\sb{i+1}} for \w[.]{i\sb{0}<i<n} Similarly by recursion for the
remaining cells in \w[.]{\partial\hE''}

In almost all cases these top cells correspond to the analogous boundary cells of
\w{\hE''} with equalities in \wref[.]{eqaffinecell} The cases where
\w[,]{\theta\sb{1}=\theta\sb{2}}
say, are treated by changing our choice of which coordinates of $\vt$ are sent to
\w{(\infty,0,1)} in \wref[.]{eqmoebiusp}
\end{subsection}

\begin{subsection}{Cells for the fully reduced \cspace}
\label{scfrcs}
For the fully reduced case, note that in \w{\CsoG} we can also associate to an
arrow diagram $\vth$ a coset \w{\beta(\vth)} of the
symmetric group \w{S\sb{n}} modulo the left action of the dihedral subgroup
\w[,]{D\sb{n}} which acts on a labelling of the arrows in $\vth$ by varying both the
starting point and the direction in which we proceed. Since \w[,]{C\sb{n}<D\sb{n}}
we have a surjection \w[,]{\pi:S\sb{n}/D\sb{n}\to S\sb{n}/C\sb{n}} with
\w[.]{\alpha=\pi\circ\beta} We call \w{\beta(\vth)} the \emph{dihedral ordering}
associated to $\vth$.

Again, this process respects the \wwd{\Or{2}}action on a pointed configuration
\w[,]{\bx\in\CsG} and thus on \w[,]{\vth(\bx)} so \w{\beta:\CsoG\to S\sb{n}/D\sb{n}}
induces \w[,]{\hbeta:\wCoG\to S\sb{n}/C\sb{n}} where \w{\wCoG} is the corresponding
dense open subset of the fully reduced \cspace\ \w[.]{\wCG}

Note that we have a trivial double covering map \w[,]{\delta\sp{o}:\hCoG\to\wCoG} although
the corresponding map \w{\delta:\hCG\to\wCG} has branch points at fully aligned
configurations of $\Gamma$ (if they exist, as for even equilateral polygons).

Thus the preimage under $\hbeta$ of each coset \w{[\sigma]\in S\sb{n}/D\sb{n}}
is again an open cell \w{\wE\sb{\sigma}} in \w[,]{\wCG} doubly covered by
\w[,]{\hE\sb{\sigma'}\amalg\hE\sb{\sigma''}} where
\w[.]{\{[\sigma'],[\sigma'']\}=\pi\sp{-1}[\sigma]} We must be more careful in analyzing
the boundary \w[,]{\partial\wE\sb{\sigma}} since \w{\delta:\hCG\to\wCG} may have
branch points there.
\end{subsection}

%
%
\section{Automorphisms of planar polygons}
\label{capp}

In Section \ref{cgppoly} we summarized briefly the standard
approach to describing the cell structure of the (fully) reduced
\cspace s of polygons in the plane. We now turn to a finer cell
structure needed to analyze the symmetric \cspace s. First, we
note a straightforward result about \w[,]{\AG} for closed chains
$\Gamma$:

\begin{definition}\label{dautograph}
  Let $\wel$ be the cyclic word in $n$ positive real numbers corresponding to the (ordered)
  length vector $\vel$. The maximal number \w{k\geq 1} of repeating segments $I$ into which
  $\wel$ can be divided will be called the \emph{order} of $\vel$ (so \w[).]{k|n}

If $\wel$ is symmetric (with respect to reversing the order from a certain starting point),
we call $\vel$ \emph{palindromic}. More generally, if after possibly omitting a single
length \w{\ell\sb{i}} from $\wel$ it becomes palindromic, we say that $\vel$ is
\emph{reflective}. Thus \w{\wel=(1,2,3,2,1)} is palindromic, while
\w{(1,2,3,2,1,4)} is reflective.
\end{definition}

\begin{lemma}\label{pautograph}
Given a length vector $\vel$ a closed chain \w{\Gamma=\Gkc{n}} of length $n$,
  the automorphism group of the linkage $\Gamma$ is
$$
\AG~\cong~\begin{cases}
  D\sb{k} & \text{if}~ \vel~\text{is reflective}\\
  C\sb{k} & \text{if}~ \vel~ \text{is not reflective,}
  \end{cases}
$$
\noindent where \w[,]{D\sb{n}} the dihedral group of order \w[,]{2n} is the group of
symmetries of the equilateral $n$-gon \w[,]{\Gen{n}} so in particular
\w{D\sb{2}:=C\sb{2}} and \w[.]{C\sb{1}=\{1\}}
\end{lemma}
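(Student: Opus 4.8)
The plan is to identify $\AG$ with a group of permutations of the vertices of the closed chain $\Gamma=\Gkc{n}$, and to translate ``preserves adjacency and length'' into a purely combinatorial condition on the cyclic word $\wel$. Since $\Gamma$ is a closed $n$-chain, its underlying graph $\TG$ is an $n$-cycle, so $\Aut(\TG)$ is exactly the dihedral group $D\sb{n}$ acting on the vertices $\{1,\dotsc,n\}$ (arranged on a circle): the rotations generated by $(1,2,\dotsc,n)$ together with the reflections. A graph automorphism $f$ lies in $\AG$ precisely when it additionally preserves edge lengths, i.e.\ when the induced permutation of edges fixes the length-labelling $\wel$. First I would make this reduction explicit, so that the question becomes: which elements of $D\sb{n}$ preserve the cyclic word $\wel$ of edge lengths?

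\textbf{The rotational part.} Next I would analyze the rotations in $\AG$. A rotation by $j$ preserves $\wel$ exactly when $\wel$ is invariant under a cyclic shift by $j$, which happens if and only if $\wel$ decomposes into $n/\gcd(n,j)$ identical consecutive blocks. By Definition \ref{dautograph}, the maximal number of such repeating segments is the \emph{order} $k$ of $\vel$, and $k\mid n$; the rotations preserving $\wel$ are exactly those by multiples of $n/k$, forming a cyclic group $C\sb{k}$. This gives the rotational subgroup of $\AG$ in all cases, and already yields the answer $C\sb{k}$ once I show no reflections occur in the non-reflective case.

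\textbf{The reflections.} The heart of the argument is to count the length-preserving reflections and connect them to the ``reflective'' condition. A reflection of the $n$-cycle fixes the cyclic word $\wel$ of lengths exactly when $\wel$, read around the circle, is palindromic about the reflection's axis. I would argue that whether such a reflection exists depends only on the order-$k$ block structure: if one reflection preserving $\wel$ exists, composing with the $C\sb{k}$ rotations produces exactly $k$ of them, upgrading $C\sb{k}$ to $D\sb{k}$; if none exists, $\AG=C\sb{k}$. The remaining task is to verify that a length-preserving reflection exists if and only if $\vel$ is reflective in the sense of Definition \ref{dautograph}. Here I must handle the axis carefully: a reflection of an $n$-cycle either passes through two opposite vertices or through two opposite edge-midpoints, and ``palindromic after possibly omitting a single length $\ell\sb{i}$'' is precisely the combinatorial encoding that covers both axis types (the omitted entry corresponds to the edge or vertex lying on the axis). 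I expect this matching of the two reflection geometries against the single vs.\ no-omission cases of the ``reflective'' definition to be the main obstacle, since one must check the parity bookkeeping uniformly over all $k\mid n$.

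\textbf{Assembling and degenerate cases.} Finally I would combine the two parts: $\AG$ contains $C\sb{k}$ always, and is $D\sb{k}$ precisely when a length-preserving reflection exists, i.e.\ when $\vel$ is reflective. To finish I would check the small-group conventions stated in the lemma — namely $D\sb{2}:=C\sb{2}$ and $C\sb{1}=\{1\}$ — so that the formula reads correctly when $k=1$ (no nontrivial rotation, and reflectivity detects a single mirror symmetry giving $D\sb{1}\cong C\sb{2}$) or $k=2$, confirming the stated isomorphism in every case.
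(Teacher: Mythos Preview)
Your approach is essentially the paper's own: identify the rotational subgroup as $C\sb{k}$ via the block structure of $\wel$, then show that a length-preserving reflection exists precisely in the reflective case and that composing one such reflection with $C\sb{k}$ yields the full $D\sb{k}$. The paper's proof is terser but follows exactly this line, enumerating the same axis types.

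One correction to make when you carry this out: your claim that ``a reflection of an $n$-cycle either passes through two opposite vertices or through two opposite edge-midpoints'' is only true for even $n$. For odd $n$ every reflection axis passes through one vertex and the midpoint of the opposite edge, which is the third case the paper singles out. Correspondingly, the match with Definition~\ref{dautograph} is: a vertex--vertex axis corresponds to $\wel$ being palindromic with no omission (even $n$); a midpoint--midpoint axis corresponds to omitting the edge on the axis (even $n$); and a vertex--midpoint axis corresponds to the odd-$n$ palindromic case. With this threefold case split in place your plan goes through.
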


\begin{proof}
  If \w{k\geq 2} and $\vel$ is non-reflective, rotation along the repeated segment
  $I$ generates \w[.]{\AG=C\sb{k}}

If $\vel$ is reflective, we have in addition a reflection in an axis which either
connects two opposite vertices $u$ and $v$ (if $n$ is even and $\vel$ is palindromic
starting at $u$); connects a vertex to the midpoint of an edge (if $n$ is odd
and $\vel$ is palindromic); or connects the midpoints of two opposite edges $e$ and $f$
(if $n$ is even and $\vel$ is palindromic after dropping $e$, say).
If also \w[,]{k\geq 2} we have a total of $k$ possible reflections of this form.
\end{proof}

\begin{subsection}{$\AG$-cells}
\label{sagcell}
The key to understanding the symmetric configuration space is using appropriate cells
for the description of the reduced \cspace\ \w[,]{\hCG} and its fully reduced version
\w[,]{\wCG} which take into account the \wwd{\AG}action.  More precisely, we decompose
\w{\hCG)} (or \w[)]{\wCG} into open \emph{free} $H$-cells for each subgroup $H$
of \w{\AG} \wwh that is, open cells of the form \w[,]{\be\sp{n}\times H} with the
$H$ acting on the second coordinate (see \cite{MayEHC}).

The highest dimensional cells in this decomposition (with
\w[)]{\dim(\CX)=n-3} will be those for which the action of \w{\AG}
is free (in the interior). This simply reflects the fact that
generic configurations have no symmetries, if \w{n\geq 5} (which
fails for the equilateral quadrilateral, as we see in Lemma
\ref{lsquare}). Note, however, that the action may take an open
cell \w{\hE\sb{\sigma}} to itself; to avoid this, we must
subdivide it into finer (open) cells which are permuted among
themselves by \w{\AG} (acting under relabelling combined with
normalization back to the reduced form).

These fine cells are determined by ``breaking the symmetry'' of the corresponding
arrow diagram \wh that is, imposing an additional (open) condition which determines a
unique ``canonical'' labelling of the vertices.
\end{subsection}

\begin{remark}\label{rlattice}
The subgroup lattice of \w{\AG} plays a central role in our analysis
of the equivariant cell structure. Note, however, that for certain subgroups $H$
of \w[,]{\AG} any configuration stabilized by $H$ are in fact invariant under a larger
subgroup.  Thus for example when \w{\Gamma=\Gen{6}} is the equilateral hexagon, the
three subgroups shaded in the lattice of subgroups of \w{\AG=D\sb{12}} in
Figure \ref{fsubgplat} will never appear as stabilizers.
%
%
\begin{figure}[ht]
\centering{\includegraphics[width=0.5\textwidth]{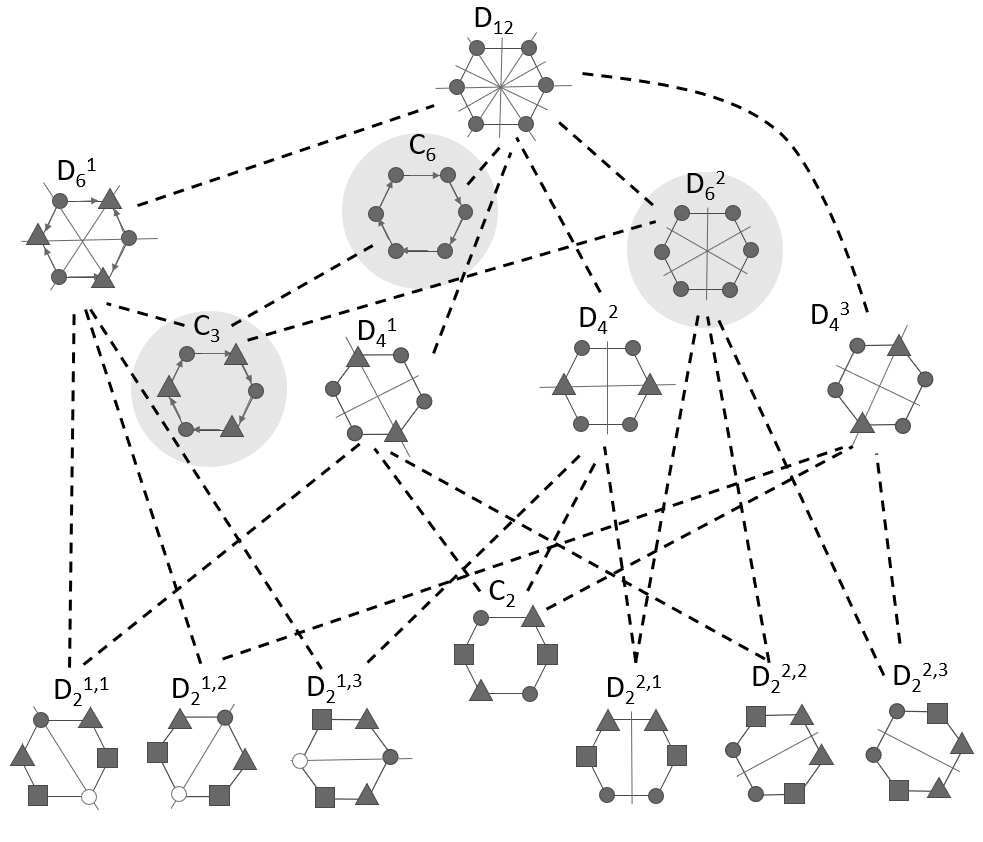}}
\caption{Subgroup lattice of $D\sb{12}$}
\label{fsubgplat}
\end{figure}
\end{remark}

\begin{definition}\label{dautsigma}
Consider an open cell \w{\wE\sb{\sigma}} for \w{\wCG} corresponding to a
cyclic (or dihedral) ordering \w{[\sigma]} in \w{S\sb{n}/C\sb{n}}
(or \w[)]{S\sb{n}/D\sb{n}} \wh see \S \ref{scfrcs}.

It is convenient to think of our geometric representation of the
abstract cyclic (or dihedral) ordering \w{[\sigma]\in
S\sb{n}/D\sb{n}} as a certain (fully) reduced configuration of a
mechanism \w{\Delta\sb{n}} consisting of $n$ unit vectors
emanating from the same point in the plane. Since we are only
interested in the cyclic ordering, we choose a normalized
representative \w{c\sb{\sigma}} in \w{\hC(\Delta\sb{n})}
(respectively,  \w[)]{\wC(\Delta\sb{n})} for \w{[\sigma]} in which
the arrow heads lie at the cyclotomic points
\w{c\sb{k}=\frac{2(k-1)\pi}{n}} \wb[,]{k=1,\dotsc,n} suitably
labelled, with \w[.]{\sigma(1)=1}

The action of \w{\psi\in\AG\subseteq D\sb{n}}
is by changing the labelling \w{\sigma(k)} of \w{c\sb{k}} to \w[,]{\psi(\sigma(k))}
and then renormalizing by applying a cyclic shift to \w{\psi\circ\sigma} to obtain a new
permutation \w{\sigma'} with \w{\sigma'(1)=1} (the label of \w[),]{c\sb{1}}
for a cyclic ordering.

In the case of a dihedral ordering, we further normalize by
requiring that \w{\sigma'(2)} appear as the label of a cyclotomic point in the upper
half-plane (unless it is at \w[,]{c\sb{n/2}=\pi}, in which case \w{\sigma'(3)} must be in the
upper half-plane).

We denote the subgroup of \w{\AG} fixing a given cyclic (or dihedral) ordering
\w{[\sigma]} by \w[.]{\Aut([\sigma])} Thus \w{\psi\in\Aut([\sigma])} if and only if
\w[.]{\sigma'=\sigma}
\end{definition}

\begin{subsection}{Fine cells}
\label{sfinecell}
Note that \w{\AG} acts freely on the dense open subspace \w[,]{\wCoG} but
\w{\Aut([\sigma])} takes the open cell \w{\wE\sb{\sigma}} to itself. Thus the
cardinality $N$ of \w{\Aut([\sigma])} is the number of fine cells into which we
must divide \w[.]{\wE\sb{\sigma}}

To specify such a fine cell, we think of \w{\Aut([\sigma])} as a subgroup of
the cyclic group \w{C\sb{n}} (or the dihedral group \w[,]{D\sb{n}} in the fully reduced
case), now acting in the standard way on the regular $n$-gon (or the cyclotomic points
on the circle). Each orbit under this action imposes a (different) partition of
\w{\wE\sb{\sigma}} into fine cells, defined as follows:

\begin{enumerate}
\renewcommand{\labelenumi}{(\alph{enumi})}
\item If \w{\Aut([\sigma])} is cyclic of order $N$, generated by
$$
  \psi\in\Aut([\sigma])~\subseteq~\Aut(\Gamma)~\subseteq~D\sb{2n}~\subseteq~
  \Aut(\Delta\sb{n})~=~S\sb{n}
$$
  (which is always true for the reduced \cspace), then $\psi$ acts on (the labelling of)
  the cyclotomic points \w{(c\sb{k})\sb{k=1}\sp{n}} as a rotation by an angle of
  \w[,]{\frac{2\pi}{N}} so the orbit of \w{c\sb{\ell}}  under this action is
  \w[,]{\alpha=(c\sb{i\sb{k}})\sb{k=1}\sp{N}} where \w[.]{i\sb{k}=\ell+(k-1)N\pmod{n}}

For each such orbit $\alpha$, we define one (open) fine cell
\w{\hF\sb{\sigma}(i\sb{k},i\sb{k+1})}  of \w{\hE\sb{\sigma}} for each element
\w{c\sb{i\sb{k}}} in the orbit: this consists of those configurations (angle sets)
\w{\vth\in\hE\sb{\sigma}} for which the angle
\w{|\theta\sb{i\sb{k}}-\theta\sb{i\sb{k}+1}|} is strictly smaller than
\w{|\theta\sb{i\sb{j}}-\theta\sb{i\sb{j}+1}|} for all \w[.]{1\leq j\neq k\leq N}

The same rule applies for the subdivision of \w{\wE\sb{\sigma}} in \w{\wCoG}
into  fine cells \w[.]{\wF\sb{\sigma}(i\sb{k},i\sb{k+1})}
\item If \w{\Aut([\sigma])} is dihedral of order \w{N=2M} (necessarily in the fully
  reduced case), we distinguish three cases:
\begin{enumerate}
\renewcommand{\labelenumii}{(\roman{enumii})}
\item If we have an orbit with three neighboring cyclotomic points, it necessarily includes
  \emph{all} the cyclotomic points \wh in other words, $\sigma$ is the identity
  permutation, and the open cell \w{\wE\sb{\sigma}} is divided into $N$ fine cells.
 These may be distinguished  by specifying for which \w{1\leq k\leq N} the angle
 \w{\theta\sb{k}-\theta\sb{k}+1} is smallest, and then further dividing this set into
 two subcells by the two possible orderings of \w{|\theta\sb{k+1}-\theta\sb{k+2}|} and
\w[.]{|\theta\sb{k-1}-\theta\sb{k}|} This will determine a canonical
  labelling of each configuration \w{\bx\in\wE\sb{\sigma}} in the fine cell
   \w[,]{\wF\sb{\sigma}(i\sb{k},i\sb{k-1})} starting at the
  \wwd{i\sb{k}}th vertex in the direction \w[;]{i\sb{k}\mapsto i\sb{k-1})}
   similarly \w{\wF\sb{\sigma}(i\sb{k},i\sb{k+1})} for the other.
\item If we have an orbit with exactly two neighboring cyclotomic points \wh so the whole
  orbit consists of such pairs \w{(i\sb{k},i\sb{k}+1)} \wb{k=1,\dotsc M} \wwh we may
  again name a fine cell \w[,]{\wF\sb{\sigma}(i\sb{k},i\sb{k}-1)} say, by specifying
  which \w{i\sb{k}} has the smallest angle difference between \w{\theta\sb{i\sb{k}}}
and its cyclotomic neighbor \w{\theta\sb{i\sb{k}\pm 1}} which is \emph{not} in the orbit.
\item If we have an orbit with no neighboring
  cyclotomic points, we may name a fine cell \w{\wF\sb{\sigma}(i\sb{k},i\sb{k}-1)}
  or \w{\wF\sb{\sigma}(i\sb{k},i\sb{k}+1)} by specifying which \w{i\sb{k}} has the
  smallest angle \w[,]{|\theta\sb{i\sb{k}}-\theta\sb{i\sb{k}\pm1}|} as in (i).
\end{enumerate}
\end{enumerate}

\begin{definition}\label{dmembrane}
For each open cell \w{\hE\sb{\sigma}} of \w{\hCG} corresponding to a cyclic ordering
\w{[\sigma]} as above, the \emph{membrane} separating two fine cells
\w{\hF\sb{\sigma}(i\sb{k},i\sb{k}+1)} and \w{\hF\sb{\sigma}(j\sb{m},j\sb{m}+1)}
is the subset
\w{\cM=\overline{\hF\sb{\sigma}(i\sb{k},i\sb{k}+1)}\cap
  \overline{\hF\sb{\sigma}(j\sb{m},j\sb{m}+1)}}
determined by the condition
\begin{equation}\label{eqredmemb}
|\theta\sb{i\sb{k}}-\theta\sb{i\sb{k}+1}|~=~
|\theta\sb{j\sb{m}}-\theta\sb{j\sb{m}+1}|~.
\end{equation}

Similarly, for each open cell \w{\wE\sb{\sigma}} of \w{\wCG} corresponding to a
dihedral ordering \w[,]{[\sigma]} the membrane
separating two fine cells \w{\wF\sb{\sigma}(i\sb{k},i\sb{k'})}
and \w{\wF\sb{\sigma}(j\sb{m},j\sb{m'})} is the subset
\w{\cM=\overline{\wF\sb{\sigma}(i\sb{k},i\sb{k'})}\cap
  \overline{\wF\sb{\sigma}(j\sb{m},j\sb{m'})}}
determined by one of two conditions:
\begin{enumerate}
\renewcommand{\labelenumi}{(\alph{enumi})}
\item If \w[,]{i\sb{k}=j\sb{m}} necessarily \w{k'\neq m'} \wwh say, \w{k'=k-1} and
\w{m'=m+2} \wwh and $\cM$ is then determined by the condition
\begin{equation}\label{eqfredmemba}
|\theta\sb{i\sb{k}}-\theta\sb{i\sb{k}-1}|~=~
|\theta\sb{j\sb{m}+1}-\theta\sb{j\sb{m}+2}|~.
\end{equation}
\item If \w[,]{i\sb{k}\neq j\sb{m}} $\cM$ is determined by the condition
\begin{equation}\label{eqfredmembb}
|\theta\sb{i\sb{k}}-\theta\sb{i\sb{k}+1}|~=~
|\theta\sb{j\sb{m}}-\theta\sb{j\sb{m}+1}|~.
\end{equation}
\end{enumerate}
\end{definition}
\end{subsection}

\begin{theorem}\label{ttriang}
If \w{\Gamma=\Gkc{n}} is a closed $n$-chain with length vector $\vel$,
its reduced \cspace\ \w{\hCG} has a regular \wwd{\AG}equivariant cell structure,
subordinate to the fine cell decomposition of \S \ref{sfinecell}, and thus to
the regular cell structure of \cite{PaninM}.
\end{theorem}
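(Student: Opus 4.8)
The plan is to take the regular cell structure on $\hCG$ of \cite{PaninM} recalled in \S\ref{scellcs} — with open cells $\hE_\sigma$ indexed by the cyclic orderings $[\sigma]\in S_n/C_n$ and their recursively defined boundary cells — and to show that the fine cell decomposition of \S\ref{sfinecell} refining it is a regular cell structure on which $\AG$ acts as a regular $G$-CW complex. Concretely, two things must be checked: (i) that the fine cells are regular cells whose closures are subcomplexes, so that the refinement is again a regular cell structure; and (ii) that $\AG$ permutes the fine cells in such a way that any element fixing a cell setwise fixes it pointwise, which is exactly what is needed to realize the free $H$-cells $\be^{n}\times H$ of \S\ref{sagcell}.

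First I would describe the action on Panina's structure. By Definition~\ref{dautsigma}, an automorphism $\psi\in\AG\subseteq D_n$ relabels the cyclotomic points and renormalizes, sending the cyclic ordering $[\sigma]$ to $[\sigma']$; this is a relabelling of arrows and hence a cellular homeomorphism $\hE_\sigma\to\hE_{\sigma'}$, so $\AG$ acts cellularly and the setwise stabilizer of $\hE_\sigma$ is exactly $\Aut([\sigma])$. The only obstruction to equivariance is that when $\Aut([\sigma])\neq\{1\}$ the cell is fixed setwise but acted on nontrivially inside. In the reduced case $\Aut([\sigma])$ is cyclic, generated by some $\psi$ of order $N$ acting on the cyclotomic labels as rotation by $2\pi/N$; hence $\psi$ cyclically permutes the angle gaps $|\theta_{i_k}-\theta_{i_k+1}|$ within each orbit $\alpha=(c_{i_k})_{k=1}^N$. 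The fine cells $\hF_\sigma(i_k,i_k+1)$, defined by requiring the gap at $i_k$ to be strictly smallest in its orbit, are therefore permuted freely and transitively by $\Aut([\sigma])$. Since any $\psi$ fixing such a fine cell setwise must first fix $\hE_\sigma$, hence lie in $\Aut([\sigma])$, freeness forces $\psi=1$; so each top fine cell has trivial stabilizer and its $\AG$-orbit is a free cell $\be^{n-3}\times\AG$. This settles (ii) for the top cells.

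Next I would verify regularity (i). In the natural angular coordinates the defining conditions of the fine cells are inequalities between the functionals $|\theta_{i_k}-\theta_{i_k+1}|$, so each fine cell is the intersection of the regular cell $\hE_\sigma$ with finitely many half-spaces, and the membranes of Definition~\ref{dmembrane} are the traces on $\hE_\sigma$ of the hypersurfaces $|\theta_{i_k}-\theta_{i_k+1}|=|\theta_{j_m}-\theta_{j_m+1}|$. The task is to check that these cuts divide $\hE_\sigma$ into topological balls whose boundaries are subcomplexes, i.e.\ that each membrane is itself a regular cell of one lower dimension; I expect this to follow by adapting Panina's own regularity argument (the M\"obius normalization to the affine model \eqref{eqaffinecell}) with the additional half-space constraints imposed. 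The lower-dimensional cells are then handled by the recursion of \S\ref{scellcs}: a boundary cell of $\hE_\sigma$ arises by merging adjacent arrows and is a Panina cell for a closed $(n-1)$-chain with merged length vector, so by induction on $n$ it already carries a fine equivariant structure, and one checks that the membranes of $\hE_\sigma$ restrict to faces of these boundary cells so that the subdivisions glue. On a boundary fine cell the (possibly larger) stabilizer consists of symmetries of aligned configurations, which genuinely fix the cell pointwise, so (ii) holds there too and is consistent with the $G$-CW requirement.

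The hard part will be this last, global, consistency: verifying that the minimal-gap cuts assemble — across adjacent top cells and across all dimensions through the recursion — into a single regular CW complex whose membranes and closures match up, and that on every fixed lower cell the stabilizer acts trivially rather than merely setwise (this is where the analysis feeds into the fixed-point description of Section~\ref{cscpp}). The equivariance of the top cells is essentially built into the definition of the fine cells; the genuine labor is the topological regularity of the fully subdivided, recursively built complex.
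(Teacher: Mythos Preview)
Your treatment of the group-theoretic side (ii) is sound and essentially what the paper has already set up in \S\ref{sfinecell}: $\Aut([\sigma])$ permutes the fine cells freely and transitively, so the top fine cells have trivial stabilizer. That part is fine.

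The gap is in (i), where you expect the membranes to behave like half-spaces under Panina's M\"obius normalization. They do not. In the angular coordinates $\vth$ the membrane condition \eqref{eqredmemb} is indeed linear (once the cyclic order fixes the signs), but in those coordinates $\hE_\sigma$ is \emph{not} a polytope: it is cut out by the nonlinear closing constraints \eqref{eqsumsin}. Conversely, in the affine coordinates $\vvs$ of \eqref{eqaffinecell} the cell $\hE_\sigma$ \emph{is} a simplex, but now the membrane, pushed through the tangent substitution $t_i=\tan(\theta_i/2)$ and the M\"obius map \eqref{eqmoebiusinv}, becomes a genuinely nonlinear algebraic hypersurface (the paper computes it explicitly, culminating in \eqref{eqtesmoebius}, with $t_2,t_n$ themselves determined implicitly from $\vvs$ via \eqref{eqvecsums}). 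So in neither coordinate system is a fine cell simply ``a ball intersected with half-spaces'', and adapting Panina's argument with linear constraints does not go through.

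The paper's route is different in kind: it does not attempt to show that the fine cells themselves are regular cells. Instead it observes that the membranes, and hence the fine cells, are \emph{semi-algebraic} subsets of the affine simplex, and then invokes Hironaka's triangulation of real semi-algebraic sets \cite{HiroT} (with {\L}ojasiewicz \cite{LojaT} for boundary compatibility) to produce a regular cell structure \emph{subordinate to} the fine decomposition. The equivariance is then obtained exactly as you say: triangulate one fine cell and transport by the free $\AG$-action to the others, proceeding by induction on dimension. So the theorem as stated promises a regular equivariant structure refining the fine cells, not that the fine cells are themselves the cells; the missing ingredient in your proposal is the semi-algebraic triangulation machinery that makes this work.
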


\begin{proof}
Since the membranes are
(codimension $1$) subspaces of the open cells \w{\hE=\hE\sb{\sigma}} in the dense open set
\w[,]{\hCoG\subseteq\hCG} we may use the identifications
\w{\hE\xra{\Phi}\hE'\xra{\Pc}\hE''} to try to describe the membrane $\cM$ using the chosen
affine coordinates for \w[.]{\hE''}

Since \w{\vt=(t\sb{1},\dotsc,t\sb{n})=\Phi(\vth)} for
\w[,]{t\sb{i}=\tan\frac{\theta\sb{i}}{2}} using the formula
$$
\tan(\alpha-\beta)~=~\frac{\tan(\alpha)-\tan(\beta)}{1+\tan(\alpha)\cdot\tan(\beta)}~,
$$
\noindent we see that \wref{eqredmemb} takes the form:
\begin{equation}\label{eqtanequal}
\frac{t\sb{i\sb{k}}-t\sb{i\sb{k}+1}}{1+t\sb{i\sb{k}}\cdot t\sb{i\sb{k}+1}}~=~
\frac{t\sb{j\sb{m}}-t\sb{j\sb{m}+1}}{1+t\sb{j\sb{m}}\cdot t\sb{j\sb{m}+1}}
\end{equation}
(assuming for the moment that \w{\theta\sb{i\sb{k}}>\theta\sb{i\sb{k}+1}} and
\w[).]{\theta\sb{j\sb{m}}>\theta\sb{j\sb{m}+1}}

For simplicity, let \w{i\sb{k}=i\sb{1}=1} (so \w[):]{t\sb{1}=0} writing
\w[,]{i=i\sb{k}+1} \w[,]{j=j\sb{m}} and \w[,]{k=j\sb{m}+1} \wref{eqtanequal} simplifies
to:
\begin{equation}\label{eqtanequalspec}
t\sb{i}\cdot t\sb{j}\cdot t\sb{k}~=~t\sb{k}-t\sb{j}-t\sb{i}~.
\end{equation}

If we further assume that \w{\{i,j,k\}\cap\{1,2,n\}=\emptyset}
(after identifying \w{\wE\sb{\sigma}} with \w{\wE\sb{\Id}} under
an appropriate relabelling of the arrows), then
\w[,]{\theta\sb{i}} \w[,]{\theta\sb{j}} and \w{\theta\sb{k}} are
taken under $\Pc$ to the corresponding coordinates
\w{(s\sb{i},s\sb{j},s\sb{k})} in the affine cell \w[,]{\wE''} as
in \wref[.]{eqaffinecell}

Substituting the values for \wref{eqmoebiusinv} into \wref{eqtanequalspec} yields:
\begin{equation}\label{eqtesmoeb}
\begin{split}
&\frac{t\sb{2}\sp{3}t\sb{n}\sp{3}}{((t\sb{2}-t\sb{n})s\sb{i}+t\sb{n})\cdot((t\sb{2}-t\sb{n})s\sb{j}+t\sb{n})\cdot
    ((t\sb{2}-t\sb{n})s\sb{k}+t\sb{n})}\\
&=~\frac{t\sb{2}t\sb{n}}{(t\sb{2}-t\sb{n})s\sb{k}+t\sb{n}}~-~
\frac{t\sb{2}t\sb{n}}{(t\sb{2}-t\sb{n})s\sb{j}+t\sb{n}}~-~
\frac{t\sb{2}t\sb{n}}{(t\sb{2}-t\sb{n})s\sb{i}+t\sb{n}}~,
\end{split}
\end{equation}
\noindent or, after cross-multiplying:
\begin{equation}\label{eqtesmoebius}
\begin{split}
t\sb{2}^{2}t\sb{n}^{2}~=&~
((t\sb{2}-t\sb{n})s\sb{i}+t\sb{n})\cdot((t\sb{2}-t\sb{n})s\sb{j}+t\sb{n})\\
    ~&-~((t\sb{2}-t\sb{n})s\sb{i}+t\sb{n})\cdot((t\sb{2}-t\sb{n})s\sb{k}+t\sb{n})\\
    ~&-~((t\sb{2}-t\sb{n})s\sb{j}+t\sb{n})\cdot((t\sb{2}-t\sb{n})s\sb{k}+t\sb{n})
\end{split}
\end{equation}

Note that if \w{t=\tan(\theta/2)} then (by the usual rational parametrization
of \w[):]{S\sp{1}}
\begin{equation}\label{eqcossin}
  \cos\theta~=~\frac{1-t\sp{2}}{1+t\sp{2}}\hsp\text{and}\hsp
  \sin\theta~=~\frac{2t}{1+t\sp{2}}~,
\end{equation}
\noindent so \wref{eqsumsin} becomes:
\begin{equation}\label{eqvecsums}
\sum\sb{i=1}\sp{n}\ \ell\sb{i}\cdot\frac{1-t\sb{i}\sp{2}}{1+t\sb{i}\sp{2}}~=0
\hsp\text{and}\hsp
\sum\sb{i=1}\sp{n}\ \ell\sb{i}\cdot\frac{2t\sb{i}}{1+t\sb{i}\sp{2}}~=~0~.
\end{equation}
\noindent Setting \w[,]{t\sb{1}=0} and substituting:
\begin{equation}\label{eqmoebiusinverses}
t\sb{i}~=~\frac{t\sb{2}t\sb{n}}{(t\sb{2}-t\sb{n})s\sb{i}+t\sb{n}}~.
\end{equation}
\noindent from \wref{eqmoebiusinv} into \wref{eqvecsums} for \w[,]{i=3,\dotsc,n-1}
we obtain two equations involving
\w[.]{\{t\sb{1}=0,t\sb{2},s\sb{3},\dotsc,s\sb{n-1},t\sb{n}\}}
We can solve these for \w{t\sb{2}}
and \w[,]{t\sb{n}} obtaining an algebraic equation for the membrane in the variables
\w{\vvs=(s\sb{3},\dotsc,s\sb{n-1})} from \wref[.]{eqtesmoeb} The fine cells bounded by
this membrane are therefore semi-algebraic sets inside the affine sets (thought of
as open subsets of the standard affine space \w[),]{\RR{n-3}} in which the equalities
\wref[,]{eqredmemb} \wref[,]{eqfredmemba} and \wref[,]{eqfredmembb} are replaced by
inequalities.

We may now use Hironaka's result on triangulating real semi-algebraic sets
(see \cite{HiroT}) to obtain the required triangulation, which can be made compatible
with that of the boundaries by \cite{LojaT}. We do so by induction on the dimension
of the naive cells: the main point to keep in mind is that once we choose a
triangulation for one of the fine cells, on one side of a membrane, we may reproduce it
for all the others using the free action of \w{\AG} on the interior (and the fact
that this action is compatible with that on the boundary of the fine cell,
which may not be free).
\end{proof}

\begin{corollary}\label{ctriang}
For \w{\Gamma=\Gkc{n}} as above, the fully reduced \cspace\ \w{\wCG}
also has a regular \wwd{\AG}equivariant cell structure, compatible with that
of \w{\hCG} in Theorem \ref{ttriang}.
\end{corollary}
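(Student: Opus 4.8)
The plan is to realize the fully reduced space as a quotient of the reduced one and to push the cell structure of Theorem \ref{ttriang} through this quotient. Writing \w{\rho\in\Euc{2}} for a fixed reflection, we have \w[,]{\wCG\cong\hCG/\langle\rho\rangle} since \w[.]{\wCG=\CG/\Euc{2}=(\CG/\Eucp{2})/(\Euc{2}/\Eucp{2})} On \w{\CG} the group \w{\AG} acts by precomposition \w{\bx\mapsto\bx\circ f} while \w{\Euc{2}} acts by postcomposition \w[,]{\bx\mapsto g\circ\bx} and these two actions commute; hence the induced \wwd{\AG}action on \w{\hCG} commutes with the residual involution $\rho$. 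It therefore suffices to produce a regular cell structure on \w{\hCG} that is equivariant for the larger group \w[,]{\langle\rho\rangle\times\AG} for then dividing by \w{\langle\rho\rangle} yields a regular \wwd{\AG}equivariant structure on \w[,]{\wCG} and the quotient map \w{\delta\colon\hCG\to\wCG} of \S \ref{scfrcs} is cellular, giving the asserted compatibility.

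The first point is that the fine-cell machinery of \S \ref{sfinecell} already accommodates $\rho$. In the coordinates \w{t\sb{i}=\tan(\theta\sb{i}/2)} the involution acts by \w{\theta\sb{i}\mapsto-\theta\sb{i}} (that is, \w[),]{t\sb{i}\mapsto-t\sb{i}} and this leaves every angle difference \w{|\theta\sb{i\sb{k}}-\theta\sb{i\sb{k}+1}|} invariant, so it preserves the membrane conditions \wref[,]{eqredmemb} \wref[,]{eqfredmemba} and \wref[.]{eqfredmembb} The dihedral fine cells \w{\wF\sb{\sigma}(i\sb{k},i\sb{k'})} of \S \ref{sfinecell}(b) are cut out by exactly the same inequalities as the cyclic cells of part (a), and the trivial double cover \w{\delta\sp{o}\colon\hCoG\to\wCoG} carries the pair \w{\hE\sb{\sigma'}\amalg\hE\sb{\sigma''}} onto \w[,]{\wE\sb{\sigma}} with $\rho$ interchanging the two sheets. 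Thus the semi-algebraic membrane equation \wref{eqtesmoeb} obtained in the proof of Theorem \ref{ttriang} is unchanged, and the fine cells for \w{\wCG} are again semi-algebraic subsets of the affine cells \wref[,]{eqaffinecell} now indexed by \w{S\sb{n}/D\sb{n}} in place of \w[.]{S\sb{n}/C\sb{n}}

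With this in hand I would run the triangulation argument of Theorem \ref{ttriang} verbatim, using Hironaka's theorem \cite{HiroT} to triangulate the semi-algebraic fine cells and \L{}ojasiewicz's result \cite{LojaT} to match the triangulations along shared faces, by induction on dimension. The only modification is to choose these triangulations \wwd{(\langle\rho\rangle\times\AG)}equivariantly, rather than merely \wwd{\AG}equivariantly: on the dense open stratum \w{\hCoG} the action of \w{\langle\rho\rangle\times\AG} is free, so one triangulates a single fine cell and propagates the choice by the group action, exactly as before.

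The main obstacle is the ramification locus of $\delta$, namely the fully aligned configurations of $\Gamma$, which are precisely the fixed points of $\rho$ (these occur, for instance, for even equilateral polygons). There the \wwd{\langle\rho\rangle}action is not free, so to guarantee that the quotient cell structure is still \emph{regular} I must arrange that the fixed set \w{(\hCG)\sp{\rho}} is a subcomplex. Granting this, $\rho$ pairs the cells of the complement into mirror images while fixing those lying on \w{(\hCG)\sp{\rho}} pointwise; the quotient \w{\wCG} then acquires one cell for each such mirror pair together with the fixed subcomplex, and the quotient map restricts to a homeomorphism on each, so every resulting cell is again a ball. Ensuring that the inductive triangulation of Theorem \ref{ttriang} can be taken with \w{(\hCG)\sp{\rho}} as a subcomplex while remaining \wwd{\AG}equivariant is the one genuinely new point; it is handled by triangulating \w{(\hCG)\sp{\rho}} first and extending outward $\rho$-equivariantly. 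With this in place, \w{\wCG=\hCG/\langle\rho\rangle} carries a regular \wwd{\AG}equivariant cell structure compatible with that of \w[,]{\hCG} as claimed.
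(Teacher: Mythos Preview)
Your approach is correct and follows the same outline as the paper's: realize \w{\wCG} as \w[,]{\hCG/\langle\rho\rangle} use that the \wwd{\AG} and \wwd{\langle\rho\rangle}actions commute, and then push the equivariant cell structure through the quotient. The paper's proof, however, is much shorter because it exploits a sharp observation you do not make explicit: in the regular cell structure of Section~\ref{cgppoly} (and hence in the refinement of Theorem~\ref{ttriang}), the only cells fixed by $\rho$ are \emph{vertices}, namely the fully aligned (linear) configurations with all angles $0$ or $\pi$. Away from these, $\rho$ swaps distinct cells in pairs, so the quotient inherits a regular cell structure for free; at the isolated fixed vertices there is nothing to triangulate, and the paper simply cites \cite[\S 4]{PaninM} and \cite{GPaninM} for how these branch points sit in the cell structure. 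Your program of re-running the Hironaka--{\L}ojasiewicz argument for the enlarged group \w{\langle\rho\rangle\times\AG} and first triangulating \w{(\hCG)\sp{\rho}} as a subcomplex is sound, but once you know that \w{(\hCG)\sp{\rho}} is $0$-dimensional it becomes unnecessary: a finite set of vertices is automatically a subcomplex, and the ``extending outward $\rho$-equivariantly'' step is just pairing off mirror cells.
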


\begin{proof}
The \wwd{C\sb{2}}action on \w{\hCG} by reflections in the $x$-axis generally
switches two distinct cells in the regular cell structure of Section \ref{cgppoly},
thus form a double cover of a single cell in \w[.]{\wCG} The only cells fixed by
the \wwd{C\sb{2}}action are vertices corresponding to linear configurations
(with all adjacent angles $0$ or $\pi$), and these can be dealt with as in
\cite[\S 4]{PaninM} (see also \cite{GPaninM}).
\end{proof}

%
%
\section{Symmetric configurations for planar polygons}
\label{cscpp}

In Section \ref{capp} we described a process for producing an
equivariant cell structure for the (fully) reduced \cspace\ of a
planar polygon, based on a refinement of the non-equivariant
regular cell structure described in Section \ref{cgppoly}. When
the \wwd{\AG}action on the fine cells is free, the orbit yields a
single cell in the associated symmetric \cspace\ \w[.]{\wSG}  As
noted above, this will happen in the interior of
the top dimensional cells. However, in the lower dimensional
cells, occurring in the boundary of those with free action, we may
have fixed points. From the proof of Theorem \ref{ttriang} we see
that in fact we must start from the lowest dimensional cells in
constructing our equivariant cell structure. Thus, we need to
understand the \emph{symmetric} configurations: those fixed under
a subgroup $H$ of \w[).]{\AG} In particular, these will be needed
in order to obtain a compatible decomposition of \w[.]{\wSG}

By Lemma \ref{pautograph}, for any $n$-polygon $\Gamma$ the automorphism group \w{\AG}
is either cyclic or dihedral, so the same is true for any subgroup \w[.]{H\leq\AG}
We must consider three cases\vsm:

\noindent\textbf{Case I.}\ Cyclic subgroup of \w{\AG} generated by a reflection\vsm:

Assume first that the subgroup $H$ is generated by a reflection
\w[,]{\rho\in\AG} in an axis of symmetry $\lambda$ for the labels
\w{(A\sb{1},\dotsc,A\sb{n})} of $\Gamma$. As noted in the proof of
Lemma \ref{pautograph}, this can have three forms:

\begin{enumerate}
\renewcommand{\labelenumi}{(\arabic{enumi})}
\item If \w{n=2k+1} is odd, $\rho$ is a reflection in a \emph{median}
  connecting \w{A\sb{k+1}} with the midpoint of \w[,]{A\sb{n}A\sb{1}} say, so
  \w{\rho(A\sb{i})=A\sb{n+1-i}} \wb[.]{i=1,\dotsc,k+1}
\item If \w{n=2k} is even, $\rho$ may be a reflection in
\begin{enumerate}\renewcommand{\labelenumii}{(\roman{enumii})}
\item  a \emph{diagonal}, connecting \w{A\sb{1}} with \w[,]{A\sb{k+1}} say, so
\w{\rho(A\sb{i})=A\sb{n+2-i}} (indices taken modulo $n$).
\item a \emph{midsegment}, connecting the midpoint of \w{A\sb{n}A\sb{1}} with that
  of \w[,]{A\sb{k}A\sb{k+1}} say, so \w[.]{\rho(A\sb{i})=A\sb{n+1-i}}
\end{enumerate}
\end{enumerate}

\begin{proposition}\label{lreflsym}
  Let \w{\Gamma=\Gkc{n}} be a planar $n$-polygon, with \w{H=C\sb{2}\leq\AG}
  generated by a reflection in the axis $\lambda$. We then have a map
  \w{\phi:\wCG\sp{H}\to\wC(\Gkl)} from the fixed-point set, where:
\begin{enumerate}
\renewcommand{\labelenumi}{(\arabic{enumi})}
\item If \w{n=2k+1} and $\lambda$ is a median to \w[,]{AB} \w{\Gkl} is either
  half of \w[,]{\Gamma'=\Gamma\setminus(AB)} and $\phi$ is one-to-one.
\item If \w{n=2k} and $\lambda$ is a diagonal, \w{\Gkl} is either half of $\Gamma$,
  and $\phi$ is one-to-one except over \w{\Gkc{k}} (the subspace of
  chains which close up), where the fiber is \w[.]{\RP{1}}
\item If \w{n=2k+2} is even and $\lambda$ is a midsegment from \w{AB} to \w[,]{CD}
  \w{\Gkl} is either half of \w[,]{\Gamma'=\Gamma\setminus(AB),(CD)} and
  $\phi$ is a double cover.
\end{enumerate}
\end{proposition}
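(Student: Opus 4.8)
The plan is to describe the fixed-point set $\wCG^H$ explicitly, build $\phi$ by restriction to a half-chain, and then compute its fibres by reconstructing the mirror axis from a prescribed half-chain.

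\emph{Characterising the fixed points.} A class $[\bx]\in\wCG^H$ is fixed by $H=\langle\rho\rangle$ exactly when $\bx\circ\rho=g\,\bx$ for some $g\in\Euc{2}$. Since $\rho$ is an involution, $g^{2}$ stabilises $\bx$; for a configuration not contained in a line this forces $g^{2}=\Id$, and because $\rho$ reverses the cyclic order of the vertices, $g$ must be orientation-reversing, i.e. a reflection $R_{\lambda}$ in some line $\lambda$. Thus, away from the lower-dimensional locus of collinear configurations (where the $\Euc{2}$-stabiliser is larger and must be treated as a separate boundary stratum), $\wCG^{H}$ consists precisely of the genuinely mirror-symmetric configurations, the geometric reflection $R_{\lambda}$ realising the combinatorial relabelling $\rho$. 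The three forms of $\lambda$ listed before the statement correspond to how many vertices of $\Gamma$ lie on the axis.

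\emph{Constructing $\phi$.} Given such a mirror-symmetric $\bx$, the two halves of the vertex sequence cut out by $\lambda$ are $R_{\lambda}$-images of one another, so they determine the \emph{same} point of the fully reduced space $\wC(\Gkl)$ of the open $k$-chain (chirality having been quotiented out). Restricting $\bx$ to one half and normalising therefore gives a well-defined $\phi\colon\wCG^{H}\to\wC(\Gkl)$. The three identifications of $\Gkl$ then come from deleting the edge(s) that $\lambda$ crosses: one edge $AB$ when $n=2k+1$ (so $\Gkl$ is half of $\Gamma'=\Gamma\setminus(AB)$), no edge when $\lambda$ is a diagonal (so $\Gkl$ is half of $\Gamma$ itself), and two edges $AB,CD$ in the midsegment case.

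\emph{Fibre analysis by reconstructing the axis.} The heart of the argument is the reverse problem: from a half-chain in $\wC(\Gkl)$, recover all admissible symmetric closures, which amounts to a Euclidean reconstruction of $\lambda$. In case (1) the axis must pass through the fixed vertex $A_{k+1}$, and the deleted edge forces $d(\bx(A_1),\lambda)=\tfrac12\ell(AB)$; this leaves a candidate line through $A_{k+1}$, and one must check against the fully reduced identification to conclude $\phi$ is one-to-one. In case (2) the axis is pinned through the two fixed vertices $A_1,A_{k+1}$, hence is the unique line joining them, except where the half-chain closes up ($A_1=\bx(A_{k+1})$, a point of $\Gkc{k}$): then every line through the common point is admissible, and the fibre is the space $\RP{1}$ of such directions. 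In case (3) no vertex lies on $\lambda$, and the two deleted edges force $\lambda$ to be simultaneously at distances $\tfrac12\ell(AB)$ and $\tfrac12\ell(CD)$ from $\bx(A_1)$ and $\bx(A_{k+1})$; the admissible axes are the common tangents to the two corresponding circles on the correct side, of which there are two, so $\phi$ is a double cover.

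The hard part will be this last step, and specifically the bookkeeping in the fully reduced space: I must decide which reconstructed closures are genuinely distinct once chirality is divided out, and I must control the degenerate strata — the collinear configurations from the first step, the branch locus where two candidate axes coincide, and the closed-up locus $\Gkc{k}$ in case (2) where the fibre jumps to $\RP{1}$. I would organise this by stratifying $\wC(\Gkl)$ according to the number of admissible axes and verifying that $\phi$ is continuous across the strata, so that the generic fibre counts above extend to the description of $\phi$ over all of $\wC(\Gkl)$.
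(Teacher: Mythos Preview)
Your proposal is correct and rests on the same geometric idea as the paper: a symmetric configuration is determined by one half-chain together with the position of the mirror axis. The paper organises the argument in the reverse direction from yours --- rather than defining $\phi$ and then computing its fibres by searching for admissible axes, it builds the inverse directly: fix the axis $L$ in a normalised position, then rotate a given configuration of the half-chain about one endpoint until the free end meets $L$ (case~1), observe that $L$ is simply the line through the two endpoints (case~2, with $L$ free in $\RP{1}$ when those endpoints coincide), or rotate until the free end meets one of the two parallels $L',L''$ to $L$ at distance $\tfrac12\ell(CD)$ (case~3, which immediately gives the two sheets). This formulation makes the chirality bookkeeping automatic and avoids your common-tangent count in case~(3); on the other hand, your explicit characterisation of fixed points as configurations with $\bx\circ\rho=R_\lambda\bx$, and your attention to the collinear and branch strata, is more careful than what the paper's terse proof actually writes down.
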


\begin{proof}
In each case, by reflecting a given configuration for the open chain \w{\Gkl} about
  the geometric axis of symmetry $L$, we obtain a unique symmetric configuration
  for $\Gamma$.
\begin{enumerate}
\renewcommand{\labelenumi}{(\arabic{enumi})}
\item When $\lambda$ is a median, $L$ is the $y$-axis (the perpendicular
  bisector of \w[)]{AB} in the fully reduced case.  We may then rotate any configuration
  for \w{\Gkl} about $B$ until it touches $L$.
\item When $\lambda$ is a diagonal \w[,]{AD} say, for any configuration $\bx$ of the open
  chain \w{\Gkl} we let $L$ be the line from \w{\bx(A)} to \w[;]{\bx(D)}  however,
  when these two coincide we may choose $L$ at will from \w[.]{\RP{1}}
\item When $\lambda$ is a midsegment from \w{AB} to \w[,]{CD} $L$
is the perpendicular bisector of \w[;]{AB} let \w{L'}  and \w{L''}
be
  the two parallels to $L$ at distance \w[,]{\frac{1}{2}d(C,D)} and then rotate
any configuration for \w{\Gkl} about $B$ until it touches \w{L'} or \w[.]{L''}
\end{enumerate}

See Figure \ref{fmidref} for the third case.
\end{proof}
%
%
\begin{figure}[ht]
\centering{\includegraphics[width=0.2\textwidth]{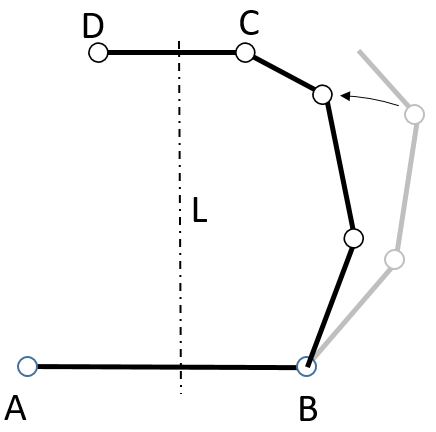}}
\caption{Midsegment reflection}
\label{fmidref}
\end{figure}

\begin{remark}\label{rscell}
In principle, we would like to relate the analysis of the symmetric configurations in
\w{\hCG} or \w{\wCG} with the fine cell structure introduced in Section \ref{capp}.
This would require a better understanding of the triangulation described in
Theorem \ref{ttriang}.  However, it is possible to use Proposition \ref{lreflsym}
to study the action of \w{\AG} on the open cell \w{E=E\sb{\Id}} of (strictly)
convex configurations, under a single reflection
\w{\rho\in\AG} in an axis of symmetry $\lambda$ for the labels
\w{(A\sb{1},\dotsc,A\sb{n})} of $\Gamma$.

The set $D$ of convex configurations \w{\bx\in E} which are fixed
under $\rho$ must also be symmetric in the geometric sense, with
axis of symmetry $L$ realizing $\lambda$. Such a configuration is
completely described by the half on one side of $L$, which is
simply a fully reduced convex configuration $\by$ for an open
chain of length \w[.]{k\approx\frac{n}{2}} As in \S
\ref{egopench}, we parameterize such a $\by$ by
\w[,]{\vth(\by)=(\theta\sb{1},\dotsc,\theta\sb{k})} (see \S
\ref{sparamcs}), with
$$
0~=~\theta\sb{1}<\theta\sb{2}<\dotsc<\theta\sb{k}~<~\pi~.
$$
\end{remark}

Switching to the parametrization \wref[,]{eqtphi} we have:

\begin{lemma}\label{lopenchk}
The set of strictly convex fully reduced configurations of a
closed chain \w{\Gamma=\Gkc{n}} symmetric with respect to a
reflection in $L$ as above are parameterized by
\begin{equation}\label{eqvtt}
\vt~=~(t\sb{1},\dotsc,t\sb{k})~=~(\tan(\theta\sb{1}/2),\dotsc,\tan(\theta\sb{k}/2))~.
\end{equation}
\noindent with \w[.]{0=t\sb{1}<t\sb{2}<\dotsc<t\sb{k}<\infty}
\end{lemma}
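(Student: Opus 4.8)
The plan is to obtain the statement as a direct change of variables from the $\theta$-parametrization already set up in Remark \ref{rscell}, transported across the homeomorphism $\phi(\theta)=\tan(\theta/2)$ of \eqref{eqtphi}. By Remark \ref{rscell}, every strictly convex fully reduced configuration of $\Gamma=\Gkc{n}$ symmetric with respect to the reflection in $L$ is completely determined by its half, a fully reduced convex open chain $\by$ of length $k\approx n/2$, and such a $\by$ is in turn encoded bijectively by its vector of edge-angles $\vth(\by)=(\theta_1,\dotsc,\theta_k)$ subject to $0=\theta_1<\theta_2<\dotsc<\theta_k<\pi$. Thus it remains only to rewrite these inequalities in the coordinate $t_i=\phi(\theta_i)$.

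First I would record the elementary properties of $\phi$ on the relevant interval. Since $\phi'(\theta)=\tfrac{1}{2}\sec^2(\theta/2)>0$ for $\theta\in[0,\pi)$, the map $\phi$ is strictly increasing there; moreover $\phi(0)=0$ and $\phi(\theta)\to+\infty$ as $\theta\to\pi^{-}$. Hence $\phi$ restricts to a strictly increasing homeomorphism $[0,\pi)\xrightarrow{\cong}[0,\infty)$.

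Applying $\phi$ coordinate-wise then carries the chain $0=\theta_1<\dotsc<\theta_k<\pi$ to $0=t_1<t_2<\dotsc<t_k<\infty$, which is exactly \eqref{eqvtt}; and because $\phi$ is a bijection of $[0,\pi)$ onto $[0,\infty)$, the composite $\by\mapsto\vth(\by)\mapsto\vt$ remains a bijection onto the set so described. This yields the asserted parametrization.

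Since the geometric heart of the matter---identifying the fixed configurations with half-chains and deducing the angle inequalities---is carried out in Remark \ref{rscell}, the only point demanding attention is the behavior of $\phi$ at the right endpoint: the open condition $\theta_k<\pi$ must translate to the open condition $t_k<\infty$ (an unbounded range) rather than to a finite bound, which is precisely the content of $\tan(\theta/2)\to+\infty$ as $\theta\to\pi^{-}$. I expect this to be the sole, and minor, obstacle, the remainder being a routine monotone substitution.
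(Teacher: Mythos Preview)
Your argument is correct as far as it goes: once Remark \ref{rscell} has handed you the bijection between symmetric configurations and half-chains $\by$ with angle vector satisfying $0=\theta_1<\dotsc<\theta_k<\pi$, the passage to $\vt$ is indeed nothing more than the monotone change of variable $\phi\colon[0,\pi)\to[0,\infty)$ that you describe.

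The paper, however, treats the lemma as the place to \emph{justify} rather than merely invoke the claim in Remark \ref{rscell} that the half-chain ``completely describes'' the full symmetric configuration. Its proof computes the vector sum $\vv=(a,b)$ of the half-chain arrow diagram from $\vt$ via \eqref{eqvv}, and then in each of the three cases (median, diagonal, midsegment) works out trigonometrically how to recover the axis $L$---and hence the remaining angles $\theta_{k+1},\dotsc,\theta_n$---from $\vt$ alone. This also pins down the exact value of $k$ in each case, which the statement leaves as $k\approx n/2$ and the paper explicitly promises to resolve in the proof. Your approach is cleaner if one is willing to take Remark \ref{rscell} on faith; the paper's approach supplies the missing geometric reconstruction that makes the parametrization genuinely effective, and is what is needed later when one wants to work with these coordinates concretely.
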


The precise value of $k$ will appear in the proof.

\begin{proof}
As in \wref[,]{eqvecsums} we may calculate the vector sum of the arrow diagram \w[,]{\vth}
using the length vector $\vel$ for $\Gamma$, by
\begin{equation}\label{eqvv}
\vv~:=~\left(\sum\sb{i=1}\sp{k}\ \ell\sb{i}\cdot\frac{1-t\sb{i}\sp{2}}{1+t\sb{i}\sp{2}},~
\sum\sb{i=1}\sp{k}\ \ell\sb{i}\cdot\frac{2t\sb{i}}{1+t\sb{i}\sp{2}}\right)~=~
(a,b)~=~\mu(\cos\theta,\sin\theta)~,
\end{equation}
\noindent where \w{\mu=\sqrt{a\sp{2}+b\sp{2}}} and \w[.]{\theta=\arctan(b/a)}

We distinguish three cases in calculating the slope $\tau$ of $L$ from $\vt$:

\begin{enumerate}
\renewcommand{\labelenumi}{(\arabic{enumi})}
\item If \w{n=2k+1} and $L$ is the perpendicular from \w{\by(A\sb{k})=\vv} to the
  midpoint of \w{\by(e\sb{n})} (because of the symmetry of $\by$), which has length
  \w{\ell\sb{n}} and forms an (unknown)
  angle of \w{\theta\sb{n}} with the positive $x$ axis, so $L$ forms an angle of
  \w[.]{\theta\sb{n}-\pi/2} Since $L$, $\vv$, and \w{\by(e\sb{n})} form a right
  triangle with hypotenuse $\mu$ and edge \w{\ell\sb{n}/2}
  facing the angle \w[,]{\alpha=\pi/2+\theta-\theta\sb{n}} with
  \w[.]{\sin\alpha=\frac{\ell\sb{n}/2}{\mu}} This allows us to recover $\alpha$, and
  thus \w{\theta\sb{n}} and find the slope of $L$, from which we can recover the
  remaining angles \w[.]{\theta\sb{k+1},\dotsc,\theta\sb{n-1}}
\item If \w{n=2k} and $L$ is a diagonal, its direction is the vector $\vv$ of
  \wref[,]{eqvv} from which we can calculate \w{\theta\sb{k+1},\dotsc,\theta\sb{n}}
  by reflecting \w{\theta\sb{1},\dotsc,\theta\sb{k}} in $L$.
\item If \w{n=2(k+1)} and $L$ is the perpendicular bisector of \w{\by(e\sb{k+1})}
  and \w[,]{\by(e\sb{n})} it forms an angle $\alpha$ with $\vv$, with
  \w[,]{\sin\alpha=\frac{|\ell\sb{n}/2-\ell\sb{k+1}/2|}{\mu}} from which we can
  calculate $\alpha$, and thus the remaining angles
  \w[.]{\theta\sb{k+1},\dotsc,\theta\sb{n}}
\end{enumerate}
\end{proof}

\noindent\textbf{Case II.}\ Cyclic subgroup of \w{\AG} generated by a rotation\vsm:

In the case where the cyclic subgroup \w{H\leq\AG} is generated by a rotation we have:

\begin{proposition}\label{lsymmetcellcy}
Let $\Gamma$ be a planar $n$-polygon and \w{H=C\sb{d}} a rotation subgroup of
\w[,]{\AG} which we identify with a subgroup of \w{D\sb{2d}}
for \w[,]{d|n} and let \w[.]{k:=n/d}  The fixed-point set \w{\wCG\sp{H}} is
isomorphic to a disjoint union, indexed by the discrete set
\w[,]{\wC(\Gen{d})\sp{H}} of copies of \w[,]{\hC(\Gkl)}  unless
\w{\wz\in\wC(\Gen{d})\sp{H}} is collinear, in which case we replace
\w{\hC(\Gkl)} by \w[.]{\wC(\Gkl)}
\end{proposition}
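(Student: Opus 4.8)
The plan is to represent each class $[\bx]\in\wCG\sp{H}$ by a fundamental open sub-chain together with a discrete \emph{winding} invariant, and to identify that invariant with a point of the index set $\wC(\Gen{d})\sp{H}$. To set up, let $\psi$ generate $H=C\sb{d}$, so $\psi$ shifts every vertex by $k=n/d$. A class $[\bx]$ lies in $\wCG\sp{H}$ precisely when $\bx\circ\psi=g\circ\bx$ for some isometry $g$ of the plane. Iterating gives $g\sp{d}\circ\bx=\bx\circ\psi\sp{d}=\bx$, so $g\sp{d}=\Id$ whenever $\bx$ spans the plane, and $g$ has finite order. On the dense open set $\wCoG$ the arrows have distinct directions, so their cyclic order is well defined; since the shift $\psi$ preserves this order, $g$ must preserve it too, forcing $g$ to be a rotation (a reflection would reverse it). Writing $R$ for the rotation by $2\pi j/d$ underlying $g$, the relation $\bx(i+k)=g(\bx(i))$ gives $\bx(e\sb{i+k})=R(\bx(e\sb{i}))$; the $d$ vertices $\bx(i),\bx(i+k),\dots$ cannot collide, so $g$ has order exactly $d$ and $\gcd(j,d)=1$. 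Finally, setting $\bw:=\sum\sb{i=1}\sp{k}\bx(e\sb{i})$, the closure condition $\sum\sb{i=1}\sp{n}\bx(e\sb{i})=\vz$ becomes $(\sum\sb{m=0}\sp{d-1}R\sp{m})\bw=\vz$, which holds automatically since $R\neq\Id$; thus closure imposes no constraint on the fundamental chain.

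Next I would reconstruct $\bx$ from its fundamental data. The centre of $g$ is its unique fixed point, which is the ($H$-invariant) barycenter $B(\bx)$, so $\bx$ is the $d$-fold rotational copy about $B(\bx)$ of the open chain $\by$ carried by the edges $e\sb{1},\dots,e\sb{k}$. Conversely, given an open chain $\by$ with endpoints $P\sb{0},P\sb{k}$ and a winding $j$ with $\gcd(j,d)=1$, there is a unique rotation by $2\pi j/d$ carrying $P\sb{0}$ to $P\sb{k}$ (its centre is the apex of the corresponding isosceles triangle when $P\sb{0}\neq P\sb{k}$, and $P\sb{0}$ itself when $\bw=\vz$), and rotating $\by$ repeatedly closes up to a fixed configuration. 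Applying an orientation-preserving isometry to $\by$ leaves $[\bx]$ unchanged, and for $j\neq d-j$ no reflection relates two winding-$j$ configurations, so the winding-$j$ classes form one copy of $\hC(\Gkl)$. Applying a reflection to $\bx$ conjugates $R$ to $R\sp{-1}$, hence replaces $j$ by $d-j$ while yielding the same class in $\wCG$; so $j$ and $d-j$ determine the same component. The resulting components are indexed exactly by the equilateral $d$-gons $(\bw,R\bw,\dots,R\sp{d-1}\bw)$, i.e.\ by the regular star polygons, which are precisely the $H$-fixed points $\wz\in\wC(\Gen{d})\sp{H}$; this index set is discrete because a $C\sb{d}$-symmetric equilateral $d$-gon sits on a single regular rotation orbit.

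I would then isolate the collinear index points. The star $\wz$ is collinear exactly when $R=R\sp{-1}$, i.e.\ $j=d-j$, which by $\gcd(j,d)=1$ occurs only for $d=2$ (the digon $(\bw,-\bw)$). There reflection now fixes the winding and acts \emph{within} the single component, so $\by$ must be taken modulo all isometries and the contribution is $\wC(\Gkl)$ rather than $\hC(\Gkl)$; this is also exactly the locus where the cyclic-order argument degenerates and $g$ may be realized either as a half-turn or as a reflection, both of which $\wC(\Gkl)$ (which already contains the aligned open chains in its boundary) absorbs. Collecting the components over $\wz\in\wC(\Gen{d})\sp{H}$ then yields the claimed disjoint union.

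The hard part will be the continuity and bijectivity of the reconstruction across the degenerate sub-loci: verifying that the centre varies continuously as $\bw\to\vz$ inside a fixed non-collinear winding (so the component is genuinely all of $\hC(\Gkl)$, including chains that close up at the centre), and that in the $d=2$ component the transition from the half-turn realization to the reflection realization at fully aligned configurations matches the boundary identifications already built into $\wC(\Gkl)$. Pinning down that $g$ is orientation-preserving off the collinear locus, and hence the precise $\hC$-versus-$\wC$ dichotomy there, is the crux of the argument.
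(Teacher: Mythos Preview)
Your approach is essentially the same as the paper's: decompose an $H$-fixed configuration into a fundamental open $k$-chain $\by$ together with the inner equilateral $d$-gon $\wz$ (your ``winding'' invariant), observe that the non-collinear star polygons give $\hC(\Gkl)$ while the collinear case ($d=2$) gives $\wC(\Gkl)$. Your version is in fact more careful than the paper's rather sketchy argument --- in particular your cyclic-order justification that $g$ must be a rotation off the collinear locus, and your explicit identification of the index set with windings $j$ modulo $j\sim d-j$ with $\gcd(j,d)=1$, make precise what the paper leaves to the icosagon/decagon examples.
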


\begin{proof}
Note that $H$ is of index $2$ in \w[,]{D\sb{2d}} which we identify with
the automorphism group of the equilateral polygon \w{\Gen{d}} with $d$ edges
of length $L$ (unless \w[,]{d=2} in which case \w[).]{H=D\sb{2d}}

Any fully reduced configuration \w{\wx\in\wCG} is determined by
its restriction to an open subchain $\Delta$ of $\Gamma$ with \w{k=n/d} edges,
yielding a reduced configuration $\hy$ for $\Delta$, together with a fully reduced
configuration $\wz$ for \w[,]{\Gen{d}} with $L$ equal to the distance between
the start and end points of $\hy$. Here $\wz$ must be fixed under all
(geometric) rotations of \w[,]{\Gen{d}} and thus under the full automorphism
group \w[,]{\Aut(\Gen{d})=D\sb{2d}} since the angles at all vertices of the polygon
must be equal.

The fact that $\hy$ need not be \emph{fully} reduced means that we will generally
have two fully reduced configurations \w{\pm\wy} attached to each fully symmetric
configuration of \w[.]{\Gen{d}}

Thus for the icosagon \w[,]{\Gen{20}} we have four configurations fixed
under \w{C\sb{5}}
for a given (non-collinear) fully reduced open chain configuration $\wy$ of length $4$,
`attached'' to the regular pentagon and pentagram on either side, as shown
in Figure \ref{ficosagon}, where each of the two pairs (a)-(b) and (c)-(d)
correspond to the reduced configurations \w[.]{\pm\wy}

%
%
\begin{figure}[ht]
\centering{\includegraphics[width=0.8\textwidth]{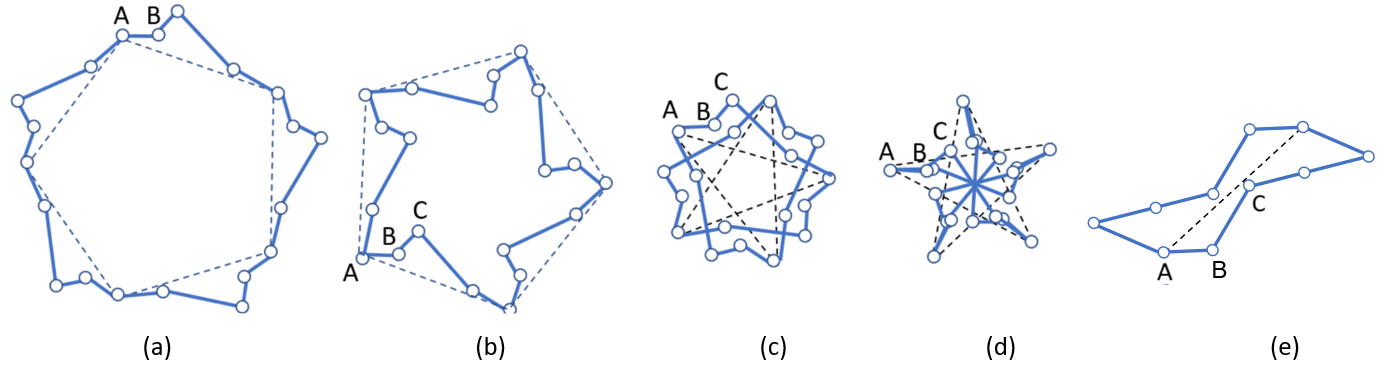}}
\caption{Symmetric configurations for the icosagon and decagon}
\label{ficosagon}
\end{figure}

On the other hand, the
(fully reduced) \wwd{C\sb{2}}configuration for the decagon shown in
  Figure \ref{ficosagon}(e) is the same for \w[,]{\pm\wy} so it requires only the fully
  reduced open loop $\wy$.

Note that even when \w{L=0} (that is, $\hy$ is actually a closed loop),
we still have distinct configurations $\hz$ corresponding to different
reduced symmetric configurations $\hz$: e.g., when \w[,]{d=5}, the convex pentagon and
the pentagram of Figure \ref{fpentagon} correspond to two different cyclic arrangements
of the petals of the bouquet of five such loops. It might be useful to think of the
common endpoints of all the closed loops as an infinitesimal reduced symmetric
configuration, in order to keep track of the orientations of the various loops.
This is illustrated by the five closed-loop configurations shown in Figure
\ref{ficosagoncl}, corresponding respectively to those of Figure \ref{ficosagon},
with the inner dashed symmetric configuration reduced to a point.

%
%
\begin{figure}[ht]
\centering{\includegraphics[width=0.8\textwidth]{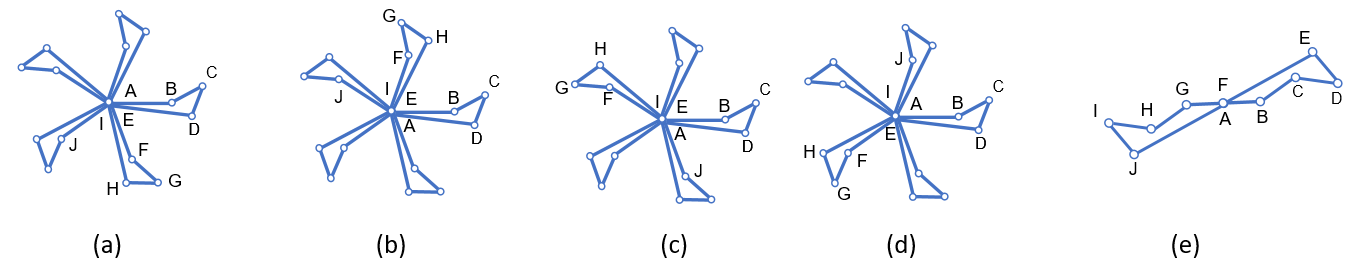}}
\caption{Closed loop configurations for the icosagon and decagon}
\label{ficosagoncl}
\end{figure}
\end{proof}

\begin{example}\label{egvertices}
  When \w[,]{d=6} the three possible configurations of the equilateral hexagon
  \w{\Gen{6}} which are invariant under the full automorphism group
  \w{D\sb{12}=\Aut(\Gen{6})} are the first three in the top row of
  Figure \ref{fhexvert} (see Remark \ref{rvertices}).
\end{example}

\begin{corollary}\label{ccycsymmcellp}
For $\Gamma$, \w{H=C\sb{d}} and \w{k=n/d}  as above, the interior of \w{\wCG\sp{H}}
has the same local parametrization as \w{\wC(\Gkl)} (see \S \ref{egopench}).
\end{corollary}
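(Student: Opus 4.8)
The plan is to deduce the statement directly from Proposition \ref{lsymmetcellcy} together with the explicit descriptions of \w{\hC(\Gkl)} and \w{\wC(\Gkl)} recalled in \S \ref{egopench}. By that proposition, \w{\wCG\sp{H}} is a disjoint union of components indexed by the discrete set \w[,]{\wC(\Gen{d})\sp{H}} each isomorphic to \w{\hC(\Gkl)} when the corresponding \w{\wz} is not collinear and to \w{\wC(\Gkl)} when it is. Since the interior of a disjoint union is the disjoint union of the interiors, it suffices to exhibit, on each such component, a local chart given by the angle coordinates that parametrize the interior of \w[.]{\wC(\Gkl)}

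First I would recall from \S \ref{egopench} that both \w{\hC(\Gkl)} and \w{\wC(\Gkl)} sit inside the \wwb{k-1}torus \w{(S\sp{1})\sp{k-1}} under the edge-angle parametrization \w[:]{(\theta\sb{1},\dotsc,\theta\sb{k-1})} the reduced space \w{\hC(\Gkl)} is the whole torus, while the fully reduced \w{\wC(\Gkl)} is the subspace in which the first edge off the axis points upward, whose interior is the open subset \w[.]{(0,\pi)\times(S\sp{1})\sp{k-2}} In particular the two spaces carry identical local coordinates near any interior point, namely the angle parameters \w[,]{\theta\sb{i}} differing only in their global extent. Combining these observations gives the corollary: whether a component of \w{\wCG\sp{H}} comes from a non-collinear \w{\wz} (a copy of \w[)]{\hC(\Gkl)} or from a collinear one (a copy of \w[),]{\wC(\Gkl)} each of its interior points has a neighborhood parametrized by the same coordinates \w{(\theta\sb{1},\dotsc,\theta\sb{k-1})} that parametrize the interior of \w[.]{\wC(\Gkl)}

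The only point requiring care, and the nearest thing to an obstacle, is to confirm that the isomorphisms furnished by Proposition \ref{lsymmetcellcy} carry the natural angle parametrization of the open subchain configuration \w{\hy} onto the angle coordinates that \w{\wCG\sp{H}} inherits from the arrow-diagram torus of $\Gamma$. This is immediate from the reconstruction used in that proof, where \w{\wx} is recovered from \w{\hy} by the geometric rotations (and reflection) of $H$; these merely permute and reflect the remaining edge angles of $\Gamma$ and so do not disturb the local angle coordinates supplied by \w[.]{\hy} The collinear configurations, at which \w{\hC(\Gkl)} is replaced by \w{\wC(\Gkl)} and boundary phenomena occur, are excluded by passing to the interior.
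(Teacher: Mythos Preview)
Your proposal is correct and follows precisely the approach the paper intends: the corollary is stated without proof in the paper, as an immediate consequence of Proposition \ref{lsymmetcellcy} together with the angle-coordinate description of \w{\hC(\Gkl)} and \w{\wC(\Gkl)} from \S \ref{egopench}, which is exactly what you spell out. Your added remark that the isomorphism of the proposition respects the edge-angle coordinates (since the reconstruction from \w{\hy} to \w{\wx} only rotates and permutes the remaining arrows) is a useful clarification that the paper leaves implicit.
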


The case where the cyclic subgroup \w{H\leq\AG} is generated by a rotation is in fact
the only one relevant to the \emph{reduced} \cspace\ \w[,]{\hCG} where we have
the following somewhat simpler result:

\begin{proposition}\label{lsymmetcellcyred}
If $\Gamma$ is a planar $n$-polygon and \w{H=C\sb{d}} is a rotation subgroup of
\w[,]{\AG} the fixed-point set \w{\hCG\sp{H}} is isomorphic to a disjoint union,
indexed by the discrete set \w[,]{\wC(\Gen{d})\sp{H}} of copies of \w[\vsm.]{\hC(\Gkl)}
\end{proposition}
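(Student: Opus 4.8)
The plan is to run the argument of Proposition~\ref{lsymmetcellcy} in the reduced setting, the only change being that we now divide solely by the orientation-preserving isometries \w[.]{\Eucp{2}} First I would identify the fixed-point set: a reduced configuration \w{\hx} lies in \w{\hCG\sp{H}} precisely when the generating relabelling \w{f} of \w{H=C\sb{d}} \wwh the cyclic shift by \w{k=n/d} \wwh is realized by some \w[,]{g\in\Eucp{2}} i.e.\ \w[.]{\hx\circ f=g\cdot\hx} Since $f$ has order $d$ and \w{\hx} is an embedding, \w{g\sp{d}} fixes every vertex, so $g$ must be a rotation of order $d$ about the barycenter. Thus \w{\hCG\sp{H}} consists exactly of the genuinely \wwd{d}fold rotationally symmetric reduced configurations.

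Next I would set up the skeleton/subchain decomposition. Writing \w[,]{k=n/d} I decompose such an \w{\hx} into (i) the reduced open-chain configuration \w{\hy\in\hC(\Gkl)} obtained by restricting to the fundamental subchain on \w[,]{A\sb{1},\dotsc,A\sb{k+1}} and (ii) the \emph{skeleton} \w[,]{\wz} namely the regular star polygon on the orbit points \w{g\sp{j}\hx(A\sb{1})} \wb[,]{j=0,\dotsc,d-1} regarded as a rotationally symmetric configuration of \w{\Gen{d}} whose common edge length is the distance between the two endpoints of \w[.]{\hy} Because all its vertex angles are equal, \w{\wz} is invariant under every rotation, hence under all of \w[,]{D\sb{2d}} and so lies in the discrete set \w[.]{\wC(\Gen{d})\sp{H}} Conversely, given any such \w{\wz} together with any \w[,]{\hy\in\hC(\Gkl)} I reconstruct \w{\hx} by placing \w{\hy} along one skeleton edge and propagating it by the rotation $g$; this reconstruction is manifestly continuous and inverse to the decomposition, giving the desired homeomorphism of each component of \w{\hCG\sp{H}} onto a copy of \w[.]{\hC(\Gkl)}

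The one substantive difference from Proposition~\ref{lsymmetcellcy} is the disappearance of the collinear exception, and this is the point I would emphasize. In the fully reduced space a collinear skeleton \w{\wz} admits reflection across its axis as an extra ambient symmetry, which identifies \w{\hy} with its mirror image and thereby collapses the fibre from \w{\hC(\Gkl)} down to \w[;]{\wC(\Gkl)} here we never quotient by reflections, so \w{\hy} and its mirror remain distinct and the fibre is the full \w{\hC(\Gkl)} in every case.

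The step I expect to demand the most care is confirming that the decomposition is a genuine homeomorphism onto \emph{full} components of \w[,]{\hCG\sp{H}} and in particular that the skeleton type $\wz$ is locally constant (so that the index set is honestly discrete). The delicate bookkeeping is again a consequence of reflections being unavailable in the reduced quotient: the two chiralities of a star-polygon skeleton are no longer identified as they are in the fully reduced case, and keeping track of exactly which such skeletons occur \wwh and whether chiral pairs do or do not merge \wwh is precisely the subtlety one must resolve to pin down the correct indexing of the components.
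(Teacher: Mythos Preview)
Your proposal is correct and matches the paper's approach: the paper gives no separate proof, presenting the statement as the ``somewhat simpler'' reduced analogue of Proposition~\ref{lsymmetcellcy}, and your adaptation (with the collinear exception removed because \w{\Eucp{2}} contains no reflections to collapse $\hy$ onto its mirror) is exactly what is intended. The chirality worry in your final paragraph dissolves once you note that any \wwd{C\sb{d}}symmetric configuration of \w{\Gen{d}} automatically has full dihedral symmetry (equal angles at every vertex force reflection invariance), so the natural map \w{\hC(\Gen{d})\sp{C\sb{d}}\to\wC(\Gen{d})\sp{C\sb{d}}} is a bijection and the indexing set is unambiguous.
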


\noindent\textbf{Case III.}\ Dihedral subgroups\vsm:

Let $\Gamma$ be a planar $n$-polygon, \w{H\cong D\sb{2d}} a
dihedral subgroup of \w[,]{\AG\subseteq D\sb{2n}} with \w[.]{d|n}
Choose two generators $\rho$ and $\sigma$ for $H$, which we may
identify with reflections in axes $k$ and $m$, respectively, in a
regular $n$-gon \w[.]{\Gn{n}} Here we identify $H$ with a subgroup
of \w[,]{D\sb{2n}} even though $\Gamma$ need not be equilateral,
in order to have a consistent description of its automorphisms
(acting on a fully reduced configuration by relabelling).

If $n$ is odd, each axis is necessarily a median (connecting a vertex of
$\Gamma$ to the midpoint of the opposite edge). If $n$ is even, the axis could be
a midpoint interval (connecting the midpoints of two opposite edges) or a diagonal
connecting two opposite vertices.

The generator $\rho$  has a ``basic subchain'' $\Delta$ of
$\Gamma$ on which it acts by reflection in $k$ (under
relabelling): this is depicted in the blue segment \w{AB\dotsc
B'A'} in either of the two diagrams of Figure \ref{fscdr}.

When $k$ ends in a vertex (e.g., $C$ on the right in Figure \ref{fscdr}),
we have a ``fundamental subchain'' \w{\Delta'} (\w{ABC} in our example), reflected
under $\rho$ to \w{\Delta''} (i.e., \w[),]{A'B'C} with $\Delta$ the union of
\w{\Delta'} and \w[.]{\Delta''}

When $k$ ends in the midpoint of $\Delta$ of an edge \w{CC'} of $\Gamma$
(e.g., \w{N\sb{0}} on the left in Figure \ref{fscdr}), the ``fundamental subchain''
\w{\Delta'} ends in $C$ (so \w{\Delta'=ABC} in our example), and $\Delta$ is the union of
\w[,]{\Delta'} its reflection \w[,]{\Delta''} and the middle segment \w[.]{CC'}

Similarly, the generator $\sigma$  has a ``basic subchain'' $\Theta$ with
``fundamental subchain'' \w{\Theta'} (given by \w{AZYX} in both diagrams of
Figure \ref{fscdr}).

\begin{theorem}\label{lsymmetcelldy}
Let $\Gamma$ be a planar $n$-polygon and \w{H\cong D\sb{2d}} is a dihedral subgroup of
\w[,]{\AG} generated by reflections in axes $k$ and $m$ in a regular $n$-gon
\w[.]{\Gn{n}}  The fixed-point set \w{\wC=\wCG\sp{H}} splits as a disjoint
union indexed by \w[.]{\wC(\Gen{d})\sp{H}}
For each \w[,]{\wz\in\wC(\Gen{d})\sp{H}} the corresponding component \w{\wC\sb{\wz}} of
$\wC$ fibers over an interval \w[.]{[0,L\sb{0}]}
The fiber over a value $L$ (the length of all edges of $\wz$) further fibers over
a closed interval \w{I=I\sb{L}} in the \w{\RR{}P\sp{1}\cong S\sp{1}} (the space of lines
in the plane through the barycenter \w[).]{\wz(O)} Finally, given a line
\w{\wx(k)} in $I$, let \w{\wx(m)} denote its rotation by an angle of \w{\pi/d} about
\w[;]{\wz(O)} the fiber of $\wC$ over \w{\wx(k)} is then isomorphic to
\w[.]{Y\sb{\wx(k)}\times Y'\sb{\wx(m)}}
\end{theorem}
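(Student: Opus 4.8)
The plan is to reduce to the cyclic case already settled in Proposition \ref{lsymmetcellcy} and then to peel off the two reflections one axis at a time. Since the rotation subgroup $C\sb{d}=\langle\rho\sigma\rangle$ sits inside $H\cong D\sb{2d}$ with index two, we have $\wC=\wCG\sp{H}\subseteq\wCG\sp{C\sb{d}}$, and Proposition \ref{lsymmetcellcy} already decomposes the latter as a disjoint union indexed by $\wC(\Gen{d})\sp{C\sb{d}}$. First I would observe (as in the proof of that proposition) that any $C\sb{d}$-invariant configuration of $\Gen{d}$ has all its angles equal, hence is automatically invariant under the full dihedral group, so $\wC(\Gen{d})\sp{C\sb{d}}=\wC(\Gen{d})\sp{H}$. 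The residual reflections therefore act within each component and fix each hub $\wz$, and passing to their common fixed locus yields the claimed splitting $\wC=\coprod\sb{\wz}\wC\sb{\wz}$.

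For a fixed hub $\wz$, the length fibration $\wC\sb{\wz}\to[0,L\sb{0}]$ records the common edge length $L$ of $\wz$, which by the description in Proposition \ref{lsymmetcellcy} equals the distance between the endpoints of the basic subchain $\Delta$. This map is manifestly continuous, with $L=0$ the collapsed (closed-loop) hub and $L\sb{0}$ the maximal extension of $\Delta$; the content lies in describing its fibers. Over a fixed $L$, an $H$-symmetric configuration must be geometrically symmetric about the \emph{lines} realizing the axes $k$ and $m$ through the barycenter $\wz(O)$, so the remaining freedom in positioning is the direction of axis $k$, i.e.\ a point $\wx(k)$ of the $\R P\sp{1}\cong S\sp{1}$ of lines through $\wz(O)$. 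As in Lemma \ref{lopenchk} and Proposition \ref{lreflsym}, not every direction can be realized, since the basic subchain must simultaneously reach both axes while keeping the prescribed length $L$; tracking the admissible directions exactly as in the proof of Lemma \ref{lopenchk} shows they form a closed interval $I\sb{L}$, giving the second fibration.

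Finally, fix the axis direction $\wx(k)$, which by the $D\sb{2d}$-geometry also fixes $\wx(m)$ as its rotation by $\pi/d$ about $\wz(O)$. The portion of the configuration lying in the fundamental sector between the two axes is an open chain running from axis $k$ to axis $m$ through the shared vertex $A$ of Figure \ref{fscdr}: the reflection $\rho$ recovers the rest of the basic subchain $\Delta$ from its fundamental half $\Delta'$ (ending on axis $k$), and $\sigma$ recovers $\Theta$ from $\Theta'$ (ending on axis $m$). I would then argue that once $\wz$, $L$, and the axis direction are fixed, the position of $A$ together with the global closure and barycenter conditions is already forced by the $H$-symmetry, leaving the configurations of $\Delta'$ and of $\Theta'$ as \emph{independent} open chains anchored at $A$ with terminal vertex (or terminal-edge midpoint) constrained to the respective axis. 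These two spaces are precisely the open-chain configuration spaces of $\Delta'$ and $\Theta'$ analyzed in Proposition \ref{lreflsym}, namely $Y\sb{\wx(k)}$ and $Y'\sb{\wx(m)}$, so the top fiber is their product.

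The hard part will be this last decoupling step: verifying that the barycenter normalization and the polygon-closure equation \eqref{eqsumsin} do not couple $\Delta'$ to $\Theta'$. The expectation is that $H$-symmetry forces both constraints to hold identically, since the vector sum over each reflection-symmetric pair of edges lies along the corresponding axis and the contributions cancel in complementary pairs; the only surviving conditions are then the separate ``endpoint-on-axis'' requirements, which is exactly what produces the clean product $Y\sb{\wx(k)}\times Y'\sb{\wx(m)}$. A secondary subtlety is the behaviour at the ends $L=0$ and along $\partial I\sb{L}$, where one factor degenerates; these boundary strata should be handled as the collinear and aligned cases in Propositions \ref{lsymmetcellcy} and \ref{lreflsym}.
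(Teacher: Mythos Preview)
Your overall architecture matches the paper's: index by the fully symmetric hub $\wz$, then fiber over the edge length $L$, then over the axis direction $\wx(k)\in I\sb{L}$, and finally describe the top fiber as a product coming from the two fundamental subchains $\Delta'$ and $\Theta'$. Reducing first through Proposition~\ref{lsymmetcellcy} is a reasonable reorganization, and your observation that $\wC(\Gen{d})\sp{C\sb{d}}=\wC(\Gen{d})\sp{H}$ is exactly what the paper uses (all vertex angles equal forces full dihedral symmetry of the hub).

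The genuine gap is in the step you yourself flag as ``the hard part.'' You propose to see the decoupling of $\Delta'$ and $\Theta'$ by arguing that, under $H$-symmetry, the closure equation \eqref{eqsumsin} and the barycenter condition hold identically because reflection-paired edge vectors cancel. That heuristic does not by itself explain why the two halves, each built independently with only an ``endpoint on its own axis'' constraint, glue to a \emph{closed} $n$-gon. The paper's mechanism is a specific geometric fact you are missing: once $\wz$ and $\wx(k)$ are fixed and $\wx(A)$ is defined as the reflection of $\wz(A')$ in $\wx(k)$, one checks that $\wx(A)$ lies on the circle circumscribing the regular $d$-gon $\wz$, and therefore $\wx(A)$ is \emph{simultaneously} the reflection of the adjacent hub vertex $\wz(A'')$ in $\wx(m)$. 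This is what makes the $\Theta$-half (obtained by reflecting $\Theta'$ in $\wx(m)$) land exactly at $\wz(A'')$, so that the concatenated chain $\Delta\cup\Theta$ runs from $\wz(A')$ to $\wz(A'')$; rotating this chain by multiples of $2\pi/d$ about $\wz(O)$ then visibly closes up the polygon. Without this circumscribed-circle observation your product description $Y\sb{\wx(k)}\times Y'\sb{\wx(m)}$ is only a set of candidate pairs, not yet identified with actual $H$-symmetric configurations of $\Gamma$.

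Two smaller points. First, the spaces $Y\sb{\wx(k)}$ and $Y'\sb{\wx(m)}$ are not the objects of Proposition~\ref{lreflsym}; they are introduced (and their structure analyzed case by case according to the distance $\mu$ from $\wz(A')$ to the axis) inside the proof of this very theorem, so you should define them rather than cite them. Second, Lemma~\ref{lopenchk} concerns parameterizing \emph{convex} reflection-symmetric configurations and does not produce the interval $I\sb{L}$; in the paper $I\sb{L}$ arises simply as the set of axis directions for which both $Y\sb{\wx(k)}$ and $Y'\sb{\wx(m)}$ are nonempty (equivalently, for which the relevant circles about $\wx(A)$ meet the axes), bounded by the fully stretched configuration at $L=L\sb{0}$.
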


\begin{corollary}\label{cdihsymmcellp}
For $\Gamma$, \w[,]{H=D\sb{2d}} and the two open chains \w{\Delta'} and \w{\Theta'}
as above, the interior of \w{\wCG\sp{H}} has a local parametrization given by
\w[.]{(0,L\sb{0})\times I\times\hC(\Delta')\times\hC(\Theta')}
\end{corollary}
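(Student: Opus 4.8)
The plan is to read off the interior structure directly from the iterated fibration established in Theorem~\ref{lsymmetcelldy}, and then to identify the two fibers $Y\sb{\wx(k)}$ and $Y'\sb{\wx(m)}$ with the reduced configuration spaces of the two fundamental subchains $\Delta'$ and $\Theta'$. In this sense the corollary is mostly bookkeeping once the theorem is in hand, parallel to the way Corollary~\ref{ccycsymmcellp} reads the interior parametrization off Proposition~\ref{lsymmetcellcy} in the cyclic case.

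First I would fix a component $\wC\sb{\wz}$ of $\wCG\sp{H}$ indexed by $\wz\in\wC(\Gen{d})\sp{H}$, and recall the three successive projections supplied by Theorem~\ref{lsymmetcelldy}: $\wC\sb{\wz}$ fibers over $[0,L\sb{0}]$; the fiber over a value $L$ fibers over the closed interval $I=I\sb{L}$ of lines through the barycenter $\wz(O)$; and the fiber over a line $\wx(k)\in I$ is the product $Y\sb{\wx(k)}\times Y'\sb{\wx(m)}$, where $\wx(m)$ is the image of $\wx(k)$ under rotation by $\pi/d$ about $\wz(O)$. To isolate the interior of $\wCG\sp{H}$ I would discard the boundary values of these bases: $L\in\{0,L\sb{0}\}$ gives a collinear or maximally extended skeleton $\wz$, and the endpoints of $I$ give limiting axis positions at which a subchain folds onto an axis. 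On the open part $L\in(0,L\sb{0})$, with $\wx(k)$ interior to $I$, the coarse data thus varies freely over $(0,L\sb{0})\times I$.

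The key step is the identification of the fibers. By its construction in Theorem~\ref{lsymmetcelldy}, $Y\sb{\wx(k)}$ consists of those configurations of the fundamental subchain $\Delta'$ which, after reflection in the geometric axis realizing $k$ along the line $\wx(k)$, extend to an $H$-symmetric configuration of the basic subchain $\Delta$. Exactly as in Proposition~\ref{lreflsym}, once the line $\wx(k)$ is fixed, reflecting any reduced configuration of the open chain $\Delta'$ about it yields a unique symmetric completion, and conversely every symmetric configuration of $\Delta$ restricts to such a half; in the interior, where no collapse occurs, this is a bijection, so $Y\sb{\wx(k)}\cong\hC(\Delta')$. Here the fiber is the \emph{reduced} configuration space, rather than the fully reduced one appearing in Proposition~\ref{lreflsym}, because the direction of the axis is now carried by the separate base parameter $I$, so the residual orientation freedom is retained in the fiber. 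The same argument applied to $\sigma$ and its axis $m=\wx(m)$ gives $Y'\sb{\wx(m)}\cong\hC(\Theta')$.

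Finally I would assemble the pieces. Over the interior none of the three fibrations degenerates: the skeleton $\wz$ is nondegenerate, $\wx(k)$ ranges in an open set, and both fundamental subchains realize generic, non-collinear configurations. Hence the iterated fibration is locally trivial there, and the successive base and fiber factors combine into the local product $(0,L\sb{0})\times I\times\hC(\Delta')\times\hC(\Theta')$. The main obstacle is the fiber identification of the previous paragraph: one must verify that the reflection correspondence is genuinely a homeomorphism onto $\hC(\Delta')$ \emph{precisely} on the interior (the degenerate boundary loci being exactly where it fails, or where the fiber drops to its fully reduced version, as the collinear case does in Proposition~\ref{lsymmetcellcy}), and that the product decomposition $Y\sb{\wx(k)}\times Y'\sb{\wx(m)}$ of the theorem survives intact as a local product once the two halves are re-expressed through the internal angles parametrizing $\hC(\Delta')$ and $\hC(\Theta')$ (cf.\ \S\ref{egopench}).
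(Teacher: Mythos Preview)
Your proposal is correct and follows essentially the same route as the paper: the corollary is not given a separate proof there, but is placed immediately after Theorem~\ref{lsymmetcelldy} and is meant to be read off from the iterated fibration constructed in that theorem's proof, exactly as you do (and in parallel with Corollary~\ref{ccycsymmcellp}). One small caution: your sentence ``this is a bijection, so $Y\sb{\wx(k)}\cong\hC(\Delta')$'' overstates the global picture---in the paper's analysis (case~(iii) of the proof of Theorem~\ref{lsymmetcelldy}) a reduced shape $\hu\in\hC(\Delta')$ generically admits \emph{two} placements with endpoint on the axis $\wx(k)$, so the correspondence is locally a finite cover rather than a bijection; but since the corollary only asserts a \emph{local} parametrization, this does not affect the conclusion.
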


\begin{proof}
Given any \w[,]{L>0} let \w{\Gen{d}} be an equilateral $d$-gon with edge length $L$
and let \w{\wz\in\wC(\Gen{d})\sp{H}} be a fully reduced fully symmetric configuration for
\w[.]{\Gen{d}} Let \w{A'} and \w{A''} be the first two vertices of the $d$-gon (so
\w{\wz(A')} is at the origin, \w{\wz(A'')} is in the positive direction of the $x$ axis,
and \w[).]{d(\wz(A'),\wz(A'')=L}

In order to determine a fully reduced symmetric configuration
$\wx$ for $\Gamma$ (invariant under relabelling in the given
subgroup $H$), we must choose a line \w{\wx(k)} through the
barycenter \w{\wz(O)} of $\wz$, which will serve as the axis of
the \emph{geometric} reflection realizing the action of $\rho$ by
reflecting the \emph{labels} in the ``combinatorial axis'' $k$.

We let \w{\wx(m)} be the line through \w{\wz(O)} forming an angle of
\w{\pi/d} with \w{\wx(k)} (realizing geometrically the reflection $\sigma$ in $m$),
and let \w{\wx(A)} be the reflection of the origin (which is \w[)]{\wz(A')}
in the geometric axis \w[.]{\wx(k)}

Note that \w{\wz(O)} is the center of the circle $\gamma$ circumscribing the regular
$d$-gon $\wz$ and \w{\wx(k)} bisects \w[,]{\angle\wz(A')\wz(O)\wz(A)} so
\w{\wx(m)} bisects \w[.]{\angle\wz(A'')\wz(O)\wz(A)} Since
\w[,]{d(\wz(O),\wx(A))=d(\wz(O),\wz(A'))}
\w{\wx(A)} is also on $\gamma$, so \w{\wx(A)} is also the reflection of
\w{\wz(A'')} in \w[.]{\wx(m)}

%
%
\begin{figure}[ht]
\centering{\includegraphics[width=0.7\textwidth]{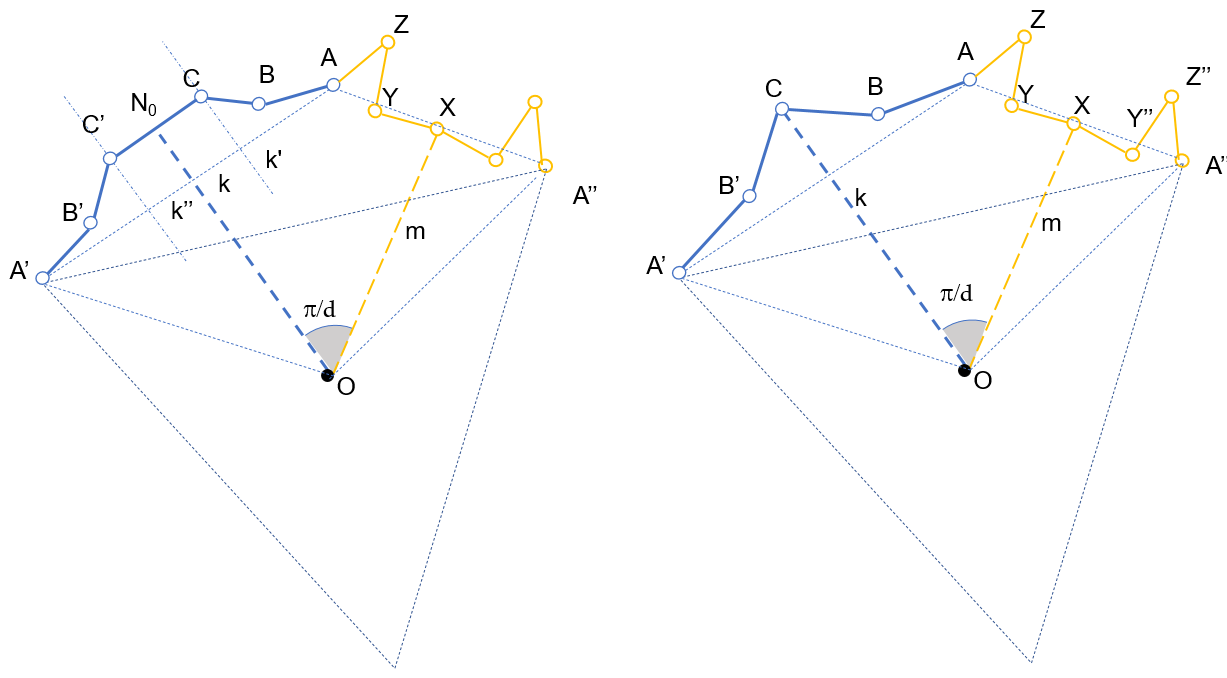}}
\caption{Symmetric configurations under double reflections}
\label{fscdr}
\end{figure}

In this situation, a pair of reduced configurations \w{(\hu,\hv)}
for the open chains \w{\Delta'} and \w[,]{\Theta'} respectively, generally will determine
(up to) four reduced configurations $\hy$ for the chain \w{\Delta\cup\Theta}
(the blue and yellow in Figure \ref{fscdr}), with endpoints \w{\hy(A')} and
\w{\hy(A'')} (such that \w[).]{d(\hy(A'),\hy(A'')=L}

To see how, we must distinguish two basic cases:

\begin{enumerate}
\renewcommand{\labelenumi}{(\alph{enumi})}
\item If the original axis $k$ (for the reflection $\rho$) ends in a vertex $D$,
  as on the right-hand side of in Figure \ref{fscdr}, the fundamental open subchain
  \w{\Delta'} of $\Gamma$ is \w[,]{(A,\dotsc,C)} say. Let
  \w{\lambda(\hu)=d(\hu(A),\hu(C))} be the length of a reduced configuration $\hu$ for
  \w{\Delta'} (a smooth function on the torus \w[).]{\hC(\Delta')} The circle
  \w{\gamma\sb{\hu}} of radius \w{\hu(\lambda)} about \w{\wx(A)} generally intersects
  \w{\wx(k)} in two points \w{\wx'(C)} and \w{\wx''(C)} (which coincide if
  \w[).]{d(\wx(A),\wx(k))=\lambda(\hu)} If this happens, we say that the reduced
  configuration $\hu$ of \w{\Delta'} is \emph{allowable} with respect to
  \w[.]{(\wz,\wx(k))}

  Allowable configurations $\hv$ of \w{\Theta'} are defined similarly if the axis $m$ for
  $\sigma$ ends in a vertex of $\Gamma$, when the circle \w{\gamma\sb{\hv}} of
  radius \w{\mu(\hv)} about \w{\wx(A)} intersects \w[.]{\wx(m)}
\item If $k$ ends in the midpoint \w{N\sb{0}} of an edge of $\Gamma$ (of length
  \w[,]{\ell\sb{i}} say), as on the left hand side of in Figure \ref{fscdr}, let
\w{\wx(k')} be the line parallel to \w{\wx(k)} at a distance of \w{\ell\sb{i}/2}
on the same side as \w[.]{\wx(A)} In this case, a reduced configuration
  $\hu$ of \w{\Delta'} of distance \w{\lambda(\hu)=d(\hu(A),\hu(D))} will be
  \emph{allowable} with respect to \w{(\wz,\wx(k))} if the circle \w{\gamma\sb{\hu}}
  intersects \w[.]{\wx(k')}  Similarly for \w{\Theta'} if the axis $m$
  ends in a midpoint.
\end{enumerate}

A pair of allowable reduced configurations \w{(\hu,\hv)}
for the open chains \w{\Delta'} and \w[,]{\Theta'} determines a reduced configuration
$\hy$ for the chain \w[,]{\Delta\cup\Theta} by letting \w{\hy(C)} be one of the two
intersections of the circle \w{\gamma\sb{\hu}} with \w[,]{\wx(k)} and similarly for
the endpoint of \w[.]{\Theta'}

When \w[,]{L=0} we think of $\wz$ as the unique infinitesimal
  fully symmetric configuration for \w[.]{\Gen{d}}  We may then take \w{\wx(k)} to
  be the $x$-axis, say. This determines \w[,]{\wx(m)}  and of course, \w{\wx(A)}
  will remain at the origin.

Each reduced configuration $\hy$ for the chain \w{\Delta\cup\Theta} yields a unique fully
reduced configuration $\wx$ in the fixed-point set \w[,]{\wCG\sp{H}} by \emph{rotating}
$\hy$ about $\wz$, since by comparing angles, we see that the continuation of
\w{\wx(k)} beyond \w{\wz(O)} is a rotation of
\w[,]{\wx(m)} and conversely. See Figure \ref{ffscdr}.

%
%
\begin{figure}[ht]
\centering{\includegraphics[width=0.45\textwidth]{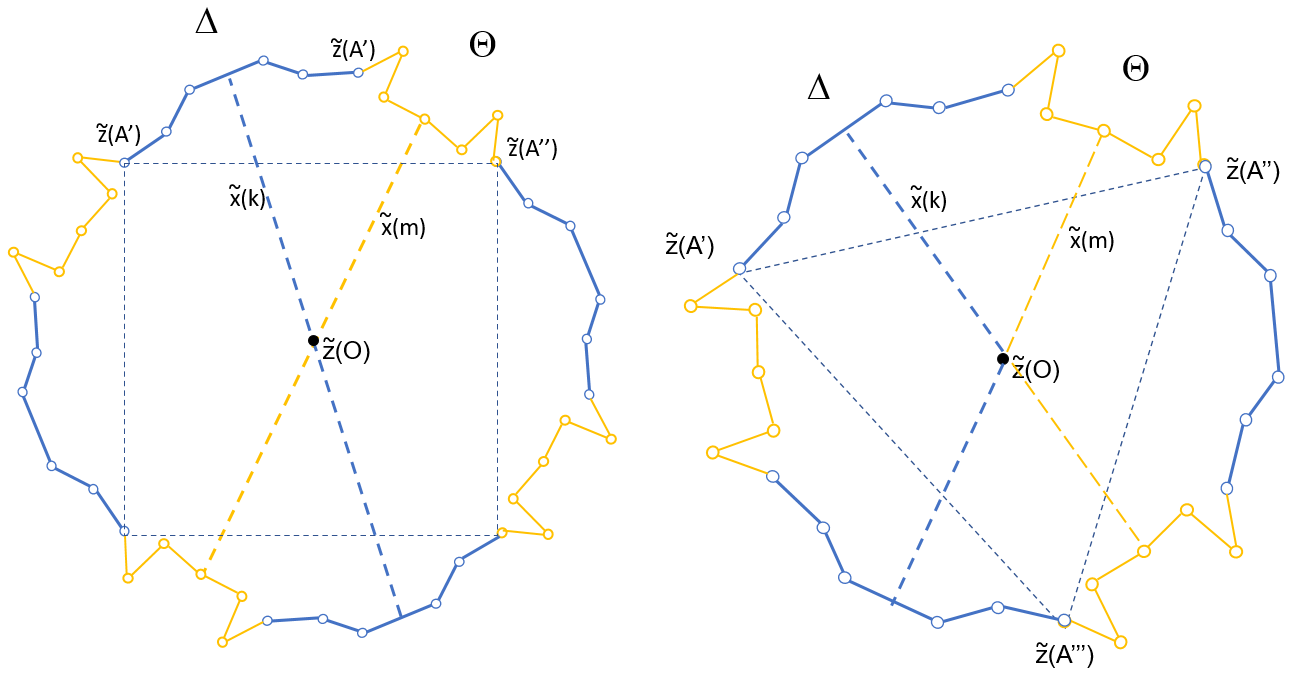}}
\caption{Fully symmetric configurations under double reflections}
\label{ffscdr}
\end{figure}

Let \w{\Gen{d}} be an equilateral $d$-gon with edge length $L$
and let \w{\wz\in\wC(\Gen{d})\sp{H}} be a fully reduced fully symmetric configuration
for \w[,]{\Gen{d}} as above.

Each line \w{\wx(k)} through \w{\wz(O)} determines a subspace \w{Y\sb{\wx(k)}} of
the pointed \cspace\ \w{\Cs(\Delta')} (a torus), consisting of all configurations
$\bu$ for \w{\Delta'} (starting at the origin) whose end-point \w{\bu(C)} lies on
$$
\begin{cases} \wx(k) & \text{when \w{\Delta'} has an even number
      of edges}\\
\wx(k'')&\text{when \w{\Delta'} has an odd number of edges}
\end{cases}
$$
\noindent See the right and left diagrams in Figure \ref{fscdr}, respectively.

We let \w{\wx(m)} be the line through \w{\wz(O)} forming an
angle of \w{\pi/d} with \w[,]{\wx(k)} and the subspace \w{Y'\sb{\wx(m)}} of
\w{\Cs(\Theta')} is defined analogously, with \w{\wx(m)} replacing \w[.]{\wx(k)}

To identify these subspaces, let \w{\mu=\mu(\wx(k))} denote the distance of the
origin \w{\wz(A')} from \w[,]{\wx(k)} and let
\w[,]{\mu\sb{\vel}:=\ell\sb{1}+\dotsc+\ell\sb{j}} where
\w{\vel=(\ell\sb{1},\dotsc,\ell\sb{j})} be the length vector of \w[:]{\Delta'}

\begin{enumerate}\renewcommand{\labelenumi}{(\roman{enumi})}
\item If \w[,]{\mu>\mu\sb{\vel}} clearly \w[.]{Y\sb{\wx(k)}=\emptyset}
\item \w[,]{\mu=\mu\sb{\vel}} then \w{Y\sb{\wx(k)}} consists of a single point:
the fully stretched configuration.
\item If \w[,]{0<\mu<\mu\sb{\vel}} we see that \w{Y\sb{\wx(k)}} is isomorphic to the
disjoint union of two copies of the reduced \cspace\ \w[,]{\hC(\widehat{\Delta'})}
where \w{\widehat{\Delta'}} is the \emph{closed} chain having length vector
\w[:]{\widehat{\vel}=(\ell\sb{1},\dotsc,\ell\sb{j},\mu)}
this is because for each reduced configuration \w[,]{\hw\in\hC(\widehat{\Delta'})}
we obtain two reduced configurations
\w{\hu\sb{1},\hu\sb{2}\in\hC(\widehat{\Delta'})} by rotating $\hw$ about the origin
so that the endpoint \w{\hw(C)} of \w{\Delta'} lies at one of the two intersections
of the circle of radius $\mu$ about the origin with \w[.]{\wx(k)}
\item If \w{\mu=0} \wwh that is, \w{\wx(k)} passes through the origin \wh
\w{Y\sb{\wx(k)}} decomposes into two complementary subspaces:
\begin{enumerate}\renewcommand{\labelenumii}{\arabic{enumii}.~}
\item \w[,]{Y\sb{0}} consisting of those configurations \w{\hu\in\Cs(\Delta')} for which
  \w{\bu(C)} is at the origin. This  may be identified with pointed configurations for
  the closed chain \w{\Gkc{j}} with length vector $\vel$, so
  \w[,]{Y\sb{0}\cong\wC(\Gkc{j})\times S\sp{1}} where the parameter
  \w{\phi\in S\sp{1}} determines the rotation of the reduced configuration
  \w{\hu\in\wC(\Gkc{j})} about the origin  (see \S \ref{sparamcs}).
\item \w[,]{Y\sb{1}} consisting of pointed configurations \w{\hu\in\Cs(\Delta')}
  not ending at the origin. These are again determined by rotating any reduced
  configuration $\hu$ for \w{\Delta'} about the origin, till its endpoint lies on one
  of the two intersections of the circle of radius \w{\lambda(\hu)} about the origin
  with the line \w[.]{\wx(k)}
\end{enumerate}
Note that \w{\Cs(\Delta')} is canonically isomorphic to \w{\wC(\Delta')\times S\sp{1}}
(see \S \ref{srestr}), which explains how both \w{Y\sb{0}} and \w{Y\sb{1}} embed
in \w[.]{\Cs(\Delta')}
\end{enumerate}

 We see that given \w{(\wz,L)} as above, the pair of configurations \w{(\hu,\hv)} for
 \w{\Delta'} and \w[,]{\Theta'} respectively, is allowable if and only if
\w{\hu\in Y\sb{\wx(k)}} and \w[.]{\hv\in Y'\sb{\wx(m)}}
Note that the maximal value of $L$ for which such allowable pairs exist is
\w[,]{L=L\sb{0}:=2\mu\sb{\vel}+2\mu'\sb{\vel'}+\nu} where $\nu$ is the sum of the lengths
of the middle edges of $\Delta$ and $\Theta$ (if these have an odd number of edges,
as in the right picture in Figure \ref{ffscdr}). In this case, there is a unique allowable
pair, yielding a fully stretched configuration for \w[.]{\Delta\cup\Theta}
\end{proof}

%
%
\section{Triangulating a cell for the hexagon}
\label{cfch}

As noted in Remark \ref{rscell}, a full \wwd{\AG}equivariant cell structure for
the fully reduced \cspace\ \w{\wCG} of a polygon $\Gamma$ requires a refinement
of the regular cell structure of Section \ref{capp}. We illustrate some of the issues
involved by considering a single regular cell \w{E=E\sb{\Id}} of strictly convex
configurations for the equilateral hexagon \w[.]{\Gamma=\Gen{6}} Note that $E$
itself is a bipyramid with six triangular facets, as in Figure \ref{fbipyr}
(in which the outer edges are to be identified pairwise as indicated).

%
%
\begin{figure}[ht]
\centering{\includegraphics[width=0.6\textwidth]{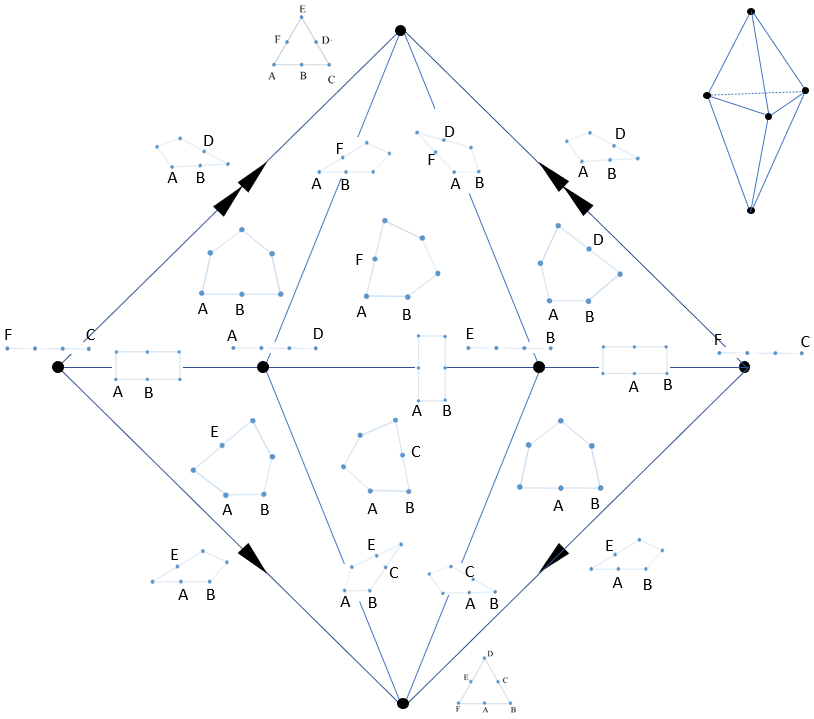}}
\caption{Bipyramid for the equilateral hexagon}
\label{fbipyr}
\end{figure}

\begin{remark}\label{rvertices}
The vertices of \w{\wCG} are determined by a combination of symmetries and
straightenings or foldings, which suffice to determine a rigid
configuration. The full list of all vertices for the equilateral hexagon in the regular
cell structure of Section \ref{capp} are of eleven types, depicted in
Figure \ref{fhexvert} (although, as we see in Figure \ref{fbipyr}, the same type may
appear with different labellings).

%
%
\begin{figure}[ht]
\centering{\includegraphics[width=0.9\textwidth]{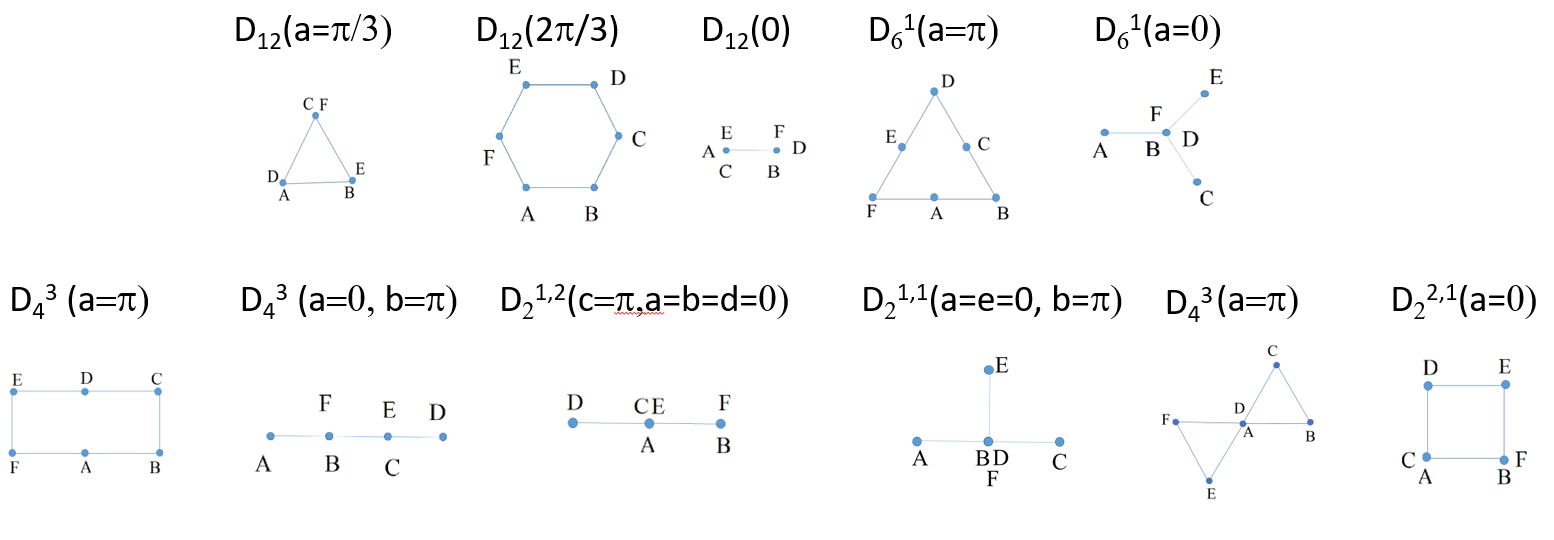}}
\caption{The vertex types of \w{\wCG} for the equilateral hexagon}
\label{fhexvert}
\end{figure}
\end{remark}

\begin{subsection}{The subdivided bipyramid}\label{ssubbipyr}
The bipyramid \w{E=E\sb{\Id}} of (strictly) convex configurations for
the equilateral hexagon \w[,]{\Gamma=\Gen{6}} should be subdivided into twelve
tetrahedra, which are permuted among themselves by action of \w{\AG} on the labels.
In accordance with the principles of \S \ref{sfinecell}, each tetrahedron
is determined by specifying the largest of the six angles of $\Gamma$,
and then choosing which of the two angles adjacent to it should be smaller.

This in Figure \ref{fhextetr} (on the left) we have required that the angle
\w{\theta\sb{B}} (labelled by $B$) should be the greatest, and that
\w[.]{\theta\sb{A}<\theta\sb{C}} One can then determine the induced inequalities
\w[,]{\theta\sb{A}<\theta\sb{D}<\theta\sb{F}} and
\w[,]{\theta\sb{C}<\theta\sb{E},\theta\sb{F}} as indicated in the lower left corner of
Figure \ref{fhextetr}.

%
%
\begin{figure}[ht]
\centering{\includegraphics[width=0.8\textwidth]{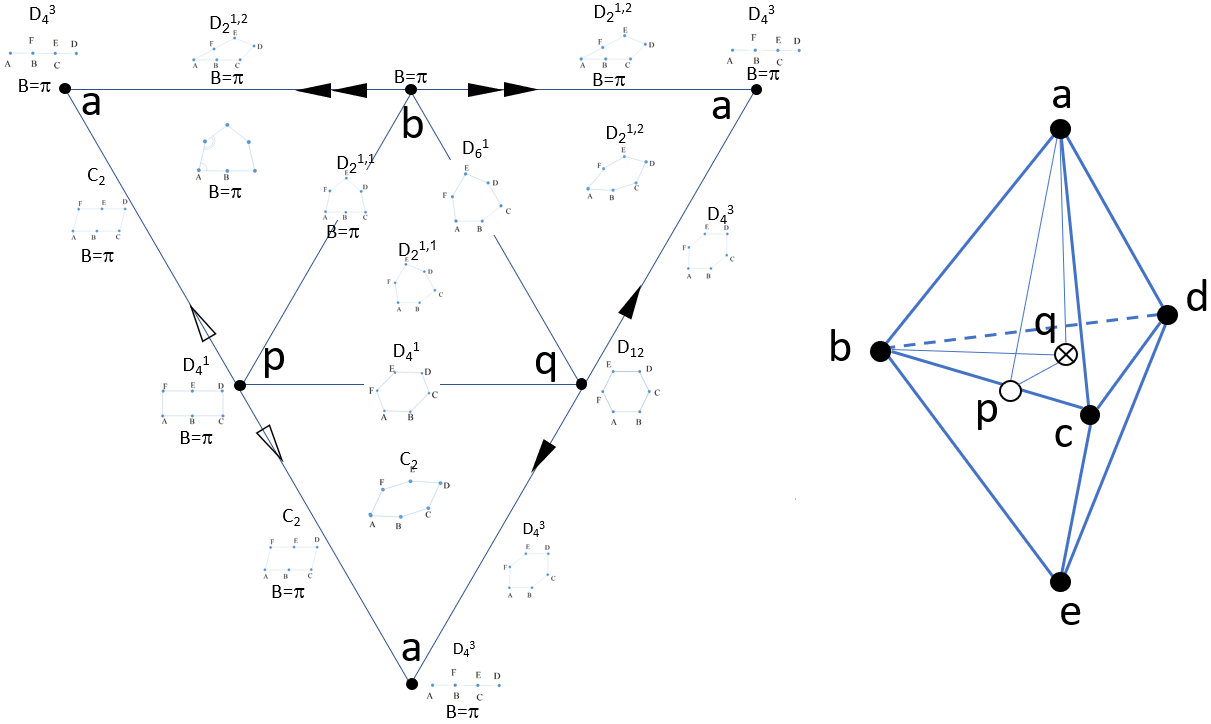}}
\caption{Fine $3$-cell for the equilateral hexagon \w{\Gamma=\Gen{6}} (one of twelve)}
\label{fhextetr}
\end{figure}

The boundary of the tetrahedron \w{abpq} consist of four triangular facets:

\begin{enumerate}
\renewcommand{\labelenumi}{(\alph{enumi})}
\item The boundary triangle \w{\triangle abp} is determined by the requirement
  that \w{\theta\sb{B}=\pi} (a straightening, which we abbreviate to \w{B=\pi} in the
  figure), so it is an open cell of the fully-reduced \cspace\ of the pentagon with length
  vector \w[.]{\vel=(2,1,1,1,1)} In turn its boundary consists of:
\begin{enumerate}
\renewcommand{\labelenumii}{\roman{enumii}.}
\item The edge \w[,]{ab} corresponding to the further straightening \w[,]{F=\pi}
  yields a deltoid of sides \w{\{2,1\}} and symmetry group \w{C\sb{2}}
  (corresponding to the subgroup \w{D\sb{2}\sp{1,2}} of \w{\AG=D\sb{12}} in
  Figure \ref{fsubgplat}, generated by the reflection in the diagonal \w{BE} of the
  hexagon).
\item The edge \w[,]{ap} corresponding to the straightening \w{E=\pi} yields
  a parallelogram of sides \w{\{2,1\}} and symmetry group \w{C\sb{2}} (corresponding
  to the subgroup \w{C\sb{2}<D\sb{12}} generated by the rotation by \w[).]{180\sp{o}}
\item The edge \w[,]{bp} corresponding to the symmetric version of the
  \wwd{(2,1,1,1,1)}pentagon, with \wwd{C\sb{1}}symmetry corresponding to
  \w[.]{D\sb{2}\sp{1,1}<D\sb{12}}
\end{enumerate}
\item The central triangle \w{\triangle bpq} in Figure \ref{fhextetr} is determined by
  requiring invariance under the subgroup \w{D\sb{2}\sp{1,1}} mentioned above.
\item The upper right triangle \w{\triangle abq} is invariant under the subgroup
\w{D\sb{2}\sp{1,2}} (generated by reflection in the diagonal \w[),]{AD} so the
common edge with the central triangle has symmetry group \w{D\sb{6}\sp{1}}
(generated by the two reflections and thus including the rotation by \w[).]{120\sp{o}}
\item The bottom triangle \w{\triangle apq} is invariant under the subgroup \w{C\sb{2}}
  (generated by the  \w{180\sp{o}} rotation. The edge \w{pq} consists of
  configurations invariant under the subgroup \w{D\sb{6}\sp{1}} of \w{D\sb{12}}
  generated by the reflections in \w{BF} and the median connecting the midpoints
  of \w{AF} and \w[.]{CD}
\end{enumerate}
\end{subsection}

\begin{remark}\label{rsubdivision}
  As noted above, the tetrahedron on the left of Figure \ref{fhextetr} appears as one of
  twelve subcells in the bipyramid of Figure \ref{fbipyr}, obtained by a barycentric
  subdivision as on the right in Figure \ref{fhextetr}: specifically, the upper left
  facet labelled \w{B=\pi} in Figure \ref{fhextetr} is one half of the upper left facet
  of Figure \ref{fbipyr}, ending at the center of the lower edge of the latter (the
  vertex corresponding to the rectangle marked \w{B=\pi} and \w{D\sb{4}\sp{1}} in the
  former).
  The vertex marked \w{D\sb{12}} in the tetrahedron is the barycenter $q$
  of the bipyramid in Figure \ref{fhextetr}, corresponding to the regular hexagon
  configuration. Observe that all other facets of the
  tetrahedron are symmetric \wh that is, fixed under an appropriate subgroup of
  \w[,]{\AG} as indicated in Figure \ref{fhextetr} \wh and are thus internal membranes
  of the bipyramid, in the language of \S \ref{dmembrane}.
\end{remark}

%
%
\section{The equilateral pentagon}
\label{ceqpent}

We now analyze in detail the case of the equilateral pentagon \w[.]{\Gen{5}}
Recall that \cite[\S 2.4]{HavD} identifies the reduced \cspace\ of \w{\Gen{5}}
(that is, \w{\hC(\Gen{5})} modulo orientation-preserving isometries)
as a genus $4$ oriented surface (see also \cite{KMillM}),
while \cite{KamiTE} shows that the fully reduced \cspace \w{\wCGn{5}} of
\S \ref{srestr} is the connected sum of five projective planes.
Note that Remark \ref{rfullyred} applies in this case.

\begin{subsection}{Cells for the pentagon}\label{scpent}
An analysis of the possible arrow diagrams for an equilateral pentagon shows
that there are only four dihedral types: the first, third, fourth, and fifth  in
Figure \ref{fpentagon}. Note that the first and second have the same dihedral type,
but different cyclic types (with reversed orientations), as we see from the
corresponding configurations (which are equivalent in the fully reduced
\cspace \w[,]{\wCG} but not in \w[).]{\hCG} The fourth and sixth also have the
same dihedral types, but distinct cyclic types.

In order to get a better grasp of the fine cell structure, it is convenient to use
here a slightly different labelling system, corresponding to open intervals of
allowable values for each of the five angles between consecutive edges
(as in \S \ref{sopench}).
We indicate the range \w{0<\theta<\pi} by $-$, and \w{\pi<\theta<2\pi} by $+$ (using
the convention of \S \ref{spcsp}).
Each sequence of the form \w[,]{(++---)} for example, defines a unique cell, except
for \w[,]{(-----)} which corresponds to two distinct cells as indicated in
Figure \ref{fpentagon} (where all five types are shown). The cell marked \w{(-----)}
has smallest angle  \w[,]{\geq 2\arcsin(0.25)\approx 0.5053} while the cell
marked \w{(-----)'} has largest angle \w{\leq \pi/3} (with dual conditions for
\w{(+++++)} and \w[).]{(+++++)'}

%
%
\begin{figure}[ht]
\centering{\includegraphics[width=1.0\textwidth]{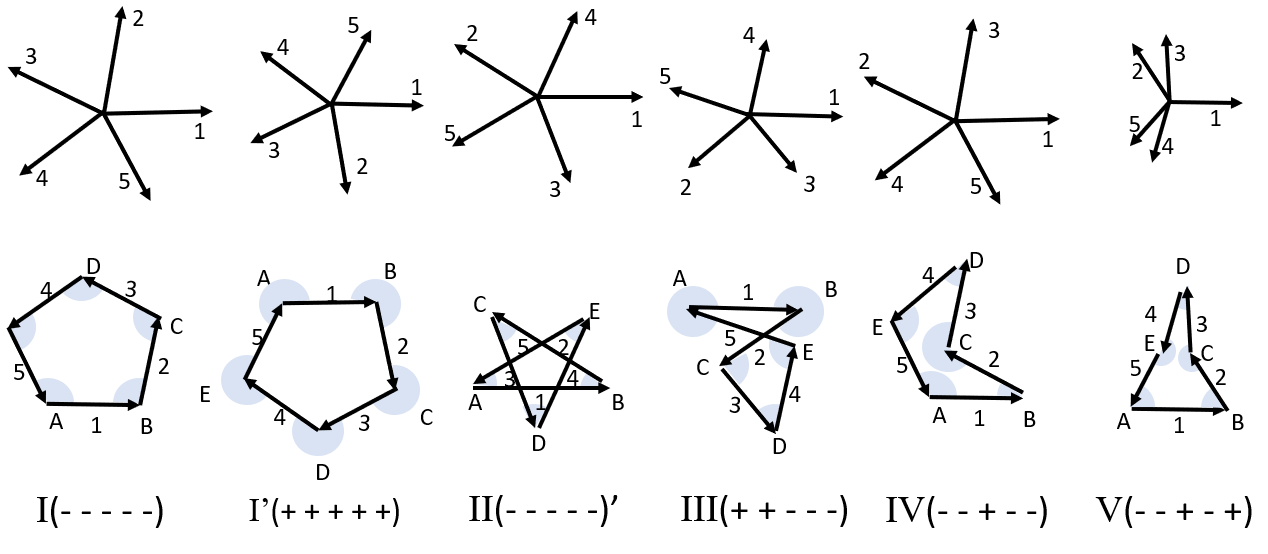}}
\caption{Cells for pentagon}
\label{fpentagon}
\end{figure}

Note that switching all signs corresponds to reversing the cyclic
order, while a cyclic shift in the sequence corresponds to a
cyclic shift in the labelling. Thus (in the order in which the
appear from left to right in Figure \ref{fpentagon}) we have\vsm :

\noindent I.\  One (pentagonal-shaped) cell for the convex pentagon \w{(-----)}
(as in Figure \ref{fmmmmmsym} below);

\noindent I'.\  An analogous (pentagonal-shaped) cell for the convex pentagon
\w[;]{(+++++)}

\noindent II.\  One (pentagonal-shaped) cell for each of the pentagrams \w{(-----)'}
and \w[,]{(+++++)'}

\noindent III.\  Five (hexagonal-shaped) cells: \w[,]{(++---)} \w[,]{(++---)}
  \w[,]{(-++--)} \w[,]{(--++-)} and \w[,]{(---++)} for the middle type, as in
  Figure \ref{fmmppm} below).

Similarly, we have five (hexagonal-shaped) cells for the reverse order
(which looks identical if we disregard the direction in which the angles are measured).

\noindent IV.\  Five (hexagon-shaped) cells of type \w[,]{(+----)} et cetera
(see Figure \ref{fmmpmm})

\noindent V.\  Five triangular cells of type \w[,]{(++-+-)} et cetera
(see Figure \ref{fmppmp} below)\vsm.

%
%
\begin{figure}[ht]
\centering{\includegraphics[width=0.6\textwidth]{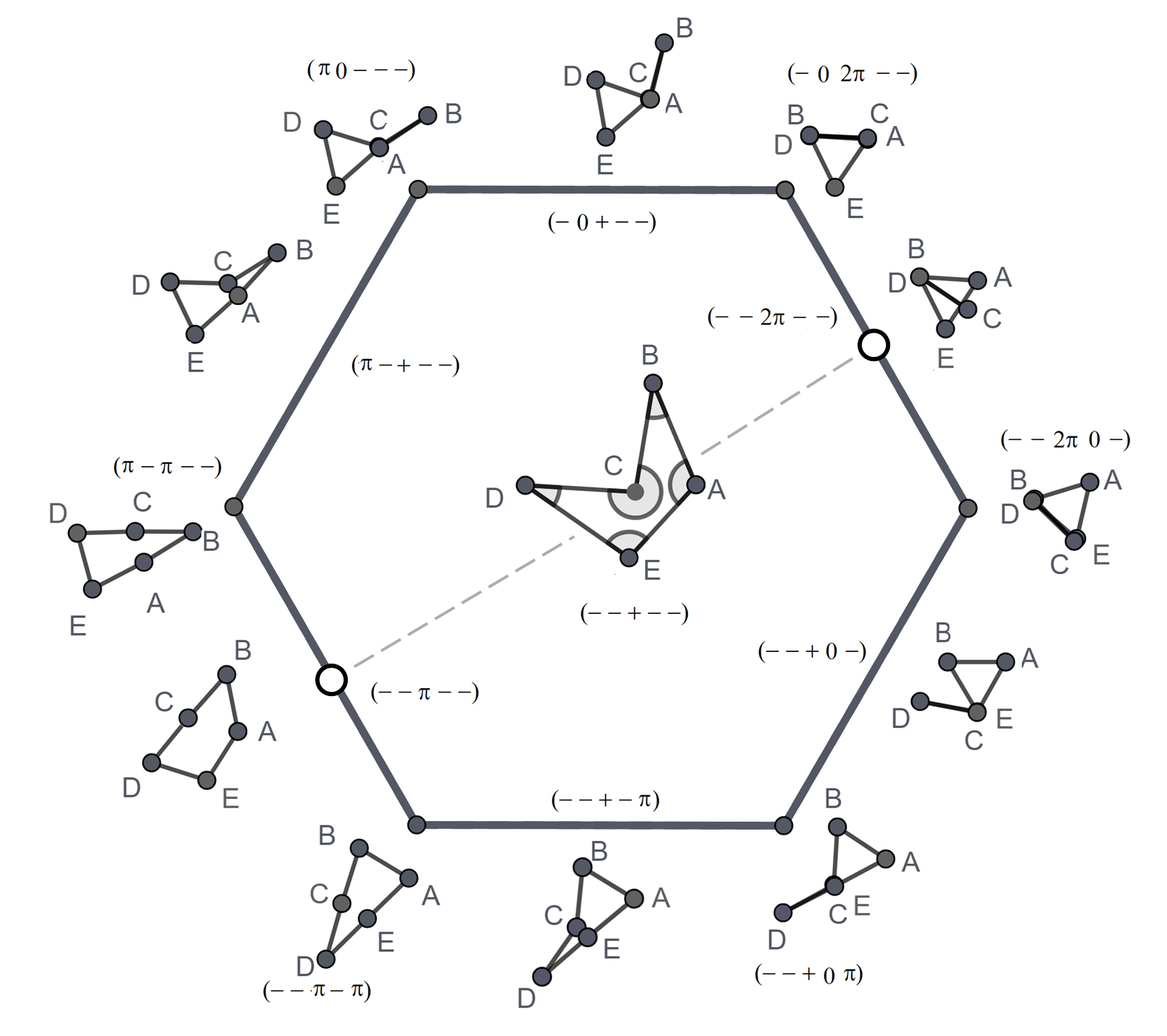}}
\caption{Cell $(--+--)$}
\label{fmmpmm}
\end{figure}
\end{subsection}

\begin{subsection}{Boundaries of the cells}\label{sboundc}
  As noted above, one should think of the 32 cells constituting the reduced \cspace\
  for the equilateral pentagon as polygonal cells (triangles, pentagons, or hexagons),
  identified along common edges. The edges of each such polygonal cell are obtained
  by a collineation:
either straightening one of the angles to $\pi$, or folding it to $0$ (if it was a $-$)
or \w{2\pi} (if it was a $+$). Each vertex $P$ of a cell is obtained by a double
collineation, corresponding to the two edges meeting at $P$.

It is possible to describe explicitly the rules for the allowable collineations and double
collineations (for example, one cannot have two adjacent straightenings), but as these are
particular to the case \w[,]{n=5} we leave them to the reader, illustrating them in
Figures \ref{fmmpmm}, \ref{fmppmp}, \ref{fmmppm}, and \ref{fmmmmmsym}.
\end{subsection}

\begin{subsection}{Symmetries of the equilateral pentagon}\label{ssymep}
  Each of the five types of cells corresponding to the configurations in
  Figure \ref{fpentagon} are exchanged among themselves by the obvious cyclic rotations
  or reflections of the vertices, so only one of each type is needed for the fundamental
  domain of the symmetric \cspace.
However, there are also symmetries acting on each cell. For example,
the dashed line across the hexagon in Figure \ref{fmmpmm} represents an axis of symmetry,
and indeed the two halves of the hexagon are exchanged under the reflection fixing $C$, with
\w{A\leftrightarrow E} and \w[.]{B\leftrightarrow D} The upper left half of the hexagon
corresponds to the linear ordering of the angles
\w[,]{\gamma>\alpha>\varepsilon>\delta>\beta} while the lower right half corresponds
to \w[,]{\gamma>\alpha>\varepsilon>\beta>\delta}

The cell for \w{(-----)} is a pentagon, but in this case there is a
tenfold symmetry \wh as shown in Figure \ref{fmmmmmsym}. Here each
triangular slice of the pentagonal cell corresponds to a certain
linear ordering of the angles of our equilateral pentagon
\w{\Gamma=\Gen{5}} (shown on the right of Figure \ref{fmmmmmsym}).
Each triangular section has one vertex at the center of the cell
(the regular pentagon configuration), one at a vertex of cell
(corresponding to two collineations), and one at the unique
(isosceles) trapezoid configuration with one collineation, which
is the midpoint of an edge. The dashed sides of each slice are
obtained by changing one inequality to an equality. Compare to the subdivided
bipyramid on the right in Figure \ref{fhextetr}.

%
%
\begin{figure}[ht]
\centering{\includegraphics[width=0.7\textwidth]{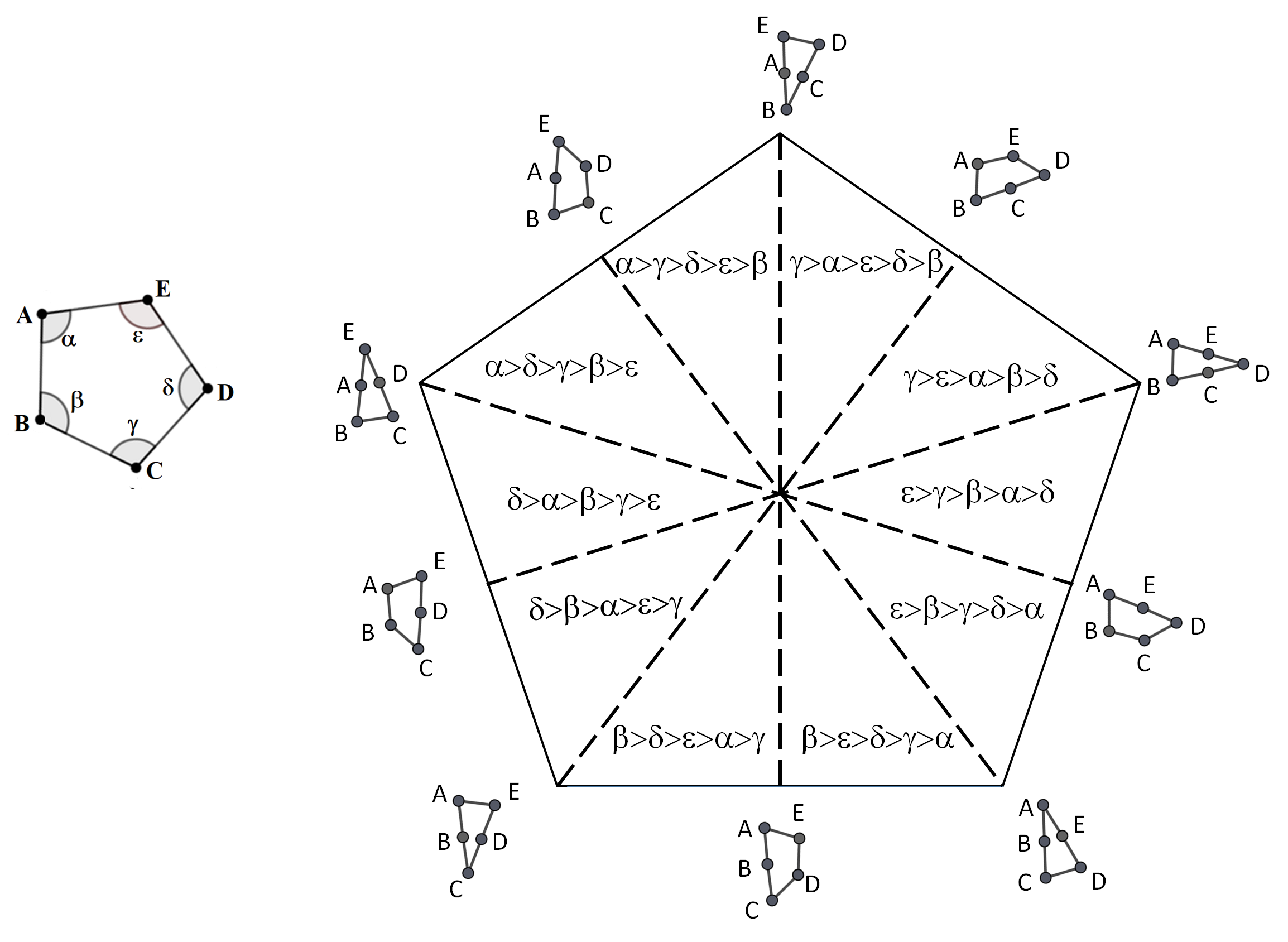}}
\caption{Symmetries of the cell for $(-----)$}
\label{fmmmmmsym}
\end{figure}

The cell for \w{(-----)'} (second from the left in Figure \ref{fpentagon}) is also a
pentagon, similarly divided into 10 triangular regions, as shown in
Figure \ref{fmmmmmosym}.

%
%
\begin{figure}[ht]
\centering{\includegraphics[width=0.6\textwidth]{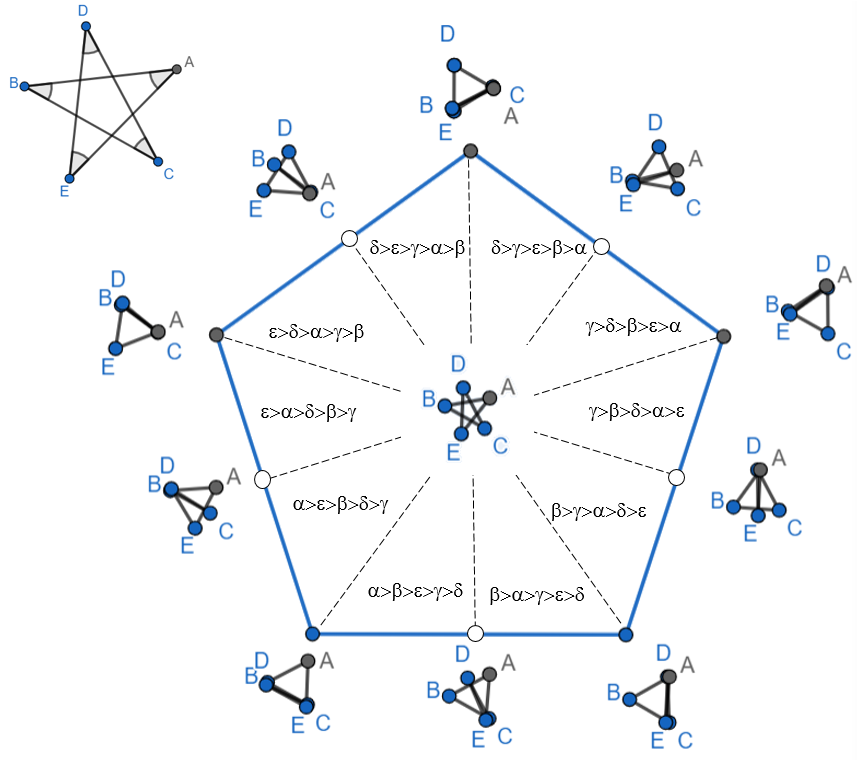}}
\caption{The cell for $(-----)'$}
\label{fmmmmmosym}
\end{figure}

The cell for \w{(++---)}
(third from the left) is pentagonal, divided into two halves (see Figure \ref{fmmppm}),
but with the bisector connecting a vertex to the midpoint of the edge opposite.

%
%
\begin{figure}[ht]
\centering{\includegraphics[width=0.7\textwidth]{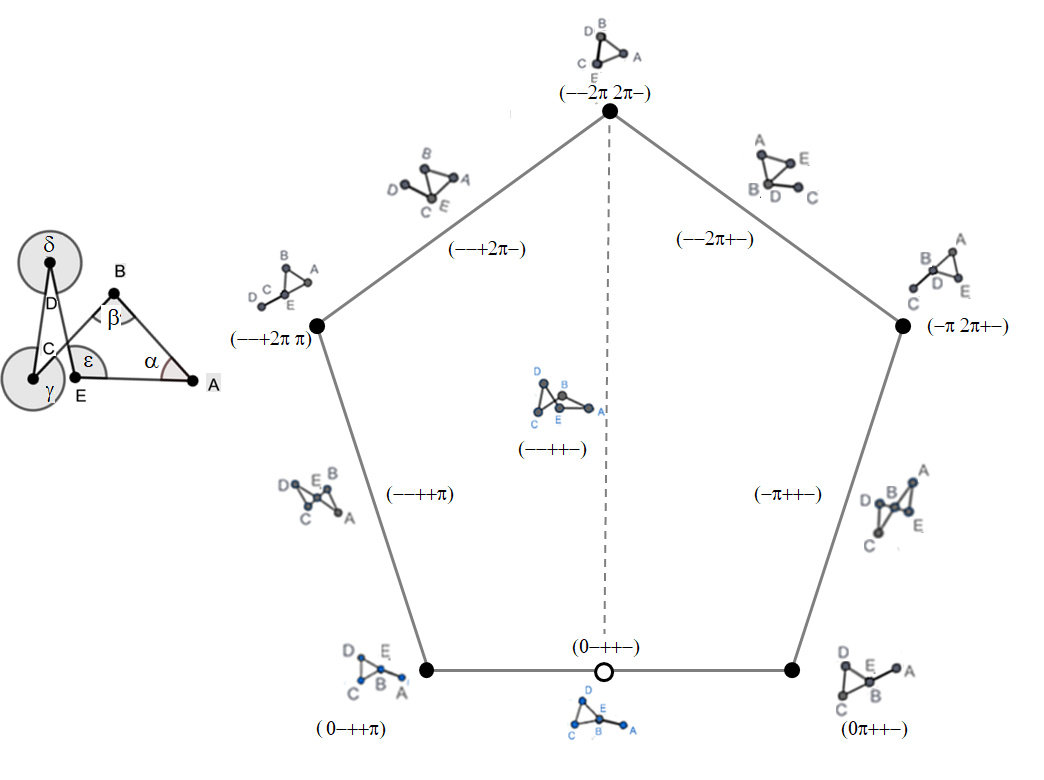}}
\caption{The cell for $(--++-)$}
\label{fmmppm}
\end{figure}

The cell for \w{(++-+-)} (on the right) is a triangle, similarly subdivided into two
halves, as in Figure \ref{fmppmp}.
%
%
\begin{figure}[ht]
\centering{\includegraphics[width=0.4\textwidth]{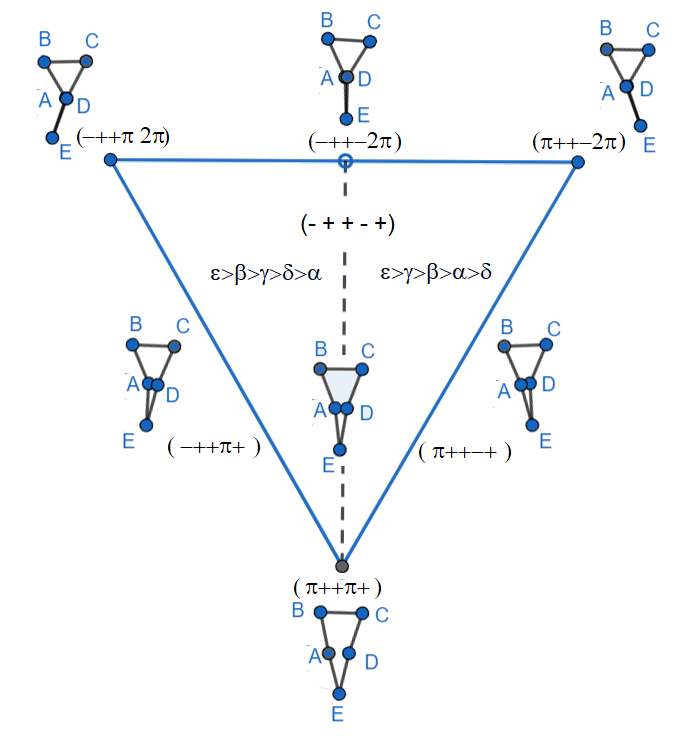}}
\caption{The cell for $(-++-+)$}
\label{fmppmp}
\end{figure}
\end{subsection}

\begin{subsection}{The symmetric configuration space of the pentagon}\label{sscfp}
  From the above discussion we see that a fundamental domain $\Fc$ for the action of
  \w{\Aut(\Gn{5})} on the reduced \cspace\ \w{\hC(\Gn{5})}, depicted in
  Figure \ref{ffund}, is the union of:
%
%
\begin{figure}[ht]
\centering{\includegraphics[width=0.8\textwidth]{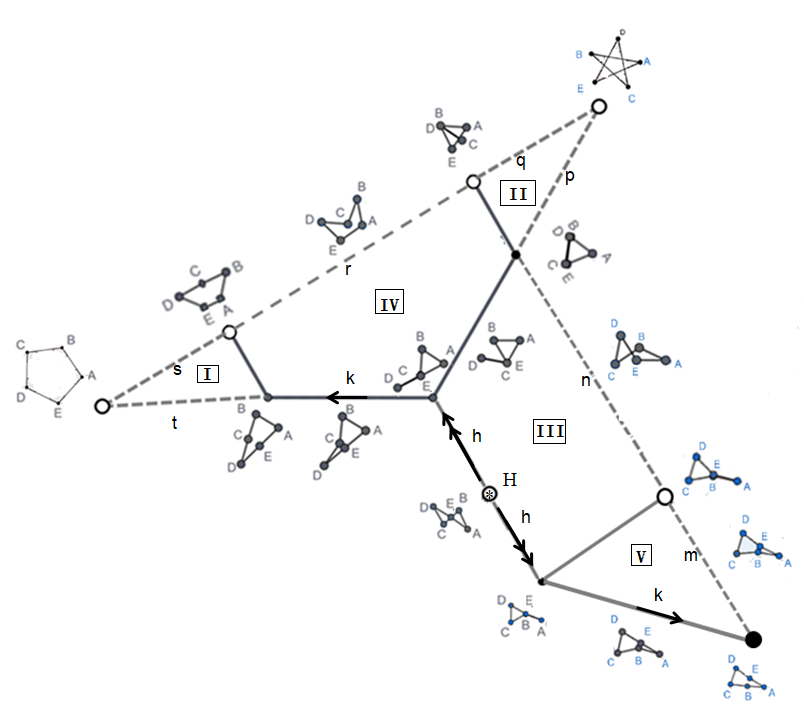}}
\caption{Fundamental domain for pentagon}
\label{ffund}
\end{figure}

\begin{enumerate}
\renewcommand{\labelenumi}{(\roman{enumi})}
\item One half of the hexagonal cell for \w{(--+--)} of Figure \ref{fmmpmm}, marked IV.
\item Attached to it along a half-edge we have one tenth of the pentagonal cell
  for \w{(-----)} of Figure \ref{fmmmmmsym}, marked I.
\item Along the opposite half-edge we have another tenth of the analogous pentagonal cell
  for \w[,]{(-----)'} marked II.
\item One full edge of the cell IV is glued to an edge of the
  half-pentagonal cell III for \w{(-++--)} of Figure \ref{fmmppm}.
\item Finally, the half-cell III for \w{(-++--)} is glued along a half-edge to one
  half V of the triangle for \w{(-++-+)} of Figure \ref{fmppmp}.
\end{enumerate}

The boundary of the fundamental domain $\Fc$ consists of two types of segments

\begin{enumerate}
\renewcommand{\labelenumi}{\alph{enumi}.}
\item The two copies of each of the solid edges $h$ and $k$ are identified pairwise
under appropriate symmetries. The point marked $H$ is fixed under the symmetries.
\item Each of the dashed lines $m$, $n$, $p$, $q$, $r$, $s$ and $t$ is an original axis
  of symmetry in Figures \ref{fmmpmm}, \ref{fmppmp}, \ref{fmmppm}, and \ref{fmmmmmsym},
  respectively, so they are also fixed under
  the corresponding symmetries, with points on the other side (in the full cell) reflected
  back into $\Fc$.
\end{enumerate}

Thus we may summarize the results of this section in:
\end{subsection}

\begin{proposition}\label{pscspent}
  The fully reduced symmetric \cspace\ \w{\wSGn{5}} of the equilateral pentagon
  in the plane is homeomorphic to a closed disc.
\end{proposition}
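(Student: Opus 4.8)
The plan is to realize $\wSGn{5}$ as the fundamental domain $\Fc$ of \S\ref{sscfp} equipped with the boundary identifications (a)--(b), and then to verify by hand that the result is a closed disc. I would first record the feature of odd polygons that makes this clean: since $n=5$, no closed equilateral pentagon can be collinear (five unit vectors cannot cancel along a single axis), so by \S\ref{scfrcs} the map $\delta\colon\hC(\Gen{5})\to\wCGn{5}$ has no branch points and is a free double cover, with $\tau$ (reflection in the $x$-axis) as its fixed-point-free deck transformation. This matches $\chi(\hC(\Gen{5}))=-6=2(-3)=2\chi(\wCGn{5})$, the genus-$4$ surface of \cite{HavD} being the orientation double cover of the connected sum of five projective planes of \cite{KamiTE}, in accordance with Remark \ref{rfullyred}.

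Next I would assemble the fundamental domain. The cell analysis of \S\ref{scpent}--\S\ref{ssymep} shows that $\Aut(\Gn{5})$ together with $\tau$ permutes the cells so that the symmetric space needs only a fraction of one representative of each type; \S\ref{sscfp} exhibits these as the five pieces (i)--(v), namely a half of the hexagonal cell IV, one tenth each of the pentagonal cells I and II, a half of the pentagonal cell III, and a half of the triangular cell V (Figures \ref{fmmpmm}, \ref{fmmmmmsym}, \ref{fmmppm}, \ref{fmppmp}). Because the pieces I, II and III are each attached to IV, and V is attached to III, along single (half-)edges, the nerve of this gluing is a tree; hence $\Fc$ is simply connected and is a topological closed disc. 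That $\Fc$ selects $(-----)$ rather than $(+++++)$, and $(-----)'$ rather than $(+++++)'$, records that $\tau$ has already been factored out, so the quotient it computes is the fully reduced $\wSGn{5}$ and not merely $\hC(\Gen{5})/\Aut(\Gn{5})$.

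It then remains to implement the residual identifications on $\partial\Fc$. The boundary circle is a cyclic concatenation of the four solid arcs (two copies each of $h$ and $k$) and the seven dashed arcs $m,n,p,q,r,s,t$ of item (b). By Case I (Proposition \ref{lreflsym}) each dashed arc is the fixed locus of a geometric reflection realizing an element of $\Aut(\Gn{5})$, so it embeds into the quotient and becomes part of $\partial\wSGn{5}$, with no further collapsing. The two copies of $h$, and likewise of $k$, are glued by the symmetries matching the adjacent cell types, and item (a) records that this gluing fixes the point $H$. Since the identification of the two $h$-arcs fixes an interior point $H$ of their union, it is necessarily a fold (an $aa^{-1}$-type identification) rather than an antipodal $aa$-gluing; the same holds for the two $k$-arcs. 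Folding a boundary-arc pair of a disc yields a disc again, so after both folds $\Fc$ is still a disc, now with the solid arcs pushed into the interior and the dashed arcs reassembled as the boundary.

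The main obstacle is exactly this last verification: one must pin down the orientations with which the two copies of $h$ (and of $k$) are glued and confirm that each is the orientation-reversing fold forced by the fixed point $H$ --- so that no cross-cap or handle is introduced --- and check that after the two folds the seven dashed arcs close up into a single boundary circle rather than several. As an independent consistency check I would compute the orbifold Euler characteristic: with $|\Aut(\Gn{5})|=10$ and $\chi(\wCGn{5})=-3$ the quotient has $\chi_{\mathrm{orb}}=-3/10$, consistent with a disc carrying the cone points coming from the rotational symmetries together with the mirror boundary and corner reflectors coming from the reflections, as dictated by the subgroup lattice of $\Aut(\Gn{5})$. Matching these data against a disc with one boundary circle confirms that the underlying space has $\chi=1$, completing the identification of $\wSGn{5}$ with a closed disc.
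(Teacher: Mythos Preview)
Your approach is essentially the paper's own: realize $\wSGn{5}$ as the fundamental domain $\Fc$ of \S\ref{sscfp} with the $h$- and $k$-identifications, and observe that the result is a closed disc. The paper's proof is very terse --- it simply asserts that the quotient $\widehat{X}$ obtained from the disc $X=\Fc$ by identifying the two copies of $h$ and of $k$ is itself a closed disc, and then invokes compactness to conclude that the induced continuous bijection $\widehat{X}\to\wSGn{5}$ is a homeomorphism. Everything else is read off Figure~\ref{ffund}.

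Your write-up supplies justification the paper leaves implicit: the tree-nerve argument that $\Fc$ is a disc, the observation (via Proposition~\ref{lreflsym}) that the dashed arcs are reflection loci and hence survive as boundary, and the fold analysis for $h$ and $k$. The branch-point-free double cover remark and the orbifold Euler-characteristic check are genuine additions not in the paper; they are sound sanity checks but not part of the paper's argument. One small caution: your sentence ``since the identification of the two $h$-arcs fixes an interior point $H$ of their union, it is necessarily a fold'' is not by itself a proof --- a fixed point alone does not distinguish $aa^{-1}$ from $aa$; what matters is the position of the two $h$-arcs on $\partial\Fc$ and the orientation of the matching, which you correctly flag as ``the main obstacle'' still to be read off the figure. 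With that caveat, your outline is correct and strictly more detailed than the paper's.
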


\begin{proof}
The fundamental domain $X$ in Figure \ref{ffund} is a subspace of the fully reduced
\cspace\ \w[,]{\wCGn{5}} with \w{i:X\to\wCGn{5}} the inclusion.
If \w{p:\wCGn{5}\to\wSGn{5}} is the quotient map, we see that
\w{p\circ i:X\to\wSGn{5}} is surjective, and one-to-one except along the intervals
marked $h$ and $k$ in Figure \ref{ffund}. Thus if \w{r:X\to\widehat{X}} is the quotient
map identifying the two copies of $h$ and $k$ respectively, we see that \w{p\circ i} induces
a homoeomorphism \w{\varphi:\widehat{X}\to\wSGn{5}} (since the closed
disc \w{\widehat{X}} is compact).
\end{proof}

\appendix
%
%
\section{Configuration spaces for quadrilaterals}
\label{cquad}

As noted above, the usual \cspace s of planar quadrilaterals are well known (see, e.g.,
\cite[\S 1]{FarbT}). However, we need their detailed description in order to analyze
the symmetric \cspace s. Thus, in this Appendix we prove Theorem \ref{tscsq}
by considering separately the six cases of \S \ref{socsq}\vsm:

\noindent\textbf{I.}\ The isosceles quadrilateral case:

When \w{\Gamma=ABCD}  is a quadrilateral in the plane with
opposite edges \w{AB} and \w{CD} of equal length, we may
parameterize the configurations $\bx$ of $\Gamma$ as in \S
\ref{spcsp} by a subset of the angles at the four vertices, by
choosing \w{\angle BAD} (from \w{\vec{AD}} to \w[),]{\vec{AB}} and
\w{\angle CDA} (from \w{\vec{DC}} to \w[,]{\vec{DA}} both measured
counter clockwise). We think of \w{\phi:=\angle BAD} as the basic
continuous parameter, and note that to each value of $\phi$ we
associate two values of \w[,]{\angle CDA} corresponding to the
elbow up/down position of $C$ \w[(]{\vare=\pm1} in \S \ref{spcsp})
\wh see Figure \ref{ffourbar}. Note that the precise rule for
calculating \w{\phi'} and \w{\phi''} from $\phi$ is complicated to
state.

%
%
\begin{figure}[ht]
\centering{\includegraphics[width=0.8\textwidth]{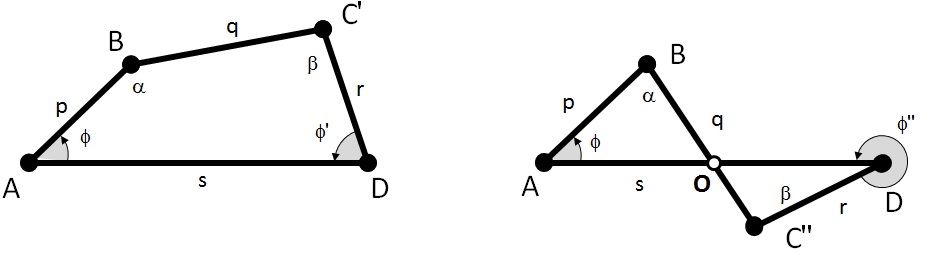}}
\caption{Parameterizing a quadrilateral} \label{ffourbar}
\end{figure}

We need to understand the
action of the \wwd{C\sb{2}}symmetry of \w{\TG} (generated by the graph automorphism
\w{f:\TG\to\TG} given by \w{A\leftrightarrow D} and \w{B\leftrightarrow C} on a
configuration \w[).]{\bx\in\hCG} In the language of \S \ref{ssymnor}, our permutation $\sigma$ is
given by \w[,]{\binom{1234}{4321}} which reverses cyclic orientation, so
\w{\bx=(\phi,\alpha,\beta,\phi')} maps under $f$ to
\w{\by=N(\hx\circ f)=(2\pi-\phi',2\pi-\beta,2\pi-\alpha,2\pi-\phi)}
(where $\alpha$ and $\beta$ are extraneous to determining the configuration,
and may therefore be dropped). Thus the action of \w{C\sb{2}=\AG} on \w{\hCG} takes
\w[,]{(\phi,\phi')} to \w{(2\pi-\phi',2\pi-\phi)} as in Figure \ref{ffourbardual},
with fixed points when \w[.]{\phi+\phi'=2\pi}

%
%
\begin{figure}[ht]
\centering{\includegraphics[width=0.8\textwidth]{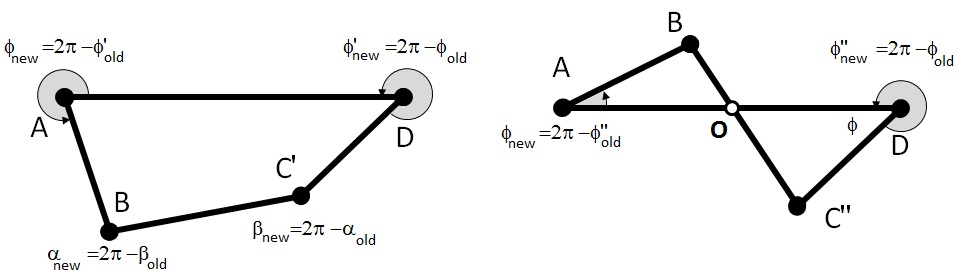}}
\caption{Action of \ww{C\sb{2}} on Figure \protect{\ref{ffourbar}}}
\label{ffourbardual}
\end{figure}

Case \ref{socsq}(i) when $\Gamma$ is isosceles then may be
described as follows:

\begin{lemma}\label{lisoctrap}
  In the notation of \S \ref{nisoctrap}, assume that \w[,]{s>p,q,r} \w[,]{p=r} \w[,]{s<p+q+r} and
  \w[.]{s+q>2p} Then \w{\hCG} is a circle, the \wwd{C\sb{2}}action is the reflection in the diameter
  (with two fixed points), and thus \w{\hSG} is homeomorphic to a closed interval.
\end{lemma}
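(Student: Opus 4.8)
The plan is to realize $\hCG$ explicitly as a circle built from the two elbow branches over an interval of values of $\phi=\angle BAD$, and then to identify the $C\sb{2}$-action with a reflection by counting its fixed configurations. First I would place $A$ and $D$ at the ends of the longest edge $s$ and record the diagonal length by the law of cosines, $BD\sp{2}=p\sp{2}+s\sp{2}-2ps\cos\phi$; on $[0,\pi]$ this increases monotonically from $(s-p)\sp{2}$ to $(s+p)\sp{2}$ and is symmetric under $\phi\mapsto 2\pi-\phi$. A configuration exists exactly when the triangle $BCD$ closes up, i.e.\ $|q-r|\le BD\le q+r$, which for $r=p$ reads $|q-p|\le BD\le q+p$. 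The key computation is to feed the four hypotheses into these two inequalities. The lower bound $BD\ge|q-p|$ holds everywhere, since at its minimum $BD=s-p$ (at $\phi=0$) one has $s-p>|q-p|$: this is where $s>q$ is used when $q\ge p$, and where $s+q>2p$ is used when $q<p$. The upper bound $BD\le q+p$ holds at $\phi=0$ (because $s<p+q+r=2p+q$) but fails at $\phi=\pi$ (because $s>q$), so by the intermediate value theorem there is a unique $\phi\sp{\ast}\in(0,\pi)$ with $BD(\phi\sp{\ast})=q+p$. Hence the admissible set of $\phi$ is the arc $[-\phi\sp{\ast},\phi\sp{\ast}]\pmod{2\pi}$; over its interior the two elbow positions $\vare=\pm1$ give distinct configurations, which merge to a single point at $\phi=\pm\phi\sp{\ast}$, where $B,C,D$ become collinear. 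Gluing the two $\vare$-branches along these common endpoints yields a single circle, so $\hCG\cong S\sp{1}$.

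Next I would analyze the involution. By the computation preceding the lemma, the generator of $C\sb{2}=\AG$ acts by $(\phi,\phi')\mapsto(2\pi-\phi',2\pi-\phi)$, i.e.\ reflection of the ambient $(\phi,\phi')$-torus across the anti-diagonal $\phi+\phi'=2\pi$, so its fixed points on $\hCG$ are the configurations symmetric under the perpendicular bisector of $AD$. Writing $A=(-s/2,0)$, $D=(s/2,0)$, $B=(b\sb{x},b\sb{y})$ and $C=(-b\sb{x},b\sb{y})$, the constraints $|BC|=q$ and $|AB|=p$ force $b\sb{x}=\pm q/2$ and $b\sb{y}\sp{2}=p\sp{2}-((s\pm q)/2)\sp{2}$. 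Here $s+q>2p$ rules out $b\sb{x}=+q/2$ (it makes $b\sb{y}\sp{2}<0$), while $s<2p+q$ makes $b\sb{y}\sp{2}>0$ for $b\sb{x}=-q/2$; this gives exactly two symmetric configurations, the isosceles trapezoid opening up and its mirror image opening down, which are distinct in $\hCG$ since we quotient only by orientation-preserving isometries.

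Finally, any involution of $S\sp{1}$ with a nonempty fixed set is orientation-reversing, hence a reflection with exactly two fixed points; having exhibited precisely two, the $C\sb{2}$-action is the reflection in a diameter, and $\hSG=\hCG/C\sb{2}$ is the quotient of a circle by a reflection, namely a closed interval with endpoints the images of the two trapezoids. I expect the main obstacle to be the bookkeeping of the middle step: one must check carefully that among the four triangle-inequality constraints it is precisely the single upper bound $BD\le q+r$ that is binding---so that $\hCG$ is one circle rather than two arcs, two circles, or empty---and, dually, that exactly one of the two candidate symmetric trapezoids survives as a pair of configurations, with each of the four hypotheses entering exactly one of these sign checks.
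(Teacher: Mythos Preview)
Your argument that $\hCG\cong S\sp{1}$ is correct and is a clean variant of the paper's approach: you use the diagonal length $BD$ as a monotone proxy for $\phi$, whereas the paper phrases the same dichotomy via the arc $\zeta\subset\gamma\sb{B}$ cut out by an annulus about $D$. Both arguments use the four hypotheses in exactly the way you describe.

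The fixed-point analysis, however, contains a genuine error. You correctly quote the action as $(\phi,\phi')\mapsto(2\pi-\phi',\,2\pi-\phi)$, but your inference that fixed configurations are those symmetric under the perpendicular bisector of $AD$ is wrong. In $\hCG$ we quotient only by \emph{orientation-preserving} isometries, so after relabelling $A\leftrightarrow D$, $B\leftrightarrow C$ the renormalization that puts $A,D$ back in standard position is the rotation by $\pi$ about the midpoint of $AD$, not a reflection. Consequently the fixed configurations are those with $\pi$-rotational symmetry about that midpoint, i.e.\ (in your centered coordinates) $C=(-b\sb{x},-b\sb{y})$, not $C=(-b\sb{x},b\sb{y})$. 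These are the crossed, non-convex quadrilaterals singled out in the paper's proof. Your isosceles trapezoids satisfy $\phi=\phi'$, not $\phi+\phi'=2\pi$: the ``opening up'' and ``opening down'' trapezoids are \emph{interchanged} by the $C\sb{2}$-action and form a free orbit, not a pair of fixed points.

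By numerical coincidence your (incorrect) family also yields exactly two configurations under the stated inequalities, so your final count and conclusion survive. To repair the argument, take $C=(-b\sb{x},-b\sb{y})$; then $b\sb{x}\sp{2}+b\sb{y}\sp{2}=(q/2)\sp{2}$ and $(b\sb{x}+s/2)\sp{2}+b\sb{y}\sp{2}=p\sp{2}$ determine $b\sb{x}$ uniquely, with $b\sb{y}\sp{2}>0$ precisely because $s+q>2p$ and $s<2p+q$, giving exactly two fixed points as required.
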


\begin{proof}
Consider the circle \w{\gamma\sb{B}} of radius \w{p=r}
about a fixed point $A$ in the plane, and another circle \w{\gamma\sb{C}} of the same radius
about a fixed point $D$ at distance \w{\ell\sb{1}} from $A$.  These are the loci of allowable
locations for $B$ and $C$, respectively, if we disregard the requirement that the distance
between them is $q$. By the analysis of \cite{MTrinG} (see also \cite[\S 1]{FarbT}),
the reduced \cspace\ \w{\hCG} can be described as follows in our case:

There is an arc $\zeta$ of the circle \w{\gamma\sb{B}} defined as the intersection
of \w{\gamma\sb{B}} with an annulus $T$ about $D$,
where $T$ consists of the allowable locations for $B$ with respect to
\w[.]{\gamma\sb{C}} Thus the points of $\zeta$ are precisely the possible locations for $B$
satisfying all our constraints.

For each point $B$ of $\zeta$, the circle $\delta$ of radius $q$ about $B$ generically
intersects \w{\gamma\sb{C}} in two points \w{C'}
and \w[,]{C''} corresponding to the elbow up and elbow down positions of
\w{\angle BCD} (except for the two endpoints \w{B\sb{+}} and \w{B\sb{-}} of $\zeta$, for
which \w{C'=C''} and \w{\beta=\angle BCD} is flat).
The angle \w{\angle BAD} is our parameter $\phi$, with \w{\phi'=\angle C'DA}
and \w[.]{\phi''=\angle C''DA}

By the previous discussion, the fixed points of the action of \w{\AG=C\sb{2}} on \w{\hCG} occur
when \w{\phi+\phi'} or \w{\phi+\phi''} equals \w[.]{2\pi} In the case illustrated in
Figure \ref{fisotrapi}, this happens for \w{C'} with its angle \w[,]{\phi'} and the resulting
quadrilateral \w{ABC'D} is not convex (on the right in Figure \ref{ffourbar}).
Here \w{C'} is taken to be the upper intersection point of $\delta$ with
\w[,]{\gamma\sb{C}} and \w{C''} is the lower point.  Note that the two circles \w{\gamma\sb{B}}
and \w{\gamma\sb{C}} may in fact intersect under our hypotheses as in Figure \ref{fisotrapii}
(if \w[),]{q<2p} but this does not affect the argument.

%
%
\begin{figure}[ht]
\centering{\includegraphics[width=0.6\textwidth]{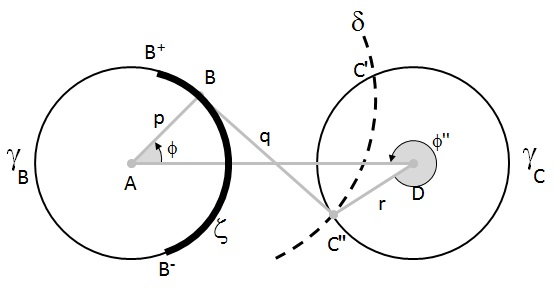}}
\caption{Configuration space for isosceles quadrilateral in case
(i)} \label{fisotrapi}
\end{figure}

The constraints on $p$, $q$, $r$, and $s$ ensure that there is a unique
parallelogram \w{ABDC''}
with opposite sides \w{p=r} and diagonals $q$ and $s$.
The angles \w{\phi=\angle BAD} and \w{2\pi-\phi''=\angle ADC''} are then equal,
and the non-convex quadrilateral \w{ABC''D} is then an allowable configuration
for $\Gamma$.
Since the same argument works replacing \w{C'} by \w[,]{C''} we have exactly two
fixed points for the action of \w{C\sb{2}} on \w[.]{\hCG}

The single configuration associated to the end \w{B\sp{+}} of $\zeta$ is the triangle \w{AB\sp{+}D}
with \w{B\sp{+}CD} aligned (of length \w[).]{q+r} From Figure \ref{ffourbardual} we see that
the \wwd{C\sb{2}}action takes it to the triangle \w{AC\sp{-}D} with \w{AB\sb{\new}C\sp{-}} aligned,
where \w{C\sp{-}} is the lower edge of the arc on \w{\gamma\sb{C}} corresponding to $\zeta$
(not shown in Figure \ref{fisotrapi}), and the point \w{B\sb{\new}} is on the lower half of $\zeta$.
As $B$ moves down from \w{B\sp{+}} along $\zeta$, the point \w{C''} (the lower of the two
intersections of $\delta$ with \w[)]{\gamma\sb{C}} moves down, until it reaches the $x$ axis
\w{AD} (creating the point $O$ on the right in Figure \ref{ffourbar}). We see that at this instance
\w{B\sb{\new}} is on the $x$ axis, and thereafter \w{B\sb{\new}} will be in the upper
half of $\zeta$.

Since \w{\hCG} is a simple
  closed curve in the torus \w[,]{S\sp{1}\times S\sp{1}} and the \wwd{C\sb{2}}action
  is topologically equivalent to the reflection of the circle in a transverse line, we deduce that
  \w{\hSG} is topologically a closed interval.
  \end{proof}

\noindent\textbf{II.}\ Case \ref{socsq}(ii) is essentially a special case of the above:

\begin{lemma}\label{lisoctrapdeg}
  In the notation of \S \ref{nisoctrap}, assume that \w[,]{s>p,q,r} \w[,]{p=r} \w[,]{s<p+q+r} and
  \w[.]{s+q=2p} Then \w{\hCG} is a wedge of two circles, the \wwd{C\sb{2}}action switches the
  two circles between them (fixing the common point), and thus \w{\hSG} is homeomorphic
  to a circle.
\end{lemma}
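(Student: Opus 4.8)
The plan is to run the argument of Lemma \ref{lisoctrap} verbatim and track what degenerates in the boundary case $s+q=2p$, where the single circle of case \ref{socsq}(i) pinches into a figure-eight. First I would recall from \S \ref{socsq}(ii) (and \cite{FarbT}) that under the present hypotheses the reduced \cspace\ $\hCG$ is a wedge of two circles $L_1\vee L_2$, and identify the wedge point $w$ geometrically: it is the unique \emph{collinear} configuration, in which the four vertices lie on a line with $\vec{AB}=+p$, $\vec{BC}=-q$, $\vec{CD}=+p$, $\vec{DA}=-s$, so that the closure condition $2p=q+s$ is exactly the hypothesis $s+q=2p$. This flat configuration is the node at which the elbow-up and elbow-down branches of the four-bar meet.

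Next I would analyze the fixed points of the $C_2$-action, which by the computation in the isosceles case above (see Figure \ref{ffourbardual}) is $(\phi,\phi')\mapsto(2\pi-\phi',2\pi-\phi)$, so a configuration is fixed iff $\phi+\phi'=2\pi$. Placing $A=(-s/2,0)$ and $D=(s/2,0)$, the condition $\phi'=2\pi-\phi$ forces $C=-B$ (central symmetry about the midpoint of $AD$); together with $|B-A|=p$ and $|B-C|=2|B|=q$ this says $B$ lies on the circle of radius $q/2$ about the origin and on the circle of radius $p$ about $A$. These circles have centers a distance $s/2=p-q/2$ apart, so under $s+q=2p$ they are internally tangent and meet in the single point $B=(q/2,0)$, $C=(-q/2,0)$ — precisely the collinear configuration $w$. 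Thus, in contrast to case (i), where the strict inequality $|p-q/2|<s/2<p+q/2$ gives two transverse intersections (the two fixed points of Lemma \ref{lisoctrap}), here the two fixed points have coalesced into the single node $w$, which is therefore the unique fixed point of the action on $\hCG$.

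Finally I would deduce that $C_2$ interchanges the two loops. Since $f$ is a nontrivial involution of $L_1\vee L_2$ fixing $w$, and a nontrivial involution of a circle has either $0$ or $2$ fixed points, it cannot preserve either loop: preserving $L_i$ would force $f|_{L_i}$ to have a second fixed point in $L_i\setminus\{w\}$ or to be the identity on $L_i$, either way contradicting the uniqueness of $w$. Hence $f$ swaps $L_1\leftrightarrow L_2$ homeomorphically while fixing $w$, so $L_1$ is a set of orbit representatives and the continuous bijection $L_1\to\hCG/C_2$ (from compact to Hausdorff) is a homeomorphism; thus $\hSG=\hCG/C_2\cong L_1\cong S^1$. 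I expect the main obstacle to be the second step — verifying cleanly that the \emph{two} fixed points of the non-degenerate case merge into the node (rather than one of them surviving off the node) — for which the tangency computation above, or equivalently the observation that the only $f$-symmetric configuration allowed when $s+q=2p$ is the flat one, is the decisive input.
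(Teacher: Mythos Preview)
Your proof is correct and follows the same overall strategy as the paper's: identify the collinear configuration as both the wedge point of $S^1\vee S^1$ and the unique $C_2$-fixed point, then deduce that the two circles are exchanged. The execution differs in two places worth noting. For the fixed-point count, the paper tracks the two non-convex ``parallelogram'' fixed points of Lemma~\ref{lisoctrap} and observes (via $AO+BO=s/2+q/2=p$ in Figure~\ref{ffourbar}) that under $s+q=2p$ they both collapse onto the aligned configuration $AB_0C_0D$; your tangent-circle computation ($|B|=q/2$, $|B-A|=p$, centers at distance $s/2=p-q/2$) reaches the same conclusion more directly without reference to the non-degenerate case. For the swap, the paper invokes the explicit sub-arc description $B^+B_0$, $B^-B_0$ of $\zeta$ from the proof of Lemma~\ref{lisoctrap}, whereas you argue abstractly that a homeomorphic involution of $S^1\vee S^1$ with a single fixed point cannot preserve either loop (else $f\rest{L_i}$ would be an involution of $S^1$ with exactly one fixed point). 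Both routes are short; yours is a bit more self-contained, the paper's stays closer to the Milgram--Trinkle picture already set up.
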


%
%
\begin{figure}[ht]
\centering{\includegraphics[width=0.4\textwidth]{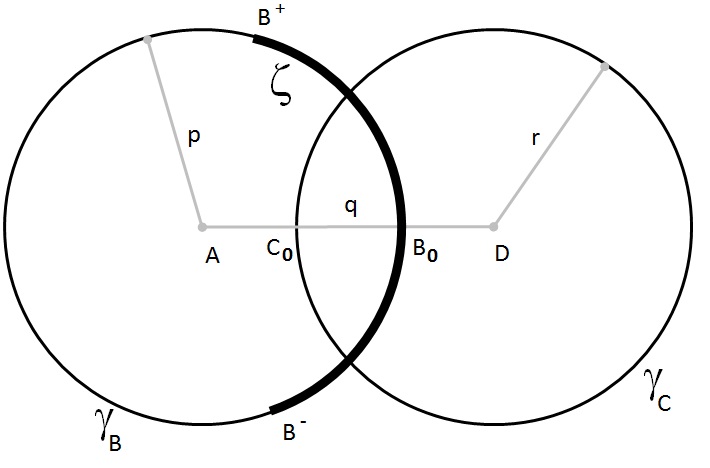}}
\caption{Configuration space for isosceles quadrilateral in case
(ii)} \label{fisotrapii}
\end{figure}

\begin{proof}
  In Figure \ref{fisotrapii} the circle $\delta$ of radius $q$ about the midpoint \w{B\sb{0}} of the
  arc $\zeta$ now intersects \w{\gamma\sb{C}} in a single point \w{C\sb{0}} (which also
  holds for its endpoints \w{B\sp{+}} and \w[,]{B\sp{-}} as before).
  The fully aligned degenerate trapezoid \w{AB\sb{0}C\sb{0}D} is fixed by the
  \wwd{C\sb{2}}action. Moreover, we see that the non-convex parallelograms corresponding to
  the fixed points described in the proof of Lemma \ref{lisoctrap} are both identified with this
  degenerate trapezoid, since we have \w{AO=\frac{s}{2}} and \w{BO=\frac{q}{2}}
  on the right in Figure \ref{ffourbar}, and their sum equals \w{AB=p} by our assumption.

  The argument in the proof of Lemma \ref{lisoctrap} shows that the two circles
  corresponding to the sub-arcs \w{B\sp{+}B\sb{0}} and \w{B\sp{-}B\sb{0}} of $\zeta$ are
  exchanged under the \wwd{C\sb{2}}action.
\end{proof}

\noindent\textbf{III.} \ Case \ref{socsq}(iii) becomes:

\begin{lemma}\label{lisoctrapdg}
In the notation of \S \ref{nisoctrap}, assume that \w[,]{s>p,q,r} \w[,]{p=r} \w[,]{s<p+q+r} and
\w[.]{s+q<2p} Then \w{\hCG} is a disjoint union of two circles, the \wwd{C\sb{2}}action switches the
  two circles between them, and thus \w{\hSG} is homeomorphic
  to a circle.
\end{lemma}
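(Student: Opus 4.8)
The plan is to follow the geometric framework set up in the proof of Lemma~\ref{lisoctrap}, tracking exactly what the extra hypothesis $s+q<2p$ changes. As there, I would fix $A$ at the origin and $D=(s,0)$, and consider the circle $\gamma\sb{B}$ of radius $p=r$ about $A$ and the circle $\gamma\sb{C}$ of radius $p$ about $D$. The admissible locations for $B$ form the arc $\zeta=\gamma\sb{B}\cap T$, where $T=\{X:\,|q-p|\le d(X,D)\le q+p\}$ is the annulus of points $B$ admitting a valid $C\in\gamma\sb{C}$ at distance $q$.

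First I would record that $s>p$ together with $s+q<2p$ forces $q<p$ (since $p+q<s+q<2p$), so the inner radius of $T$ is $|q-p|=p-q$. Along $\gamma\sb{B}$ the distance $d(B,D)$ is a function of $\phi=\angle BAD$ increasing monotonically from $s-p$ at $\phi=0$ to $s+p$ at $\phi=\pi$, and symmetric under $\phi\mapsto 2\pi-\phi$. The hypotheses give $s-p<p-q$ and $s+p>q+p$, so on each of the half-circles $\phi\in(0,\pi)$ and $\phi\in(\pi,2\pi)$ the value $d(B,D)$ enters and then leaves the admissible band $[p-q,\,q+p]$ exactly once. Hence $\zeta$ is a disjoint union of two arcs $\zeta\sb{1},\zeta\sb{2}$ (the ``upper'' and ``lower'' halves, interchanged by reflection in $AD$). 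Over the interior of each arc there are two positions $C',C''$ of $C$ (elbow-up/elbow-down) coalescing at the two endpoints, so each arc carries a circle and $\hCG\cong S\sp{1}\amalg S\sp{1}$, in agreement with case~\ref{socsq}(iii).

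Next I would analyze the fixed points of the $C\sb{2}$-action. Using the normalized relabelling $A\leftrightarrow D$, $B\leftrightarrow C$, which (as in Figure~\ref{ffourbardual}) acts by $(\phi,\phi')\mapsto(2\pi-\phi',2\pi-\phi)$, a configuration is fixed exactly when $C$ is the image of $B$ under the half-turn $(x,y)\mapsto(s-x,-y)$ about $(s/2,0)$. Writing $B=(B\sb{x},B\sb{y})$ with $B\sb{x}\sp{2}+B\sb{y}\sp{2}=p\sp{2}$ and imposing $d(B,C)=q$ yields the single value $B\sb{x}=\tfrac{s\sp{2}+4p\sp{2}-q\sp{2}}{4s}$, and the factorization $B\sb{x}-p=\tfrac{(s-2p-q)(s+q-2p)}{4s}$. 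Here $s-2p-q<0$ always (since $s<p+q+r=2p+q$), while $s+q-2p<0$ by hypothesis, so $B\sb{x}>p$ and $B\sb{y}$ is not real: \emph{the action has no fixed points}. This is precisely the transition already visible in Lemmas~\ref{lisoctrap} and~\ref{lisoctrapdeg}, where the factor $s+q-2p$ is positive, respectively zero.

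Finally, since the $C\sb{2}$-action is now a free involution of $S\sp{1}\amalg S\sp{1}$, it either preserves each circle (acting antipodally) or interchanges them, and only the latter yields a connected quotient. I would rule out the first possibility by noting that the half-turn carries $\gamma\sb{C}$ onto $\gamma\sb{B}$ while negating the $y$-coordinate, so a convex configuration on $\zeta\sb{1}$ (with $C$ in the upper half-plane) is sent to one whose $B$ lies in the lower half-plane, i.e. over $\zeta\sb{2}$; by connectedness this moves the whole first component onto the second. A conceptual check is the deformation $s+q\uparrow 2p$, where the two circles merge at the unique wedge point of Lemma~\ref{lisoctrapdeg} across which they are swapped, the swap persisting as the hypothesis opens up. Hence the involution interchanges the two circles and $\hSG=\hCG/C\sb{2}\cong S\sp{1}$. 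I expect the last step to be the main obstacle: confirming that the free involution swaps the components rather than acting antipodally on each, which the half-turn computation (the appearance of $y\mapsto-y$) settles most cleanly.
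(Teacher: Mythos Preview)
Your proof is correct and follows essentially the same route as the paper: the same geometric setup with $\gamma\sb{B}$, $\gamma\sb{C}$ and the arc $\zeta$, the observation that $\zeta$ breaks into two arcs when $s+q<2p$, the absence of fixed points, and the swap of components. The paper's own proof is far more terse, merely citing Lemmas~\ref{lisoctrap} and~\ref{lisoctrapdeg} for these facts, whereas you supply the explicit algebra (the factorization $B\sb{x}-p=\tfrac{(s-2p-q)(s+q-2p)}{4s}$) for the non-existence of fixed points and a direct half-turn argument for the component swap; both fill in what the paper leaves implicit.
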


%
%
\begin{figure}[ht]
\centering{\includegraphics[width=0.4\textwidth]{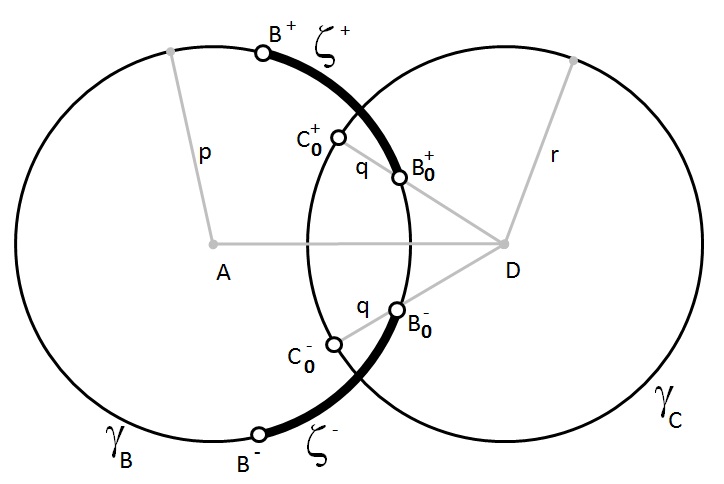}}
\caption{Configuration space for isosceles quadrilateral in case
(iii)} \label{fisotrapiii}
\end{figure}

\begin{proof}
  In this case the $\zeta$ of Figures \ref{fisotrapi}-\ref{fisotrapii} splits into two disjoint arcs
  \w{\zeta\sp{+}=B\sp{+}B\sp{+}\sb{0}} and \w[,]{\zeta\sp{-}=B\sp{-}B\sp{-}\sb{0}}
as in Figure \ref{fisotrapiii}, and the proof of Lemma \ref{lisoctrapdeg} shows that
  the non-convex parallelogram corresponding to a possible fixed point of the
  \wwd{C\sb{2}}action cannot exist. This action simply switches the two disjoint circles of \w{\hCG}
  between them, as before.
\end{proof}

\noindent\textbf{IV.} \ Case \ref{socsq}(iv) is similar:

\begin{lemma}\label{lisoctrapd}
If \w{s=q\geq p>r} in the notation of \S \ref{nisoctrap}, \w{\hCG} is a disjoint union of two circles,
  the \wwd{C\sb{2}}action switches them, and \w{\hSG} is again a circle.
\end{lemma}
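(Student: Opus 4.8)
The plan is to mirror the analysis of Lemma~\ref{lisoctrap}, keeping careful track of orientations. First I would record the relevant automorphism: by \S\ref{squad}(4), $\AG=C_2$ is generated by the reflection $\rho$ exchanging the two equal opposite links $s=DA$ and $q=BC$, which as a permutation of the vertices is $A\leftrightarrow B$, $C\leftrightarrow D$, and which reverses the cyclic orientation. Next I would re-describe the two components of $\hCG\cong S^1\amalg S^1$ (from \S\ref{socsq}(iv)) geometrically: fixing $A=(0,0)$ and $D=(s,0)$, the vertex $B$ runs over the circle of radius $p$ about $A$, and a valid position for $C$ (at distance $r$ from $D$ and $q=s$ from $B$) exists exactly when $BD\in[\,q-r,q+r\,]=[\,s-r,s+r\,]$. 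As $B$ traverses its circle, $BD$ sweeps the interval $[\,s-p,s+p\,]$, and since $p>r$ this strictly contains $[\,s-r,s+r\,]$; hence the admissible positions for $B$ form two disjoint arcs, one in each open half-plane determined by the line $AD$. Doubling each arc by the elbow-up/elbow-down choice for $C$ (the two branches merging at the arc endpoints, where $B$, $C$, $D$ are collinear) yields the two circles $\gamma^{+}$ and $\gamma^{-}$, distinguished by the sign of the $y$-coordinate of $B$.

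The heart of the argument is to show that $\rho$ interchanges $\gamma^{+}$ and $\gamma^{-}$. Since $\rho$ is a homeomorphism permuting the two components, by connectedness it is enough to follow a single configuration. I would use the isosceles trapezoid $\bx_{T}$ with parallel sides $AB=p$ and $CD=r$ and legs $DA=BC=s$, which exists precisely because $s$ is the longest edge. This configuration admits the geometric reflection $R$ realizing $\rho$, that is $R\circ\bx_{T}=\bx_{T}\circ\rho$, so $\rho[\bx_{T}]=[R\,\bx_{T}]$ is the mirror image of $\bx_{T}$. Crucially, $\hCG$ is the quotient only by \emph{orientation-preserving} isometries, so reflecting changes the component: after re-normalizing the base $DA$ onto the positive $x$-axis, a short computation comparing the oriented angle $\angle BAD$ (a cross product of $\overrightarrow{AD}$ with $\overrightarrow{AB}$) shows that $B$ moves from one open half-plane to the other. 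Hence $\rho[\bx_{T}]\in\gamma^{-}$ while $[\bx_{T}]\in\gamma^{+}$, and therefore $\rho(\gamma^{+})=\gamma^{-}$.

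Finally, since $\rho$ freely interchanges the two circles, the quotient is $\hSG=\hCG/C_2=(\gamma^{+}\amalg\gamma^{-})/C_2\cong\gamma^{+}\cong S^1$, as claimed. The main obstacle is the middle step, where one must resist the analogy with case~\ref{socsq}(i): there $\hCG$ was a single circle and the reflection had two fixed points, whereas here the reflection is fixed-point free because it swaps the components. The decisive point is that the symmetric trapezoid $\bx_{T}$, which in the \emph{fully} reduced space would be a fixed configuration, is \emph{not} fixed in $\hCG$, since the reduced space still separates a configuration from its mirror image; verifying that its image genuinely lands in the opposite component—rather than returning to the same circle via an orientation-preserving isometry—is precisely where the orientation bookkeeping must be carried out with care.
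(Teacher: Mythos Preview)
Your argument is correct, and it follows a genuinely different route from the paper's. The paper does not keep $A$ and $D$ on the $x$-axis; instead it re-normalizes by fixing $A$ and $B$ and drawing the two congruent circles $\gamma\sb{D}$ and $\gamma\sb{C}$ (both of radius $s=q$) about them. In that frame the picture is literally that of case~(iii) (Lemma~\ref{lisoctrapdg}) with the roles of the vertices permuted, so the arc $\zeta$ on $\gamma\sb{D}$ splits into two disjoint pieces $\zeta\sp{+}$ and $\zeta\sp{-}$, the non-convex parallelogram that would give a fixed point of the $C\sb{2}$-action cannot exist, and the earlier analysis already shows the two circles are interchanged. Your approach instead stays in the $AD$-frame, identifies the two components by the sign of the $y$-coordinate of $B$ (which is legitimate since the admissible arc for $B$ lies strictly in each open half-plane, as $p>r$), and establishes the swap by tracking the single isosceles trapezoid and using that the geometric symmetry realizing $\rho$ is orientation-reversing. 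The paper's reduction is more economical because it simply reuses the work of Lemmas~\ref{lisoctrap}--\ref{lisoctrapdg}; your argument is more self-contained and has the virtue of making explicit \emph{why} the obvious symmetric configuration fails to be a fixed point in $\hCG$: it is fixed only in the fully reduced space $\wCG$, whereas in $\hCG$ a reflection still separates a configuration from its mirror image. One small point: your cross-product computation (that $\overrightarrow{AD}\times\overrightarrow{AB}$ changes sign under $\rho$) is indeed short, but it is the crux of the argument and would be worth writing out in a final version.
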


%
%
\begin{figure}[ht]
\centering{\includegraphics[width=0.3\textwidth]{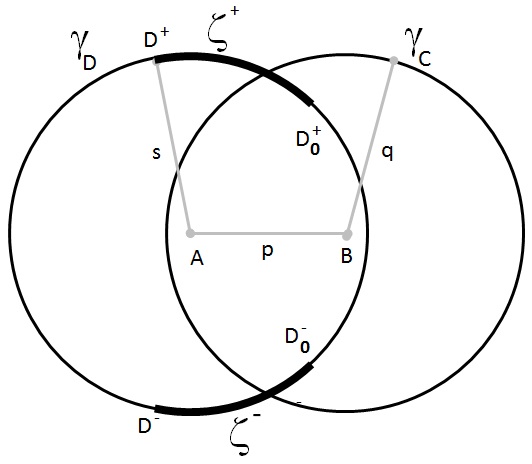}}
\caption{Configuration space for isosceles quadrilateral in case
(iv)} \label{fisotrapiv}
\end{figure}

\begin{proof}
  In Figure \ref{fisotrapiv} we choose to draw the two congruent circles \w{\gamma\sb{C}}
  and \w{\gamma\sb{D}} about $A$ and $B$, so that $\zeta$ again splits into two disjoint arcs
  \w{\zeta\sp{+}=D\sp{+}D\sp{+}\sb{0}} and \w[,]{\zeta\sp{-}=D\sp{-}D\sp{-}\sb{0}}
  as in Figure \ref{fisotrapiii}, and once more a non-convex parallelogram corresponding
  to a possible fixed point of the \wwd{C\sb{2}}action cannot exist. This action again switches the
  two disjoint circles of \w{\hCG} between them.
\end{proof}

\noindent\textbf{V.} \ The case of a parallelogram:

  When \w{\Gamma=ABCD} is a parallelogram, the description of \textbf{IV} should
  be modified as follows: specializing the description of Figure \ref{ffourbar}
  as in Figure \ref{fpfourbar}

%
%
\begin{figure}[ht]
\centering{\includegraphics[width=0.8\textwidth]{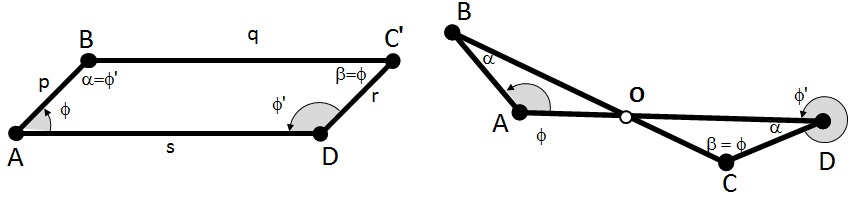}}
\caption{Parameterizing a parallelogram} \label{fpfourbar}
\end{figure}
we now a \wwd{C\sb{2}\sp{(1)}\times C\sb{2}\sp{(2)}}symmetry generated by two graph
automorphisms: the first \wwd{C\sb{2}}action is given by \w{A\leftrightarrow D} and \w[,]{B\leftrightarrow C} and the second by \w{A\leftrightarrow B} and
\w[.]{C\leftrightarrow D}
It turns out that the first two coincide on the left (convex) configuration of Figure \ref{fpfourbar}, yielding the left hand side of Figure \ref{fpfourbardual}.
On the other hand, the first \wwd{C\sb{2}}action on the right (non-convex) configuration in
Figure \ref{fpfourbar} yields the upper right hand in Figure \ref{fpfourbardual}, while the
second yields the lower right hand quadrilateral there.

%
%
\begin{figure}[ht]
\centering{\includegraphics[width=0.8\textwidth]{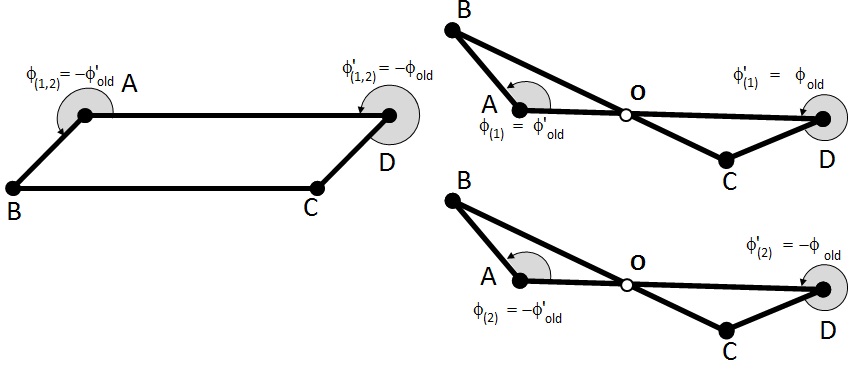}}
\caption{Action of \ww{C\sb{2}\times C\sb{2}} on Figure \protect{\ref{fpfourbar}}}
\label{fpfourbardual}
\end{figure}

\begin{lemma}\label{lisoctra}
  If in the notation of \S \ref{nisoctrap} \w{s=q> p=r} (a parallelogram),
  then \w{\hCG} is a union
  of four arcs $x$, $y$, $z$, and $w$ with ends glued at $G$ and $H$, respectively,
  as depicted in Figure \ref{fparallelogram}. The first \wwd{C\sb{2}}action sends $z$
  antipodally to $w$, while the second
  \wwd{C\sb{2}}action reflects the left half \w{x\sb{1}} of $x$ to the right half \w{x\sb{2}}
  (with a fixed point at their common end $J$), and reflects the left half \w{y\sb{1}} of $y$
  to the right half \w{y\sb{2}}
  (with a fixed point at their common end $K$), thus identifying $G$ with $H$.
  As a result, \w{\hSG} is a wedge of the circle \w{w\sim z} and a segment.
\end{lemma}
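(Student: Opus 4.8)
The plan is to realize $\hCG$ concretely as the union of the two one–parameter families of a parallelogram four–bar linkage — the genuine parallelogram motions and the crossed (anti-parallelogram) motions — and then to push the already–determined $C_2\times C_2$ action down to the quotient one branch at a time. First I would set up $\hCG$ exactly by modifying the construction of case IV (Lemma \ref{lisoctrapd}), following the four–bar analysis of \cite{MTrinG} and \cite{FarbT}: fixing the long bar $DA$ and tracking $B$ on the circle of radius $p$ about $A$, each admissible $B$ yields two positions of $C$ (elbow up/down), and these coincide precisely at the two flat configurations where $A,B,C,D$ become collinear. Those two collinear configurations are the vertices $G$ and $H$; they split each motion branch into two arcs, and calling the parallelogram arcs $z,w$ and the crossed arcs $x,y$ recovers the four-arcs-joined-at-$G,H$ picture of Figure \ref{fparallelogram}.

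Next I would record the action, reading it off Figures \ref{fpfourbar} and \ref{fpfourbardual}. The single structural fact that drives everything is that a parallelogram is centrally symmetric, so the diagonal automorphism $\rho_3=(A\,C)(B\,D)$ (the product of the two reflections) is realized on every parallelogram configuration by an orientation-preserving $180^\circ$ rotation, and hence acts trivially on the branch $z\cup w$. Consequently the two remaining nontrivial elements both act on $z\cup w$ as the antipodal map $\phi\mapsto\phi+\pi$ in the parameter $\phi=\angle BAD$, which is exactly the assertion that $z$ is carried antipodally onto $w$. On the crossed branch $x\cup y$ the same $\rho_3$ acts nontrivially, as the reflection $\phi\mapsto-\phi$ fixing $G$ and $H$, while the reflecting generator fixes the two symmetric crossed configurations $J\in x$ and $K\in y$ and swaps $G$ with $H$; together these two reflections generate the full Klein four-group acting as a dihedral group on the circle $x\cup y$.

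With the action in hand I would compute $\hSG=\hCG/(C_2\times C_2)$ branch by branch. On $z\cup w$ the group acts through the single antipodal involution, so the quotient is again a circle — the circle $w\sim z$ of the statement — with $G$ and $H$ collapsing to one point. On $x\cup y$ the group acts as the order-four dihedral group, whose fundamental domain is a quarter circle, so its quotient is a closed segment whose two endpoints are the image of the fold-fixed point $J\sim K$ and the image of $\{G,H\}$. Finally I would glue: the two branches meet only along $\{G,H\}$, and in both quotients those points map to a single point, so the circle and the segment are joined at exactly one point, giving the wedge of a circle and a segment.

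The main obstacle is the bookkeeping at the flat configurations $G,H$, where the action is not free and the parameter $\phi$ degenerates (the two elbow positions merge and the normalization $N$ is discontinuous). I would handle these points by hand, in explicit collinear coordinates rather than in $\phi$, checking that $G$ and $H$ are genuinely identified in the quotient — so that the segment attaches to the circle at a single point and not two — and that no further identification is forced between the interiors of the two branches. The remaining verifications (that the reflecting generator fixes exactly $J,K$, that $\rho_3$ fixes exactly $G,H$ on $x\cup y$, and that $\rho_3$ is the central rotation) are the routine geometric checks underlying Figure \ref{fpfourbardual}.
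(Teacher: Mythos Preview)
Your proposal is correct and follows essentially the same route as the paper: both arguments obtain the four-arc description of $\hCG$ by specializing the four-bar analysis of the earlier isosceles lemmas (here $\zeta$ becomes the full circle $\gamma_B$, with $C'=C''$ exactly at the two collinear configurations $G,H$), and both read off the $C_2\times C_2$-action from the discussion around Figures~\ref{fpfourbar}--\ref{fpfourbardual}. Your explicit use of the diagonal element $\rho_3=(A\,C)(B\,D)$ --- trivial on the parallelogram branch by central symmetry, and realized by the anti-parallelogram's mirror on the crossed branch --- is a clean way to organize what the paper records more tersely as ``the first two coincide on the convex configuration,'' and it makes the branch-by-branch quotient computation transparent.
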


%
%
\begin{figure}[ht]
\centering{\includegraphics[width=0.3\textwidth]{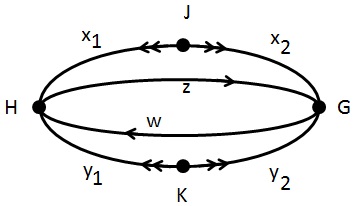}}
\caption{Configuration space for the parallelogram of case (v)}
\label{fparallelogram}
\end{figure}

\begin{proof}
  In Figure \ref{fisotrapi} we see that $\zeta$ equals the whole circle \w[,]{\gamma\sb{B}}
  and at the two points of intersection of $\zeta$ with the $x$-axis (corresponding to $G$ and
  $H$ in Figure \ref{fparallelogram}) we have \w[,]{C'=C''} which yields the description of
  \w[.]{\hCG} The actions of the two cyclic groups \w{C\sb{2}\sp{(1)}} and \w{C\sb{2}\sp{(2)}}
  follow from the description in Case \textbf{V}.
\end{proof}

\begin{lemma}\label{lrhombus}
  If in the notation of \S \ref{nisoctrap} \w{s=p>q=r} (a deltoid) then \w{\hCG} is a union
  of four arcs $x$, $y$, $z$, and $w$ with ends glued at $G$ and $H$, respectively,
  as depicted in Figure \ref{fdeltoid}, with the \wwd{\AG=C\sb{2}}action fixing the arcs
  $x$ and $y$ pointwise and reflecting the arc $z$ and $w$ to each other in the diameter
  \w[.]{GH} Thus \w{\hSG} consists of three arcs with left endpoints glued at $H$ and the
  right endpoints glued at $G$.
\end{lemma}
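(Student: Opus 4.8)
The plan is to exploit the fact that the reflection $\rho\in\AG$ generating $C\sb{2}$ fixes both vertices $A$ and $C$ (interchanging $B\leftrightarrow D$), so that normalizing $\hCG$ by placing $A$ at the origin and $C$ on the positive $x$-axis makes the whole $\AG$-action transparent. First I would check this is a legitimate reduction: since $s=p>q$ forces the diagonal $f:=d(A,C)$ into the range $p-q\le f\le p+q$, with $f\ge p-q>0$, the points $A$ and $C$ never collide, so fixing $A$ and the ray $\overrightarrow{AC}$ uses up exactly the translations and the rotation and identifies $\hCG$ with the space of such normalized configurations (reflections lying outside $\Eucp{2}$ are \emph{not} quotiented, which is why mirror-image configurations stay distinct). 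Under this normalization both $B$ and $D$ are constrained to the same two-point set $\sigma(f):=\{P\,|\,d(A,P)=p,\ d(C,P)=q\}$, using $AB=DA=p$ and $BC=CD=q$; for $f\in(p-q,p+q)$ this is a pair $P\sb{+}(f),P\sb{-}(f)$ mirror-symmetric in the $x$-axis, degenerating to a single point $P\sb{0}$ at each endpoint $f=p-q$ and $f=p+q$.

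The crucial observation is that $B$ and $D$ are \emph{not} adjacent in the $4$-cycle (whose edges are $AB,BC,CD,DA$), so the defining equations of $\CG=\lambda\sp{-1}(\vel)$ impose no constraint $d(B,D)=\text{const}$, and $B=D$ is permitted. Hence all four assignments $(B,D)\in\{P\sb{+},P\sb{-}\}\sp{2}$ give genuine configurations, producing four arcs as $f$ sweeps $(p-q,p+q)$: the two \emph{collapsed} arcs $x=\{(P\sb{+},P\sb{+})\}$ and $y=\{(P\sb{-},P\sb{-})\}$, on which $B=D$, together with the two kite arcs $z=\{(P\sb{+},P\sb{-})\}$ and $w=\{(P\sb{-},P\sb{+})\}$. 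At the endpoints $P\sb{+}=P\sb{-}=P\sb{0}$, so all four arcs converge to the single configuration $B=D=P\sb{0}$; naming these limiting configurations $G$ (at $f=p+q$) and $H$ (at $f=p-q$) yields exactly the four-arc picture of Figure \ref{fdeltoid}. (As a consistency check, this is the same space, up to the canonical isomorphism of Remark \ref{rorder}, as the parallelogram of Lemma \ref{lisoctra}, since both have length multiset $\{p,p,q,q\}$.)

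Finally I would read off the action. Because $\rho$ fixes $A$ and $C$ along with their positions, the relabelled configuration $\hx\circ\rho$ is already normalized, so $N(\hx\circ\rho)$ is simply the interchange of the \emph{positions} of $B$ and $D$. Thus $x=(P\sb{+},P\sb{+})$ and $y=(P\sb{-},P\sb{-})$ are fixed \emph{pointwise}, while $z=(P\sb{+},P\sb{-})$ and $w=(P\sb{-},P\sb{+})$ are interchanged; since this interchange carries the point set $\{P\sb{+},P\sb{-}\}$ to itself it induces on $\hCG$ the same map as reflection in the axis $AC$, i.e.\ in the diameter $GH$. Passing to $\hSG=\hCG/C\sb{2}$ therefore keeps $x$ and $y$ as arcs, identifies $z$ with $w$ into one arc, and preserves the fixed endpoints $G,H$, exhibiting $\hSG$ as three arcs sharing the left endpoint $H$ and the right endpoint $G$, as asserted. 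The one genuinely delicate point—and the step I would flag as the main obstacle—is the non-adjacency of $B$ and $D$: if one (wrongly) forbids $B=D$, the collapsed arcs $x,y$ disappear, $\hCG$ shrinks to a single circle, and one misses precisely the two pointwise-fixed arcs that make $\hSG$ a three-arc figure rather than an interval.
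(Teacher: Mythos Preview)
Your proof is correct and takes a genuinely different route from the paper's. The paper normalizes along the edge $AD$ (following the template of Lemma~\ref{lisoctrap}): it lets $B$ range over an arc $\zeta\subset\gamma\sb{B}$ with $C$ determined up to elbow-up/elbow-down, obtaining $z$ and $w$ this way, and then observes separately that the special point $B=D\in\zeta$ (available precisely because $s=p$) opens up an extra circle of positions for $C$, yielding $x$ and $y$. You instead normalize along the diagonal $AC$, which is the axis of the $C\sb{2}$-symmetry, so that $B$ and $D$ satisfy identical constraints and the automorphism acts simply by swapping their positions. This makes the $\AG$-action transparent from the outset (the paper is quite terse on exactly this point) and produces all four arcs uniformly as the four choices in $\{P\sb{+},P\sb{-}\}\sp{2}$, rather than as two separate phenomena. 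The trade-off is that you must check $A\neq C$ to legitimize the normalization, which you do via $f\geq p-q>0$; the paper's edge-based normalization avoids that check but must instead spot the $B=D$ degeneration by hand. One small imprecision: your sentence identifying the $B\leftrightarrow D$ swap with ``reflection in the axis $AC$'' is only literally correct on $z\cup w$ (geometric reflection in $AC$ would also exchange $x$ and $y$), but since the lemma's claim about the diameter $GH$ concerns only $z$ and $w$, this does not affect the argument.
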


%
%
\begin{figure}[ht]
\centering{\includegraphics[width=0.3\textwidth]{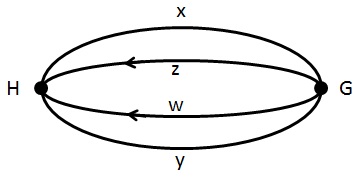}}
\caption{Configuration space for the parallelogram of case (v)}
\label{fdeltoid}
\end{figure}

\begin{proof}
The analysis of Lemma \ref{lisoctrap} shows that we have an arc $\zeta$ of the circle
\w{\gamma\sb{B}} with a single corresponding point \w{C'=C''} on \w{\gamma\sb{C}} for its two
endpoints \w{B\sp{+}} and \w[,]{B\sp{-}} and otherwise two distinct values.  These yield the
two arcs $w$ and $z$ with the \wwd{C\sb{2}}action as described.
However, the fact that \w{s=p} implies that $\zeta$ passes through $D$, at which point the
edges \w{AB} and \w{AD} coincide, so \w{BC} and \w{CD} coincide, too, and this common edge
is free to rotate about $D$, yielding the arcs $x$ and $y$.
\end{proof}

\noindent\textbf{VI.} \ The square:

Recall that if \w{\Gamma=\Gen{4}} is an equilateral quadrilateral, the automorphism group
\w{\AG} is the dihedral group \w{D\sb{4}} generated by the rotation $R$ (given by
\w[,]{A\mapsto B} \w[,]{B\mapsto C} \w[,]{C\mapsto D} and \w[),]{D\mapsto A} of order $4$,
and the reflection $T$ (given by \w{A\leftrightarrow C} with $B$ and $D$ fixed).

\begin{lemma}\label{lsquare}
  In the equilateral case \wb[,]{s=p=q=r} \w{\hCG} is a union of three circles
  of four arcs $x$, $y$, $z$, and $w$ with ends glued at $G$, $H$, and $L$,
  as depicted in Figure \ref{fsquare}. The reflection $T$ sends $x$ to $y$,
 \w{x'} to \w[,]{y'} $u$ to $v$, and fixes $z$ and $w$ pointwise.
 The rotation $R$ sends $x$ to \w[,]{x'} $y$ to \w[,]{y'} $u$ to $z$, $z$ to $v$, $v$ to $w$
 and $w$ to $y$, and fixes $J$, $K$ and $L$.  Thus \w{\hSG} consists of
  two arcs corresponding to $x$ and $z$, glued at their common endpoint $H$.
\end{lemma}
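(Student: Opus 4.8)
The plan is to first pin down the $1$-complex $\hCG$ drawn in Figure \ref{fsquare}, then compute the induced $\AG$-action, and finally read off the quotient $\hSG = \hCG/\AG$. The starting observation is that the rhombus $\Gen{4}$ is \emph{simultaneously} a parallelogram ($s=q$, $p=r$) and a deltoid ($s=p$, $q=r$), so its reduced \cspace\ must contain both families of arcs already analyzed in Lemmas \ref{lisoctra} and \ref{lrhombus}. Concretely, I would identify the arcs $z$ and $w$ with the two families of genuinely non-degenerate rhombus configurations (the elbow-up and elbow-down positions of $C$ relative to the diagonal $AC$), and the arcs $x,x',y,y'$ together with $u,v$ with the degenerate configurations in which two adjacent edges align, so that a doubled edge becomes free to rotate about a vertex --- exactly the mechanism producing the free arcs in the deltoid case. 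The distinguished vertices $G,H,L,J,K$ are then the finitely many maximally collapsed or maximally symmetric configurations (the doubled segment, the flattened square, and so on) at which these arcs meet. Checking that the resulting graph has eight arcs and five vertices, hence first Betti number $8-5+1=4$, confirms that it agrees (up to homeomorphism) with the $S^1\vee S^1\vee S^1\vee S^1$ recorded in \S\ref{socsq}(vi).

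Next I would compute the action of the two generators of $\AG=D_4$ on this graph. Following \S\ref{snor}--\S\ref{ssymnor}, each automorphism acts by relabelling the vertices of a configuration and then renormalizing back to reduced form (re-orienting the chosen edge and recentering the barycenter at the origin), with orientation-reversing relabellings additionally negating all vertex angles. Applying this recipe to the rotation $R$ (given by $A\mapsto B\mapsto C\mapsto D\mapsto A$) and to the reflection $T$ (given by $A\leftrightarrow C$, fixing $B$ and $D$), and tracing the image of each geometric arc, I expect to recover precisely the permutations and fixed sets recorded in the statement: $T$ exchanges $x\leftrightarrow y$, $x'\leftrightarrow y'$, $u\leftrightarrow v$ and fixes $z$ and $w$ pointwise, while $R$ sends $x\mapsto x'$, $y\mapsto y'$ and cyclically moves the remaining arcs $u,z,v,w$, fixing the three configurations $J,K,L$. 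That $T$ fixes $z$ and $w$ pointwise should fall out of the fact that these are exactly the rhombus configurations symmetric under reflection in the diagonal, which is the geometric realization of $T$.

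Finally I would form the quotient $\hSG=\hCG/D_4$. From the computed action the eight arcs fall into two $D_4$-orbits: the arcs $x,x',y,y'$ constitute one orbit, with representative $x$, while $u,z,v,w$ constitute the other, with representative $z$ (the identifications being supplied by $R$, and $T$ fixing $z$ and $w$). Tracking the images of the five vertices under the orbit map then shows that the two surviving arcs $x$ and $z$ become glued at the single common endpoint $H$, their other endpoints remaining free, so that $\hSG$ is two arcs meeting only at $H$ --- that is, a closed interval, in agreement with Theorem \ref{tscsq}(v).

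I expect the main obstacle to be the second step: correctly bookkeeping the interaction of relabelling, cyclic shift, and (for the reflection) angle negation from \S\ref{snor}, and matching the outcome against the explicit arcs of Figure \ref{fsquare}. The delicate points are the degenerate configurations $J,K,L$, where the $\AG$-action is \emph{not} free; there one must check that the incident arcs are genuinely permuted consistently with their endpoints, rather than being folded onto themselves or carried across a vertex, since otherwise the orbit count collapsing $\{x,x',y,y'\}$ and $\{u,z,v,w\}$ to single arcs could fail. Once this combinatorial action is nailed down, the identification of $\hSG$ as two arcs glued at $H$ is routine.
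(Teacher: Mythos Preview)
Your overall strategy --- build the graph $\hCG$, compute the $D_4$-action via relabel-then-normalize, and take the quotient --- is the same as the paper's, which carries it out in the explicit $(\phi,\phi')$ coordinates inherited from Lemma~\ref{lisoctrap}.  There is, however, a genuine conceptual error in your identification of the arcs and in your explanation of why $T$ fixes $z$ and $w$.

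You assert that $z,w$ are the non-degenerate rhombus configurations and that $T$ fixes them because a rhombus is symmetric under reflection in its diagonal.  But $\hCG$ is the quotient only by \emph{orientation-preserving} isometries: a reduced configuration $\hat{x}$ is fixed by $T$ precisely when the relabelling $A\leftrightarrow C$ can be undone by a \emph{rotation}.  For a generic rhombus the only isometry fixing $B,D$ and exchanging $A,C$ is the reflection in the diagonal $BD$, which is orientation-reversing; a direct computation (place $A$ at the origin, $D=(1,0)$, $B=(\cos\phi,\sin\phi)$, apply $T$, renormalize) shows that the $T$-image of the rhombus at angle $\phi$ is the rhombus at angle $-\phi$, its mirror.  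So $T$ does \emph{not} fix the rhombus circle pointwise.  By contrast, the degenerate configurations with $C=A$ (which is exactly what $\phi'=0$ encodes) are literally unchanged by the relabelling $A\leftrightarrow C$, hence fixed by $T$ already in $\CG$.  Accordingly the paper places the parallelogram configurations on the four-arc circle $xx'yy'$ (on which $T$ acts nontrivially) and the two degenerate families on the circles $zw$ and $uv$ --- the reverse of your assignment.  Had you followed your heuristic you would have declared the wrong arcs $T$-fixed, and the orbit bookkeeping leading to $\hSG$ would not have come out as stated.  Once this point is corrected, your orbit count and the identification of $\hSG$ as two arcs meeting at $H$ go through as you outlined.
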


%
%
\begin{figure}[ht]
\centering{\includegraphics[width=0.3\textwidth]{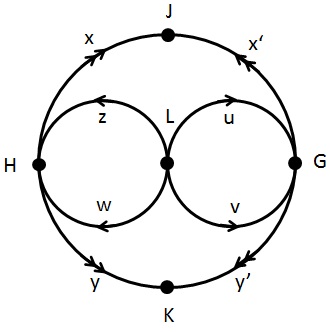}}
\caption{Configuration space for the rhombus}
\label{fsquare}
\end{figure}

\begin{proof}
The analysis of Lemma \ref{lisoctrap} shows that we have two
circles \w{\gamma\sb{B}} and \w{\gamma\sb{C}} with the same radius
$\ell$ about $A$ and $D$ respectively, with \w[.]{|AD|=\ell} To a
point $B$ on \w{\gamma\sb{B}} (with \w{\angle DAB=\phi} as
parameter) there correspond two points on \w[:]{\gamma\sb{C}} one
being $A$ itself (so forming a degenerate quadrilateral) and the
other, $C$, forming a parallelogram, so \w{\angle ADC=\phi'}
satisfies \w[).]{\phi+\phi'=\pi} The parallelogram case
corresponds to the circle \w{xx'yy'} in Figure \ref{fsquare}, with
$H$ at \w{(\phi,\phi')=(\pi,0)} and $G$ at
\w[).]{(\phi,\phi')=(0,\pi)} The degenerate case with \w{\phi'=0}
corresponds to the circle \w[,]{zw} with $L$ at
\w[,]{(\phi,\phi')=(0,0)} while the case \w{\phi=0} corresponds to
the circle \w[.]{uv}

The reflection $T$ takes \w{(\phi,\phi')} to \w[,]{(-\phi',-\phi)}
while the rotation $R$ takes \w{(\phi,\phi')} to
\w[,]{(\phi',\phi)} unless \w{\phi'=0} in which case
\w[.]{(\phi,0)\mapsto(0,-\phi)} Note that the two rules are
consistent at $H$ and $L$.
\end{proof}

%
%

\end{document}